\tikzset{->-/.style={decoration={  markings,  mark=at position #1 with
    {\arrow{>}}},postaction={decorate}}}
\tikzset{-<-/.style={decoration={  markings,  mark=at position #1 with
    {\arrow{<}}},postaction={decorate}}}
\newcommand{\ind}{\operatorname{Ind}}
\newcommand{\oK}{\operatorname{K}}
\newcommand{\Hom}{\operatorname{Hom}}
\newcommand{\per}{\operatorname{per}}
\newcommand{\oZ}{\operatorname{Z}}
\newcommand{\HF}{\sqrt{\varrho}}
\newcommand{\HR}{\sqrt{\varrho}}
\newcommand{\CHR}{\sqrt{\varrho}}
\newcommand{\AHR}{\sqrt{\varrho}^{-1}}
\newcommand{\Cone}{\operatorname{Cone}}
\newcommand{\id}{\operatorname{id}}
\newcommand{\thick}{\operatorname{thick}}
\newcommand{\op}{\operatorname{op}}
\newcommand{\oX}{\operatorname{X}}
\newcommand{\oY}{\operatorname{Y}}
\newcommand{\OC}{\widetilde{\operatorname{OA}}(\mfS^{\lambda})}
\newcommand{\CC}{\widetilde{\operatorname{CA}}(\mfS^{\lambda})}
\newcommand{\tsigma}{\tilde{\sigma}}
\newcommand{\tgamma}{\tilde{\gamma}}
\newcommand{\tiota}{\tilde{\eta}}
\newcommand{\talpha}{\tilde{\gamma}}
\newcommand{\tbeta}{\tilde{\eta}}
\newcommand{\teta}{\tilde{\eta}}
\newcommand{\mfS}{\mathbf{S}}
\newcommand{\mfM}{\mathbf{M}}
\newcommand{\mfY}{\mathbf{Y}}
\newcommand{\mfU}{\mathbf{A}}
\newcommand{\mfV}{\mathbf{A^*}}
\newcommand{\mfp}{\mathbf{p}}
\newcommand{\mfA}{\mathbf{A}}
\newcommand{\mfR}{\mathbf{R}}
\newcommand{\mcH}{\mathcal{H}}
\newcommand{\dfd}{\operatorname{pvd}}
\newcommand{\pvd}{\operatorname{pvd}}
\newcommand{\mcC}{\mathcal{C}}
\newcommand{\mcD}{\mathcal{D}}
\newcommand{\mcK}{\mathrm{K}}
\newcommand{\mcHom}{\mathcal{H}om}
\newcommand{\mbZ}{\mathbb{Z}}
\newcommand{\mbP}{\mathbb{P}}
\newcommand{\mbD}{\mathbb{D}}
\def\bpt{node[blue]{$\bullet$}}
\def\rpt{node[white]{$\bullet$}node[red]{$\circ$}}
\newcommand{\sS}{\tilde{S}}
\newcommand{\rl}{\alpha}
\newcommand{\G}{\textcolor{Green}}
\newcommand{\Gid}{\textcolor{Green}{\id}}
\newcommand{\Po}{p}
\def\emb{\iota}
\def\pro{\pi}
\theoremstyle{plain}
\newtheorem{theorem}{Theorem}[section]
\newtheorem{lemma}[theorem]{Lemma}
\newtheorem{corollary}[theorem]{Corollary}
\newtheorem{proposition}[theorem]{Proposition}
\theoremstyle{definition}
\newtheorem{definition}[theorem]{Definition}
\newtheorem{example}[theorem]{Example}
\newtheorem{remark}[theorem]{Remark}
\newtheorem{notations}[theorem]{Notations}
\numberwithin{equation}{section}
\newtheorem{definition-proposition}[theorem]{Definition-Proposition}
\newtheorem{construction}[theorem]{Construction}
\newtheorem{convention}[theorem]{Convention}
\newcommand{\poly}[2]{\mbD^{#1}_{#2}}
\newcommand{\ver}[2]{V^{#1}_{#2}}
\newcommand{\wer}[2]{W^{#1}_{#2}}
\newcommand{\Rho}[2]{\rho^{#1}_{#2}}
\newcommand{\Zeta}[2]{\zeta^{#1}_{#2}}
\newcommand{\M}[2]{M^{#1}_{#2}}
\newcommand{\paths}[2]{p^{#1}_{#2}}
\newcommand{\K}[2]{k^{#1}_{#2}}
\def\k{\mathbb{K}}
\newcommand{\rest}[1]{\operatorname{Rest}(#1)}
\def\Int{\operatorname{Int}}
\begin{document}

\title[A geometric realization of Koszul duality]
{A geometric realization of Koszul duality for graded gentle algebras}

\date{\today}

\author{Zixu Li}
\address{Lizx: Department of Mathematical Sciences,
	Tsinghua University,
    100084 Beijing,
    China}
\email{lizx19@mails.tsinghua.edu.cn}

\author{Yu Qiu}
\address{Qy:
	Yau Mathematical Sciences Center and Department of Mathematical Sciences,
	Tsinghua University,
    100084 Beijing,
    China.
    \&
    Beijing Institute of Mathematical Sciences and Applications, Yanqi Lake, Beijing, China}
\email{yu.qiu@bath.edu}

\author{Yu Zhou}
\address{Zy:
	Yau Mathematical Sciences Center,
	Tsinghua University,
    100084 Beijing,
    China}
\email{yuzhoumath@gmail.com}

\begin{abstract}
%Let $\Lambda$ be a homologically smooth graded gentle algebra, $\Lambda^!$ its Koszul duality, and $\mcK_{\Lambda}$ the Koszul functor. We show that in a geometric model of $\Lambda$, $\mcK_{\Lambda}$ can be realized as the (anti-clockwise) half rotation.
We show that the Koszul functor of a homologically smooth graded gentle algebra can be realized as the half rotation in a geometric model. As a byproduct, we prove an intersection-dim formula involving the Koszul functor.
\end{abstract}

\keywords{Geometric model, Koszul duality, graded gentle algebras}
\maketitle

%\tableofcontents

\section*{Introduction}

Geometric models play an important role in representation theory of algebras. 
%, and we concern about the geometric models for certain categories such as cluster categories, module categories and derived categories.
The perfect derived categories of graded gentle algebras appeared in \cite{HKK} as the topological Fukaya categories of graded marked surfaces, 
%by Haiden-Katzarkov-Kontsevich, 
where the indecomposable objects correspond to graded curves with local systems, and the space of stability conditions on the topological Fukaya categories is shown to be identified with the modulo space of meromorphic quadratic differentials. Later, it was shown that any gentle algebra is obtained in this way, and using this geometric model, an intersection-dim formula and a complete derived invariant for gentle algebras were given, cf. \cite{LP,OPS,APS19,O}. 
%it was shown in \cite{OPS} and \cite{LP} that the derived categories of finite dimensional gentle algebras and homologically smooth graded gentle algebras are obtained in this way. The formula connecting dimensions of homomorphism spaces between objects and graded intersection numbers was also given. 
Recently, Qiu-Zhang-Zhou \cite{QZZ} generalized the geometric model to the graded skew-gentle case.
For more developments of geometric models of derived categories, we refer to \cite{AB,Am,CS,CS0,CJS,CHS,JSW,PPP18,Q16,Q18,QZ19,IQ2,IQZ}.
%\paragraph{\textbf{Koszul duality}}\

The Koszul duality is a duality phenomenon, which generalizes the duality between the symmetric algebra and the exterior algebra of a vector space. 
By studying the Koszul duality for Koszul rings, Beilinson-Ginzburg-Soergel introduced Koszul duality into representation theory in \cite{BGS96}. Later, Keller generalized the definition of Koszul duality for augmented differential graded categories in \cite{Ke94}. 
%For more advancement, refer  \cite{quadratic algebra,LPWZ08,Posit,HW08,LV}.

In this paper, we study a geometric realization of Koszul duality of homologically smooth graded gentle algebras. 
%(we follow the def:koszul in \cite{Ke94}).
Let $\mfS^{\lambda}$ be a graded marked surface and $\mfA$ a full formal open arc system (see \Cref{def:f.f.a.s.}). There is an associated graded gentle algebra $\Lambda=\Lambda_{\mfA}$ (see \Cref{def:alg from as}). Let $\mfA^*$ be the dual closed arc system of $\mfA$. Then the associated graded gentle algebra $\Lambda_{\mfA^*}$  
%coincides the quadratic duality $(\Lambda_{\mfA})^!$ of $\Lambda_{\mfA}$ (see \Cref{prop:quad dual}), which 
is (quasi-isomorphic to) the Koszul duality of $\Lambda$ (see \Cref{prop:quad dual} and \Cref{lem:quasi koszul}). In the ungraded case, \Cref{prop:quad dual} is known in \cite{OPS}, see \cite{L-FSV} for a similar result about ungraded skew-gentle algebras.

Following \cite{QZZ}, there is an injective map (see \Cref{cons:obj})
$$\oX:\OC \rightarrow \per\Lambda_{\mfA},$$
where $\OC$ is the set of graded open arcs on $\mfS^{\lambda}$ (see \Cref{def:arc}), and $\per\Lambda_{\mfA}$ is the perfect derived category of $\Lambda_{\mfA}$. Dually, there is an injective map
$$\oY:\CC \rightarrow \per\Lambda_{\mfA^*},$$
where $\CC$ is the set of graded closed arcs on $\mfS^{\lambda}$. We introduce an operator $\HR$ on graded arcs, called half rotation (see \Cref{def:half rot}), whose square is the usual rotation.

%which gives 
The main result of the paper is the following geometric realization of the Koszul functor $\mcK_{\mfA}:\per\Lambda_{\mfA}\to\per\Lambda_{\mfA^*}$.

\begin{theorem}[\Cref{thm:int=dim}]
%There is a triangle equivalence $\mcK^{\Lambda_{\mfA}}$ from $\per(\Lambda_{\mfA})^!$ to $\dfd\Lambda_{\mfA}$, such that 
For any graded open arc $\tiota$, there is an isomorphism
$$\mcK^{{\mfA}}(\oY(\tiota)) \cong \oX(\CHR(\tiota)),$$
in $\per\Lambda_{\mfA}$, where $\mcK^{\mfA}$ is a quasi-inverse of $\mcK_{\mfA}$. That is, there is a commutative diagram
\[
\xymatrix@C=3pc{
\CC  \ar[d]_{\CHR} \ar[r]^{\oY}& \per(\Lambda_{\mfA^*})\ar[d]^{\mcK^{{\mfA}}}   \\
\OC \ar[r]_{\oX}&   \per \Lambda_{\mfA}.
}
\]
\end{theorem}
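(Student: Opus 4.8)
The plan is to evaluate both composites in the square explicitly on the graded closed arcs lying in the dual system $\mfA^*$, which generate $\per\Lambda_{\mfA^*}$ as a triangulated category, and then to propagate the identity to an arbitrary $\tiota\in\CC$ using that $\mcK^{\mfA}$ is a triangulated equivalence and that $\oY$ (resp.\ $\oX$) presents the object attached to a graded arc as an iterated mapping cone of the projectives indexed by its intersections with $\mfA^*$ (resp.\ $\mfA$). First I would recall from \Cref{cons:obj} and its closed-arc counterpart the explicit ``string complex'' descriptions: for a graded arc $\gamma$ in minimal position with respect to the relevant arc system, $\oX(\gamma)$ (resp.\ $\oY(\gamma)$) is the twisted complex over $\Lambda_{\mfA}$ (resp.\ $\Lambda_{\mfA^*}$) whose indecomposable summands are the projectives indexed by the intersections of $\gamma$ with $\mfA$ (resp.\ $\mfA^*$), placed in homological degrees dictated by the grading, with differential the sum of the quiver paths read off from the consecutive segments of $\gamma$ inside the polygons of the dissection. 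I would also extract from \Cref{prop:quad dual} and \Cref{lem:quasi koszul} the concrete form of the Koszul functor: $\mcK_{\mfA}$ is quasi-isomorphic to the Koszul functor of the quadratic-dual pair $(\Lambda_{\mfA},\Lambda_{\mfA^*})$, so it carries the indecomposable projectives of $\Lambda_{\mfA}$ to shifts of the simple $\Lambda_{\mfA^*}$-modules, and the quasi-inverse $\mcK^{\mfA}$ is pinned down by the dual recipe; in particular $\mcK^{\mfA}(P^*_j)$ for a projective $P^*_j$ of $\Lambda_{\mfA^*}$ is the explicit perfect complex of projectives over $\Lambda_{\mfA}$ furnished by Koszul duality (finite since $\Lambda_{\mfA}$ is homologically smooth).

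Next I would isolate the geometric core as a local statement about the half rotation of \Cref{def:half rot}: $\CHR$ carries the intersection data of a graded closed arc $\tiota$ with $\mfA^*$ to the intersection data of the graded open arc $\CHR(\tiota)$ with $\mfA$, crossing by crossing and segment by segment, with the homological degrees shifted exactly by the amount that $\mcK_{\mfA}$ introduces. This is precisely where it is used that $\CHR$ is a square root of the usual rotation $\varrho$: the rotation absorbs the full Serre-type shift, and its half is the Koszul grading twist. All the data here are local to a neighbourhood of each arc of $\mfA$, equivalently to each polygon of the dissection, so after cutting $\mfS^{\lambda}$ along $\mfA$ the dictionary reduces to a finite check on small discs with one or two marked points, on which $\mfA$, $\mfA^*$ and $\CHR$ are completely explicit.

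With these ingredients I would first prove the theorem for $\tiota=\mfA^*_j\in\mfA^*$: here $\oY(\mfA^*_j)=P^*_j$, and the issue is to identify $\mcK^{\mfA}(P^*_j)$, which by the first paragraph is an explicit complex of projectives $P_i$ built from the arrows of the quiver of $\Lambda_{\mfA^*}$ out of the vertex $j$, with the string complex $\oX(\CHR(\mfA^*_j))$; the local dictionary of the previous paragraph does exactly this. For a general $\tiota\in\CC$, I would present $\oY(\tiota)$ as an iterated mapping cone of the projectives $P^*_j$ indexed by its crossings with $\mfA^*$, glued by the morphisms coming from its segments, apply the triangulated functor $\mcK^{\mfA}$, and use the base case to rewrite this as an iterated mapping cone of the pieces $\oX(\CHR(\mfA^*_j))$ glued by the morphisms that the corresponding segments of $\CHR(\tiota)$ prescribe. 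A sequence of Gaussian-elimination moves that cancel the internal differentials of these pieces against the gluing morphisms then collapses the resulting double complex onto the single string complex $\oX(\CHR(\tiota))$. Since all of these identifications are made compatibly with $\oX$, $\oY$ and $\CHR$, this simultaneously yields the isomorphism and the commutativity of the square.

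The main obstacle is the final collapsing step: after applying $\mcK^{\mfA}$ one is faced with a complex of complexes, and establishing that it reduces cleanly, and uniformly in $\tiota$, to the expected string complex requires a precise hold both on $\mcK^{\mfA}(P^*_j)$ as an explicit twisted complex and on the images under $\mcK^{\mfA}$ of the segment-morphisms appearing in $\oY(\tiota)$. The remaining technical points are the bookkeeping of the grading shifts attached to $\CHR$ (and the dependence of $\CHR$ on the foliation realizing the grading) and the endpoint case analysis, namely arcs meeting the boundary versus arcs running between the two colours of marked points. A useful way to organize and cross-check the argument is to first establish the square with $\CHR$ replaced by $\varrho=\CHR^{2}$, where it reduces to the known relation between the rotation and the iterated Koszul (Serre) functor on $\per\Lambda_{\mfA}$, and then to upgrade from $\varrho$ to $\CHR$ via the base case, since a triangulated equivalence that is pinned down on a generating set and intertwines the two square-root operations is unique.
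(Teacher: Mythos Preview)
Your outline shares the right core ingredients with the paper, in particular the base case $\mcK^{\mfA}(P^*_j)\cong\oX(\HR(\tbeta_j))$ (which is \Cref{lem:S}) and the idea of propagating to general arcs via iterated cones. But the paper organizes the argument quite differently, and in a way that dissolves precisely the obstacle you flag.

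The paper does \emph{not} try to compute $\mcK^{\mfA}$ on a general $\oY(\tiota)$ and then collapse a complex of complexes. Instead it works in the forward direction with $\mcK_{\mfA}$, which by its very definition \eqref{eq:koszul} coincides with the equivalence $\Phi_{\mfA'}$ of the independence theorem (\Cref{thm:independence}) for the particular choice $\mfA'=\HR(\mfA^*)$. That theorem says: whenever a full formal open arc system $\mfA'$ is \emph{strongly formal} with respect to $\mfA$ (\Cref{def:str f}), there is an equivalence $\Phi_{\mfA'}:\per\Lambda_{\mfA}\to\per\Lambda_{\mfA'}$ with $\Phi_{\mfA'}(\oX(\tsigma))\cong\oX'(\tsigma)$ for every graded open arc $\tsigma$. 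The paper checks geometrically that $\HR(\mfA^*)$ is strongly formal with respect to $\mfA$ (\Cref{lem:str f}), notes that for this $\mfA'$ one has $\oX'=\oY\circ\sqrt{\varrho}^{-1}$, and then the commutative square is literally the specialization of \eqref{eq:cong} to $\tsigma=\HR(\tbeta)$, followed by applying $\mcK^{\mfA}$. Your collapsing step is absorbed into the proof of \Cref{thm:independence}, where it appears once and for all as the smoothing theorem (\Cref{thm:smoothing}) together with the strongly formal condition, the latter being exactly what guarantees that the segment-morphisms form a \emph{dg} thread and hence that the collapse is legitimate. What this buys over your plan is that one never needs to identify the images under $\mcK^{\mfA}$ of the individual segment-morphisms by hand: working with $\mcK_{\mfA}=\Phi_{\mfA'}$ instead, those images are pinned down for free by \Cref{lem:quasi}, since $q_{\mfA'}(\bar f_{\rl})=f_{\rl}$.

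Your proposed workaround through $\varrho=\HR^2$ does not close the gap: two triangulated autoequivalences that agree on a generating set and whose \emph{squares} agree need not themselves agree, so you cannot upgrade from $\varrho$ to $\HR$ by uniqueness alone. If you want to complete your direct approach, the missing piece is an explicit control of $\mcK^{\mfA}$ on the segment-morphisms of $\oY(\tiota)$, and the cleanest way to obtain it is exactly to pass to $\mcK_{\mfA}$ and invoke the strongly formal condition, which brings you back to the paper's route.
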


We also prove the following intersection-dimension formula.
\begin{theorem}[\Cref{thm:koszul}]
Let $\tsigma$ be a graded open arc and $\tiota$ be a graded closed arc. Then for any $\rho \in \mbZ$, we have
\begin{equation} \notag
\Int^{\rho}(\tsigma,\tiota)=\operatorname{dim}\Hom_{\per\Lambda_{\mfA}}(\oX(\tsigma),\oK^{\mfA}(\oY(\tiota)){[\rho]}),
\end{equation}
where $\Int^{\rho}(\tsigma,\tiota)$ is the number of oriented intersections from $\tsigma$ to $\tiota$ of index $\rho$ (see \Cref{def:int}).
\end{theorem}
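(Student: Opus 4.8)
The plan is to deduce the formula from \Cref{thm:int=dim} together with the intersection--dimension formula for open arcs that underlies the map $\oX$, and thereby reduce everything to a purely geometric identity: half rotation does not change the oriented intersection numbers with a fixed open arc.

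First I would transport the right-hand side along the Koszul functor. By \Cref{thm:int=dim} we have $\oK^{\mfA}(\oY(\tiota))\cong\oX(\CHR(\tiota))$ in $\per\Lambda_{\mfA}$, where $\CHR(\tiota)\in\OC$ is a graded open arc; hence
\[
\dim\Hom_{\per\Lambda_{\mfA}}\bigl(\oX(\tsigma),\oK^{\mfA}(\oY(\tiota))[\rho]\bigr)=\dim\Hom_{\per\Lambda_{\mfA}}\bigl(\oX(\tsigma),\oX(\CHR(\tiota))[\rho]\bigr).
\]
Next I would invoke the intersection--dimension formula for $\per\Lambda_{\mfA}$ (following \cite{QZZ}): for any two graded open arcs $\tsigma,\tgamma$ one has $\dim\Hom_{\per\Lambda_{\mfA}}(\oX(\tsigma),\oX(\tgamma)[\rho])=\Int^{\rho}(\tsigma,\tgamma)$ for all $\rho\in\mbZ$. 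Applying this with $\tgamma=\CHR(\tiota)$ reduces the theorem to the geometric identity
\[
\Int^{\rho}(\tsigma,\tiota)=\Int^{\rho}\bigl(\tsigma,\CHR(\tiota)\bigr)\qquad\text{for all }\rho\in\mbZ .
\]

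The heart of the argument is this last identity, which I would prove by analysing the effect of half rotation near the two endpoints of $\tiota$. The arc $\CHR(\tiota)$ is obtained from $\tiota$ by sliding each of its endpoints off its closed marked point onto an adjacent open marked point, together with the prescribed shift of the grading (see \Cref{def:half rot}); away from small neighbourhoods of the two endpoints, $\CHR(\tiota)$ is isotopic to $\tiota$. I would then check, by a finite case analysis of the local configurations of $\tsigma$, $\tiota$ and the relevant polygons of $\mfA$: (i) each interior intersection of $\tsigma$ with $\tiota$ survives as an interior intersection of $\tsigma$ with $\CHR(\tiota)$ carrying the same index, since the local model computing the index is unchanged; (ii) the isotopy moving the endpoints can be chosen so that the region it sweeps meets $\tsigma$ in a controlled way, so no intersection is created or destroyed away from the endpoints; and (iii) the oriented intersections incident to an endpoint of $\tiota$ — including those counted along the boundary segment between the closed and the open marked point — are matched bijectively, index by index, with the corresponding oriented intersections of $\tsigma$ with $\CHR(\tiota)$, the grading shift built into $\CHR$ accounting precisely for the change in the endpoint/boundary combinatorics. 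Summing over all intersection points yields the claimed equality, and hence the theorem.

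I expect the endpoint bookkeeping in step (iii) to be the main obstacle: one must verify that the index shift in the definition of half rotation exactly compensates the change in how $\tsigma$ meets the surface near a slid endpoint, in each of the finitely many local pictures. This, however, is the same local dictionary that already appears in the proof of \Cref{thm:int=dim}, so once that is in place the present identity should follow with only routine additional checking, the remaining items (i)--(ii) being standard isotopy arguments for graded curves on surfaces.
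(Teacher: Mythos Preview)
Your approach is essentially the same as the paper's. The paper's proof of this statement (which is \Cref{thm:int=dim} in the body) is exactly your three-step reduction: apply the isomorphism $\mcK^{\mfA}(\oY(\tiota))\cong\oX(\HR(\tiota))$ (this is \Cref{thm:koszul} in the body, i.e., equation \eqref{eq:main}), then use the $\Int$--$\dim$ formula \eqref{eq:int=dim} from \cite{QZZ}, and finally invoke the geometric identity $\Int^{\rho}(\tsigma,\tiota)=\Int^{\rho}(\tsigma,\HR(\tiota))$. The paper isolates this last identity as \Cref{lem:int-dim} and records it as a direct observation without a detailed case analysis.

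One small correction to your sketch of that identity: since $\tsigma$ has endpoints in $\mfM$ and $\tiota$ has endpoints in $\mfY$, there are \emph{no} oriented intersections between $\tsigma$ and $\tiota$ on the boundary to begin with; every intersection is interior. Also, by \Cref{def:half rot} the grading of $\HR(\tiota)$ is inherited from $\tiota$, so there is no grading shift to compensate for. The actual content of step~(iii) is therefore the converse direction: when the endpoint of $\tiota$ slides anticlockwise from $Y\in\mfY$ to the adjacent $M\in\mfM$, an interior intersection of $\tsigma$ with $\tiota$ near that corner may turn into a boundary oriented intersection of $\tsigma$ with $\HR(\tiota)$ at $M$ (when $\tsigma$ happens to end at $M$), and one checks the index is preserved. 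With this adjustment your local analysis goes through.
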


The paper is organized as follows. In \Cref{sec:surf and alg}, we recall basic notions and notations on graded marked surfaces and graded gentle algebras. In \Cref{sec:geo der eq}, we review a geometric model for perfect derived categories of graded gentle algebras. We show that this model is compatible with the smoothing of a sequence of arcs along oriented intersections (\Cref{thm:smoothing}), and is compatible with a derived equivalence between graded gentle algebras from different full formal open arc systems (\Cref{thm:independence}). In \Cref{geometric model for koszul duality}, we introduce the clockwise half rotation, which is used to give a geometric realization of the Koszul functor (\Cref{thm:koszul}). As an application, an intersection-dimension formula is also shown (\Cref{thm:int=dim}). 
 \subsection*{Conventions}
In this paper, $\k$ denotes a field. A quiver $Q$ consists of a vertex set $Q_0$, an arrow set $Q_1$, and two maps $s,t:Q_1\to Q_0$ giving the start and terminal of an arrow, respectively. For $a,b\in Q_1$, we denote by $ab$ the path in $Q$ first $a$ then $b$. For any two morphisms $f$ and $g$, the composition $gf=g\circ f$ means for first $f$ then $g$. 
\subsection*{Acknowledgments}
This work is supported by National Key R\&D Program of China (No. 2020YFA0713000) and
National Natural Science Foundation of China (Grant No. 12031007 and     No. 12271279).
%For any equivalence relation on a set $X$ and an element $x\in X$, we denote the equivalence class containing $x$ by $\bar{x}$.

%=========================================================
%=========================================================
\section{Graded marked surfaces and graded gentle algebras}\label{sec:surf and alg}

%\subsection{Graded marked surfaces}

We follow \cite{HKK}. A \emph{graded marked surface} $\mfS^{\lambda}$ is a compact oriented surface $\mfS$ with
\begin{itemize}
    \item a non-empty boundary $\partial \mfS$,
    \item two finite sets $\mfM$ and $\mfY$ of points on $\partial \mfS$ such that each connected component of $\partial \mfS$ contains the same number (at least one) of marked points from $\mfM$ and $\mfY$, and these marked points are arranged alternately, and
    \item a section $\lambda:\mfS \rightarrow \mbP T\mfS$ of the real projectivization $\mbP T\mfS$ of the tangent bundle of $\mfS$.
\end{itemize}
Points in $\mfM$ (resp. $\mfY$) are called \emph{open} (resp. \emph{closed}) \emph{marked} points. From now on, we fix a graded marked surface $\mfS^\lambda$.

%\begin{definition}
%A \emph{graded marked surface} $(\mfS, \lambda)$ is defined as a marked surface $\mfS$ equipped with a grading $\lambda$ on $\mfS$. We also denote 
%\end{definition}

%Let $\mathbb{R}T\mathbf{S}^{\lambda}$ denote the $\mathbb{R}$-bundle over $\mathbf{S}$ constructed by gluing $\mathbb{Z}$-copies of $\mathbb{P}T\mathbf{S}$ cut by the grading $\lambda$. As a result, the grading $\lambda$ determines a $\mathbb{Z}$-covering structure on $\mathbb{R}T\mathbf{S}^{\lambda}$
%\begin{equation}\label{covering map}
%	\mathrm{cov}\colon \mbR T \mfS^{\lambda} \rightarrow \mbP T \mfS,
%\end{equation}
%which sends the $0$ in the fiber $\mbR T_{\Po}\mfS^\lambda\cong\mbR$ to $\lambda({\Po})$ for any point ${\Po}\in\mfS$.

%\begin{definition}
%A \emph{morphism} $f\colon\mfS^{\lambda}\rightarrow\mfS'^{\lambda'}$ between two graded marked surfaces is defined as an orientation-preserving local diffeomorphism $f\colon\mfS\to\mfS'$ that preserves the marked points and satisfies $\lambda=f^*\lambda'$. Additionally, there exists a natural automorphism of $\mfS$, known as the grading \emph{shift} [1], on $\mfS^{\lambda}$ which rotates the grading $\lambda:\mathbf{S}\rightarrow\mathbb{P}T\mathbf{S}$ clockwise by an angle of $\pi$.	
%\end{definition}

%\begin{definition}
A \emph{graded arc} on a graded surface $\mfS^{\lambda}$ is defined as a pair $(\sigma,\tsigma)$, where 
\begin{itemize}
    \item $\sigma:[0,1]\rightarrow\mfS$ is an immersion, and
    \item $\tsigma$ is a family of (homotopy classes of) paths in $\mathbb{P}T_{\sigma(t)}\mfS$ from $\lambda(\sigma(t))$ to $\dot{\sigma}(t)$, varying continuously with $t\in(0,1)$.
\end{itemize}
We simply denote a graded arc $(\sigma,\tsigma)$ by $\tsigma$. The shift $\tsigma[1]$ is a graded arc whose underlying arc is still $\sigma$ and whose grading is the composition of the grading of $\tsigma$ with the clockwise rotation of $\pi$.

\begin{definition}\label{def:oint}
Let $\tsigma_1$ and $\tsigma_2$ be two graded arcs in a minimal position on $\mfS^{\lambda}$. An \emph{oriented intersection} from $\tsigma_1$ to $\tsigma_2$ is a pair $(t_1,t_2)\in[0,1]^2$ such that $\sigma_1(t_1)=\sigma_2(t_2)$ and there is a small arc $\alpha \subseteq \mfS^{\circ}=\mfS \setminus \partial \mfS$ around $\Po=\sigma_1(t_1)=\sigma_2(t_2)$ from a point in $\tsigma_1$ to a point in $\tsigma_2$ clockwise. Sometimes, we use the small arc $\alpha$ to denote the corresponding oriented intersection. Note that we need to identify the vertical angles if the intersection ${\Po} \in \mfS^{\circ}$. See \Cref{fig:oint}.
%We identify the vertical angles if the intersection ${\Po} \in \mfS^{\circ}$, see \Cref{fig:oint}.
\begin{figure}[htpb]
\begin{tikzpicture}[scale=.5]
	\draw[thick, Green,->-=.5,>=stealth](-1,1)to[bend left=30](1,1);
	\draw[thick, Green,-<-=.5,>=stealth](-1,-1)to[bend left=-30](1,-1);
\draw[Green,thick]
(0,-1.9)node{$\alpha$}
(0,01.9)node{$\alpha$};
\draw[thick]
(1,0)node{${\Po}$}
;
	\draw[cyan, thick](2,2)to(-2,-2)(-2,2)to(2,-2);
	\draw[cyan,thick](-3,2)node{$\tsigma_1$}
(2.9,2)node{$\tsigma_2$};
%The second picture
    \begin{scope}[shift={(12,-2)}]
    \fill[fill=gray!10]
(-3,0) rectangle (3,-0.6);
	\draw[thick, Green,->-=.5,>=stealth](-1.7,1.7)to[bend left=30](1.7,1.7);
	\draw[Green, thick]
(-0.1,2.8)node{$\alpha$};
\draw[thick]
(0,-.5)node{${\Po}$}
;
    \draw[thick](3,0)to(-3,0);
	\draw[cyan,thick](4,4)to(0,0) to(-4,4);
	\draw[cyan,thick](-3,2.5)node{$\tsigma_1$}
(3.2,2.5)node{$\tsigma_2$};
    \end{scope}	
    \end{tikzpicture}
\caption{Oriented intersections between graded arcs.}
\label{fig:oint}
\end{figure}
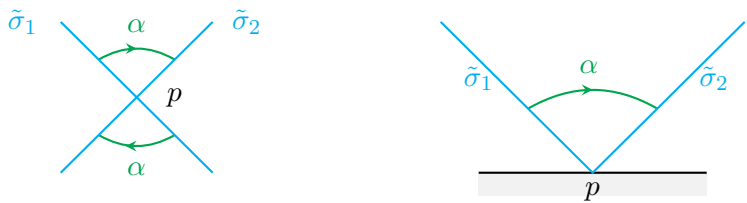

\end{definition}

%Alternatively, a graded arc $\tsigma$ can be seen as a lift of the tangent vectors $\dot{\sigma(t),\ 0\leq t\leq 1$, in the covering map \eqref{covering map} of an arc $\sigma$ on $\mfS$. 

%It is important to note that there are $\mathbb{Z}$-many lifts of $\sigma$, and they are related by shift gradings, that is, $\tsigma[m]=\tsigma(t)+m$ for any $m\in\mathbb{Z}$.
%\end{definition}

%\begin{definition}\label{definitions of intersection index in the interior of the surface}

\begin{definition}\label{def:int}
Let $\rl$ be an oriented intersection from $\tilde{\sigma}_1$ to $\tilde{\sigma}_2$ at $\Po=\sigma_1(t_1)=\sigma_2(t_2)$. 
%Suppose $t_{i} \in [0,1]$ such that
%$$\sigma_1(t_1)=\sigma_2(t_2)=\Po\in \mfS,\ \ \ \ \ \dot{\sigma}_{1}(t_{1}) \neq \dot{\sigma}_{2}(t_{2}) \in  \mbP T \mfS.$$
If $\Po\in\mfS^\circ$, the \emph{intersection index} $\ind_{\Po}(\tsigma_1,\tsigma_2)$ is defined to be
$$\ind_{\Po}(\tsigma_1,\tsigma_2)=\tsigma_{1}(t_1) \cdot \kappa \cdot {\tsigma_{2}}^{-1}(t_2)\in \pi_{1}(\mbP(T_{\Po}\mfS))\cong \mbZ,$$
where $\kappa$ is obtained by a clockwise rotation in $\mathbb{P}(T_{\Po}\mathbf{S})$ by an angle less than $\pi$. 
%Equivalently, the intersection index $\ind_{\Po}(\tsigma_1,\tsigma_2)$ is the integer $i$ such that $\tsigma_2[i]\mid_{\Po}$ is in the interval
%$$(\tsigma_1\mid_{\Po} , \tsigma_1[1]\mid_{\Po} )\subset \mbR T_{\Po}\mfS^{\lambda} \cong \mbR.$$
%\end{definition}
%We have the following proposition.
%\begin{proposition}[\cite{HKK}, Sec. 2.1]\label{intersection index}
%Let $p \in \mfS^{\circ}$ be an intersection between two graded arcs  $\tsigma_1$ and $\tsigma_2$. Then

%\end{proposition}
%The notion of intersection index for two graded arcs can be extended to the case ${\Po}\in\partial\mfS$ as follows. Let $\alpha:[0,1]\rightarrow \mfS$ be an embedded small half circle clockwise around ${\Po}$, such that $\alpha$ intersects $\sigma_1$ and $\sigma_2$ at $\alpha(0)$ and $\alpha(1)$ respectively. 
If $\Po\in\partial\mfS$, fix an arbitrary grading $\tilde{\alpha}$ on the small arc $\alpha$. We define
\[
\mathrm{ind}_{\Po}(\tilde{\sigma}_1,\tilde{\sigma}_2)=\mathrm{ind}_{\alpha(0)}(\tilde{\sigma}_1,\tilde{\alpha})-\mathrm{ind}_{\alpha(1)}(\tilde{\sigma}_2,\tilde{\alpha}).
\]
%\end{definition}
Denote by $\overrightarrow{\cap}^{\rho}(\tsigma_1,\tsigma_2)$ (resp. $\Int^\rho(\tsigma_1,\tsigma_2)$) the set (resp. number) of oriented intersections from $\tsigma_1$ to $\tsigma_2$ with index $\rho$.
\end{definition}

In the case $p\in\mfS^\circ$, we have the following well-known formulas, cf. \cite[Sec.~2.1]{HKK}.
\begin{equation}\label{eq:hkk1}
    \ind_{\Po}(\tsigma_1,\tsigma_2)+\ind_{\Po}(\tsigma_2,\tsigma_1)=1,
\end{equation}
$$\ind_{\Po}(\tsigma_{1}[m],\tsigma_{2}[n])=\ind_{\Po}(\tsigma_1,\tsigma_2)+m-n.$$

%\subsection{Open and closed arc systems}\label{subsec:as}

%For an arc ${\sigma}:I=[0,1]\rightarrow\mfS$ with grading $\tsigma$, we assume that ${\sigma}(t)\in\mfS^\circ$ for any $t\in(0,1)$, and ${\sigma}$ intersects $\partial\mfS$ transversely at the endpoints of $\sigma$.

\begin{definition}\label{def:arc}
A \emph{graded open (resp. closed) arc} $\tilde{\sigma}$ is a graded arc whose endpoints are in $\mfM$ (resp. $\mfY$). The set of (homotopy classes of) graded open (resp. closed) arcs on $\mfS^{\lambda}$ is denoted by $\OC$ (resp. $\CC$).
\end{definition}

Two graded arcs are said \emph{non-crossing} if there is no intersection between them in $\mfS^{\circ}$. A graded arc is called \emph{simple} if it has no self-intersections in $\mfS^\circ$.

\begin{definition}\label{def:f.f.a.s.}
    A \emph{full formal open (resp. closed) arc system} $\mfU$ is a collection of pairwise non-crossing graded simple open arcs, such that $\mfU$ divides $\mfS$ into polygons, called $\mfU$-polygons, each of which contains exactly one closed (resp. open) marked point.
\end{definition}

\begin{example}\label{exm:as}
In \Cref{fig:as}, there is a graded marked surface with three boundary components and genus zero. The blue points are open marked points, and the red points are closed-marked points. There is a full formal open arc system $\mfU$ consisting of 7 blue arcs, and a full formal closed arc system $\mfV$ consisting of 7 red arcs.
\begin{figure}[htpb]\centering
\begin{tikzpicture}[scale=.9]
\draw[ultra thick](0,0) circle (3.5);	
\filldraw
[fill=gray!10,ultra thick](1.5,0) circle (0.8);
\filldraw
[fill=gray!10,ultra thick](-1.5,0) circle (0.8);
%open and closed marked points
%open full formal arc system in blue
\draw[blue,thick]
(-0.7,0)to(0.7,0)
(-0.7,0)to(0,3.5)
(-0.7,0)to(0,-3.5);
\draw[blue,thick]
(-2.3,0)to[out=95,in=-140](0,3.5)
(-2.3,0)to[out=-95,in=140](0,-3.5);
\draw[blue,thick]
(2.3,0)to[out=85,in=-40](0,3.5)
(2.3,0)to[out=-85,in=40](0,-3.5);
\draw
(-1.4,1.8)node[blue]{$\tgamma_1$}
(.1,2.5)node[blue]{$\tgamma_2$}
(1.45,1.8)node[blue]{$\tgamma_3$}
(1.45,-1.8)node[blue]{$\tgamma_4$}
(-0.2,0.25)node[blue]{$\tgamma_5$}
(.1,-2.5)node[blue]{$\tgamma_6$}
(-1.4,-1.8)node[blue]{$\tgamma_7$};
%closed arc system in red
\draw[red,thick]
(-3.42,0.05)to[out=35,in=-185](-1.6,0.8)
(-3.42,0.05)to[out=-35,in=-175](-1.6,-0.8)
(-1.44,0.85)to[out=30,in=140](1.5,0.9)
(-1.44,-0.85)to[out=-30,in=-140](1.5,-0.87);
\draw[red,thick]
(1.43,0.85)to[out=-180,in=90]
(0.3,0)to[out=-90,in=-180]
(1.43,-0.85);
\draw[red,thick]
(1.57,0.84)to[out=0,in=140](3.45,0.1)
(1.57,-0.84)to[out=0,in=-140](3.47,-0.08);
\draw
(-2.8,0.8)node[red]{$\teta_1$}
(0.2,1.65)node[red]{$\teta_2$}
(2.8,0.85)node[red]{$\teta_3$}
(2.8,-0.85)node[red]{$\teta_4$}
(0.3,-0.6)node[red]{$\teta_5$}
(0.2,-1.65)node[red]{$\teta_6$}
(-2.8,-0.8)node[red]{$\teta_7$};
\draw
(0,3.48)\bpt (0,-3.52)\bpt
(2.3,0)\bpt (0.7,0)\bpt
(-2.3,0)\bpt (-0.7,0)\bpt
(-3.5,0)\rpt (3.5,0)\rpt
(1.5,0.8)\rpt (-1.5,0.8)\rpt
(1.5,-0.8)\rpt (-1.5,-0.8)\rpt;
\end{tikzpicture}
 \caption{Full formal open (closed) arc system.} 	
 \label{fig:as}
	\end{figure}
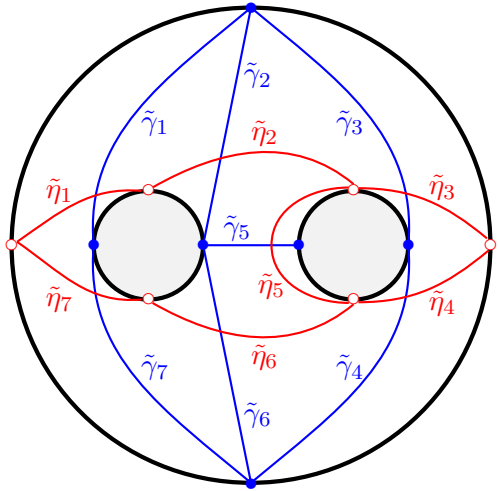
 \end{example}

A graded quiver is a quiver $Q=(Q_0,Q_1,s,t)$, endowed with a map $|\cdot|:Q_1\to\mathbb{Z}$. A relation set $I$ on $Q$ is a set of linear combinations of paths on $Q$. For any graded quiver $Q$ with relation $I$, the algebra $\k Q/\langle I\rangle$ is a graded algebra, where $\k Q$ is the path algebra and $\langle I\rangle$ is the ideal of $\k Q$ generated by $I$.

\begin{definition}\label{def:alg from as}
Let $\mfA= \{\talpha_1, \ldots, \talpha_n\}$ be a full formal open/closed arc system on $\mfS^{\lambda}$. The graded quiver $Q_{\mfU}$ with relation $I_{\mfU}$ associated to $\mfA$ is defined as follows.
\begin{itemize}
	\item The vertex set $(Q_{\mfU})_0=\{1,2,\cdots,n\}$ of $Q_{\mfU}$ indexed by open arcs $\talpha_1,\cdots, \talpha_n$.
	\item There is an arrow $a: i\rightarrow j$ whenever there is an oriented intersection $\alpha$ from $\talpha_j$ and $\talpha_i$ such that $\alpha$ is an inner angle of an $\mfU$-polygon. The degree $|a|=\ind_{\rl}(\talpha_j, \talpha_i)$.
	\item The relation set $I_{\mfU}$ consists of $ab$ whenever $a$ and $b$ are induced by the oriented intersections $\alpha$ and $\beta$ respectively, such that $\alpha$ and $\beta$ are adjacent inner angles of the same $\mfU$-polygon.
\end{itemize}
The graded algebra $\k Q_{\mfU}/\langle I_{\mfU}\rangle$ is denoted by $\Lambda_{\mfU}$.
\end{definition}

%Similarly, a graded quiver $Q_{\mfV}$ can be defined, along with its corresponding relation set $I_{\mfV}$, from a full formal closed arc system $\mfV$.

%The following formula is well-known to compute the number of simple open arcs in a full formal open arc system. Let $|\mfM|$ be the cardinality of $\mfM$, and $b$ be the number of boundary components of $\mfS$. Denote the genus of $\mfS$ by $g$. The number of simple open arcs, denoted by $n$, is given by the formula
%$$n=|\mathbf{M}|+b+2g-2,$$
%see \cite[Prop. 3.12]{APS19}.

\begin{example}\label{exm:gen alg}
In \Cref{fig:as}, we obtain a graded quiver $Q_{\mfU}$ from $\mfU$, and a graded quiver $Q_{\mfV}$ from $\mfV$.
\begin{gather}\notag
	\xymatrix{
	&1 \ar[r]^{a_1} \ar[dd]^{a_8} & 2 \ar[r]^{a_2} & 3 && 1 \ar@{<-}[r]^{a_1^*} \ar@{<-}[dd]^{a_8^*} & 2 \ar@{<-}[r]^{a_2^*} & 3 \\
	Q_{\mfA}:&& 5 \ar[u]^{a_5} & &Q_{\mfV}:& &5 \ar@{<-}[u]^{a_5^*} & \\
&	7  & 6 \ar[l]^{a_7} \ar[u]^{a_6}  & 4, \ar[l]^{a_4} \ar[uu]^{a_3} && 7  & 6 \ar@{<-}[l]^{a_7^*} \ar@{<-}[u]^{a_6^*}  & 4.\ar@{<-}[l]^{a_4^*} \ar@{<-}[uu]^{a_3^*}
	}
\end{gather}
The relation sets $I_{\mfU}=\{ a_4a_6,a_5a_2\}$ and $I_{\mfV}=\{ a_2^*a_1^*,a_5^*a_6^*,a_7^*a_4^*\}$. Hence we obtain two graded algebras $\k Q_{\mfU}/\langle I_{\mfU}\rangle$ and $\k Q_{\mfV}/\langle I_{\mfV}\rangle$.
\end{example}

\begin{remark}\label{rmk:rebuilt}
    Denote by $p_{\rl}$ the arrow in $Q_{\mfU}$ corresponding to the oriented intersection $\rl$, which is an inner angle of an $\mfU$-polygon. By the construction in \Cref{def:alg from as}, for any such oriented intersections $\rl_1,\cdots,\rl_s$, they are composable if and only if the path $p_{\rl_s}\cdots p_{\rl_1}$ is not in the ideal $\langle I_{\mfU}\rangle$. Thus, one can identify the graded algebra $\Lambda_{\mfU}$ as a $\k$-space with the oriented intersections $\rl$ between arcs in $\mfU$ as a basis, the index of $\rl$ as its degree, and the composition of oriented intersections as the multiplication.
    %\rl$such that if $\rl$ is from arc $\tgamma_j$ to $\tgamma_i$ of index $\rho$, then $p_{\rl}$ is a path from $i$ to $j$ of degree $\rho$, and for any oriented intersections $\rl$ and $\rl'$ between arcs in $\mfU$, they are composable if and only if the path 

    %$p_{\rl''}$ corresponding to the composition $\rl''$ of $\rl$ and $\rl'$ is $p_{\rl'}p_{\rl}$.
\end{remark}

For any full formal open arc system $\mfU= \{\talpha_1, \ldots, \talpha_n\}$, there is a unique dual full formal closed arc system $\mfU^\ast=\{\tbeta_{1}, \cdots,\tbeta_{n}\}$, in the sense that
%\begin{definition}\label{dual full formal arc systems}
%Keeping the notations from \Cref{def:alg from as}. 
%A full formal closed arc system $\{\tbeta_{1},\tbeta_{2}, \cdots,\tbeta_{n}\}$ is said to be \emph{dual} to a  provided that the following holds.
\[\Int^{\rho}(\talpha_i,\tbeta_j)=\begin{cases}
    1&\text{if $i=j$ and $\rho=0$,}\\
    0&\text{otherwise.}
\end{cases}\]
See e.g., \cite[Prop. 1.16]{OPS}.
%\begin{enumerate}
%	\item $\overrightarrow{\cap}^{0}(\talpha_i,\beta_j)$
 %The graded simple arcs $\talpha_{i}$ and $\tbeta_{i}$ intersect transversely at a single point $Z_i \in \mfS^{\circ}$ with $\ind_{Z_i}(\talpha_{i},\tbeta_{i})=0$ for $1 \leq i \leq n$.
%	\item  If $i \neq j$, then $\talpha_{i}$ and $\tbeta_{j}$ are disjoint.
%\end{enumerate}
We say that the simple arcs $\talpha_i$ and $\tbeta_i$ are dual to each other (w.r.t. $\mfU$). In \Cref{exm:as}, if we require that $\ind(\talpha_i, \tbeta_i)=0$, then the two full formal closed arc systems there are dual to each other.
%Such a full formal closed arc system is denoted by $\mfA^*$.
%\end{definition}

%\begin{remark}
%In , the authors show that for any given full formal open arc system, there exists a unique full formal closed arc system that is dual to it.	
%\end{remark}

From now on, we fix a full formal open arc system $\mfA= \{\talpha_1, \ldots, \talpha_n\}$ and its dual arc system $\mfV=\{\tbeta_{1}, \cdots,\tbeta_{n}\}$. 
%We denote the graded algebra $\k Q_{\mfU}/\langle I_{\mfU}\rangle$ by $\Lambda_{\mfU}$, and the graded algebra $\k Q_{\mfV}/\langle I_{\mfV}\rangle$ by $\Lambda_{\mfV}$.

\begin{convention}
In the figures,
\begin{enumerate}
	\item open marked points in $\mfM$ and arcs in $\mfA$ are drawn in blue,
	\item other open arcs are drawn in cyan,
    \item closed marked points in $\mfY$ and arcs in $\mfV$ are drawn in red,
    \item other closed arcs are drawn in magenta,
    \item oriented intersections between two graded arcs are drawn in green.
\end{enumerate}
\end{convention}
%Referring to the picture in \Cref{exm:as}, if we require that the intersection indices from simple open arcs to their corresponding dual simple closed arcs are zero, then the open arc system and the closed arc system are dual to each other.

The following lemma is a direct observation.
\begin{lemma}\label{lem:deg}
 Let $M \in \mfM$ be an intersection of $\talpha_{i}$ and $\talpha_{j} \in \mfA$ and let $Y \in \mfY$ be an intersection of $\tbeta_{i}$ and $\tbeta_{j} \in \mfV$, such that the relative position of $\tgamma_i$, $\tgamma_j$, $\teta_i$ and $\teta_j$ is shown as in \Cref{fig:p-s}. Then
$$\ind_{M}(\talpha_{i},\talpha_{j}) + \ind_{Y}(\tbeta_{j},\tbeta_{i}) = 1.$$
 \end{lemma}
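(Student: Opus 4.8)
The plan is to reduce the claimed identity $\ind_M(\talpha_i,\talpha_j)+\ind_Y(\tbeta_j,\tbeta_i)=1$ to the known formula \eqref{eq:hkk1} for interior intersections, by passing through the small arc $\alpha$ that witnesses the boundary intersections and exploiting the duality relation $\Int^{\rho}(\talpha_k,\tbeta_k)=\delta$ already established. First I would set up notation precisely: at $M\in\mfM$ the arcs $\talpha_i,\talpha_j$ share an endpoint, and by \Cref{def:int} for boundary intersections, $\ind_M(\talpha_i,\talpha_j)=\ind_{\alpha(0)}(\talpha_i,\tilde\alpha)-\ind_{\alpha(1)}(\talpha_j,\tilde\alpha)$ for any chosen grading $\tilde\alpha$ of a small arc $\alpha$ near $M$; similarly $\ind_Y(\tbeta_j,\tbeta_i)=\ind_{\beta(0)}(\tbeta_j,\tilde\beta)-\ind_{\beta(1)}(\tbeta_i,\tilde\beta)$ for a small arc $\beta$ near $Y$. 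The configuration in \Cref{fig:p-s} is exactly the local picture in which $\talpha_i$ is dual to $\tbeta_i$ and $\talpha_j$ is dual to $\tbeta_j$, so the grading normalization $\ind(\talpha_k,\tbeta_k)=0$ pins down the relative gradings.

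The key step is to choose the small arcs $\alpha$ and $\beta$ coherently with the dual arcs. Concretely, I would pick the small arc $\alpha$ near $M$ so that it runs along (a portion of) a dual closed arc — say $\tbeta_i$ or $\tbeta_j$ — and likewise pick $\beta$ near $Y$ along one of the open arcs. Because the $\mfA$-polygon containing $M$ and the $\mfV$-polygon containing $Y$ are "dual" faces of the two arc systems (the local picture of \Cref{fig:p-s} being precisely this matched configuration), the four quantities $\ind_{\alpha(0)}(\talpha_i,\tilde\alpha)$, $\ind_{\alpha(1)}(\talpha_j,\tilde\alpha)$, $\ind_{\beta(0)}(\tbeta_j,\tilde\beta)$, $\ind_{\beta(1)}(\tbeta_i,\tilde\beta)$ can all be read off as interior intersection indices between an arc in $\mfA$ and an arc in $\mfV$ at a single interior point, or differ from such by the $\pm m\mp n$ shift rule. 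Then I would repeatedly apply the interior formula $\ind_p(\tilde\sigma_1,\tilde\sigma_2)+\ind_p(\tilde\sigma_2,\tilde\sigma_1)=1$ together with the shift formula $\ind_p(\tilde\sigma_1[m],\tilde\sigma_2[n])=\ind_p(\tilde\sigma_1,\tilde\sigma_2)+m-n$ and the normalization $\Int^0(\talpha_k,\tbeta_k)=1$ to collapse the sum of the eight boundary-index terms (four from $M$, four from $Y$, but each $\ind$ at the boundary is a difference of two interior-type indices) down to a single copy of the constant $1$.

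An alternative, possibly cleaner, route: note that the statement is really about the local geometry near a pair of dual polygons, independent of the global surface, so I would verify it in the universal local model — a disk with two crossing open arcs $\talpha_i,\talpha_j$ and the two transversal dual closed arcs $\tbeta_i,\tbeta_j$, graded so that $\ind(\talpha_k,\tbeta_k)=0$. There one can compute $\ind_M(\talpha_i,\talpha_j)$ and $\ind_Y(\tbeta_j,\tbeta_i)$ directly by tracking winding numbers in $\mbP T\mfS$ against the foliation $\lambda$, and observe the sum telescopes to $1$; a homotopy/continuity argument then transports the local computation to the general case since both sides are homotopy invariants depending only on the displayed local configuration. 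I would present whichever of these is shorter, most likely the reduction via \eqref{eq:hkk1}.

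The main obstacle I anticipate is bookkeeping the orientation conventions: the boundary intersection index is defined as a difference $\ind_{\alpha(0)}(-)-\ind_{\alpha(1)}(-)$, and one must be careful that in $\ind_Y(\tbeta_j,\tbeta_i)$ the roles of $i$ and $j$ are swapped relative to $\ind_M(\talpha_i,\talpha_j)$ — this swap is exactly what turns a difference of two "$\ind+\ind^{\mathrm{op}}=1$" pairs into the single constant $1$ rather than $0$ or $2$. Getting the clockwise-rotation conventions of \Cref{def:oint} and \Cref{def:int} consistent with the chosen grading $\tilde\alpha$ on the small arcs, and making sure the dual-arc normalization is applied at the correct endpoint, is where the proof could go wrong; everything else is a short formal manipulation. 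I expect the final write-up to be only a few lines once the figure's configuration is used to identify each boundary index term with an interior one.
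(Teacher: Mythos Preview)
Your overall direction is sound, and your ``alternative, possibly cleaner, route'' is essentially the paper's proof. The paper observes that the four arcs $\talpha_i,\talpha_j,\tbeta_i,\tbeta_j$ bound a contractible quadrilateral with corners $M,Z_j,Y,Z_i$; since the line field $\lambda$ is homotopic to a constant on this polygon, the sum of the four corner indices is forced, and the duality normalisation $\ind_{Z_i}(\talpha_i,\tbeta_i)=0=\ind_{Z_j}(\talpha_j,\tbeta_j)$ then yields the identity immediately. That is the entire argument --- one sentence.

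Your first approach (unpacking the boundary indices via the small arcs $\alpha,\beta$ of \Cref{def:int}) has a concrete gap: you cannot literally ``pick the small arc $\alpha$ near $M$ so that it runs along a dual closed arc'', because $\tbeta_i$ and $\tbeta_j$ emanate from $Y$, not $M$, and do not enter a neighbourhood of $M$ at all. What you can do is homotope $\alpha$ through the interior of the quadrilateral so that its endpoints slide down $\talpha_i$ and $\talpha_j$ to $Z_i$ and $Z_j$; the boundary index is unchanged under this deformation precisely because the grading on the polygon is homotopically constant. But this is exactly the paper's key observation, so your first route ends up invoking the same fact while adding an extra layer of bookkeeping with \eqref{eq:hkk1} and the shift rule. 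There is no genuine obstruction, but you should recognise that the contractibility of the quadrilateral is doing all the work in either version, and present the argument accordingly.
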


\begin{proof}
This follows directly from  $\ind_{Z_i}(\talpha_{i}, \tbeta_{i})=0=\ind_{Z_j}(\talpha_{j}, \tbeta_{j})$, and that the grading of the polygon cut by $\talpha_{i}$, $\talpha_{j}$, $\tbeta_{i}$ and $\tbeta_{j}$ is (homotopic to) a constant. 

%Any grading of the marked surface can be homotopy to a grading that is locally constant. Therefore, we can  as a constant grading, see \Cref{fig:p-s}. Similarly, the grading of a simple arc within a polygon can also be viewed as a constant foliation. As a result,  yield the desired formula.
\begin{figure}[htpb]
	\begin{tikzpicture}[scale=.6]
	\draw[blue,thick] (-4,-2)to(0,2)\bpt to(4,-2);
	\draw[red,thick] (-4,2)to(0,-2)\rpt to(4,2);
	\draw[blue] (-1.25,.75)node[above]{$\tgamma_i$} (1.25,.75)node[above]{$\tgamma_j$};
	\draw[red] (-1.25,-.75)node[below]{$\teta_i$} (1.25,-.75)node[below]{$\teta_j$};
	\draw[blue]
	 (0,2)node[above]{$M$};\draw[red](0,-2)node[below]{$Y$};
\draw	(-2.3,0)node[left]{$Z_i$} (2.3,0)node[right]{$Z_j$};
	\end{tikzpicture}
	\caption{Intersection indices.}\label{fig:p-s}
\end{figure}
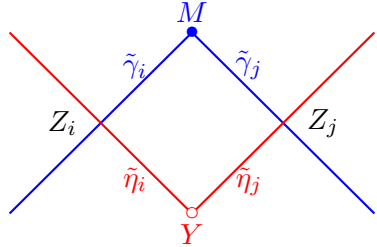

\end{proof}

%\subsection{Graded gentle algebras}

%A graded algebra is called proper if, when forgetting its grading, it is a finite dimensional algebra.

\begin{definition}
A graded quiver $Q$ with relation $I$ is called \emph{gentle} if the following hold.
\begin{itemize}
	\item The relation set $I \subseteq\left\{ab \mid a, b \in Q_{1}, t(a)=s(b)\right\}$.
	\item For every vertex $i \in Q_{0}$, there are at most two arrows entering and at most two arrows leaving $i$.
	\item For every arrow $a \in Q_{1}$, there is at most one arrow $b \in Q_{1}$ (or $b' \in Q_{1}$) such that $ab \in I$ (or $b'a \in I$).
	\item For every arrow $a \in Q_{1}$, there is at most one arrow $b \in Q_{1}$ (or $b' \in Q_{1}$) such that $ab \notin I$ (or $b'a \notin I$).
\end{itemize}
A \emph{graded gentle algebra} is a finite-dimensional graded algebra $\Lambda=\k Q/\langle I\rangle$ for a gentle grade quiver $Q$ with relation $I$.
\end{definition}

%In \Cref{exm:gen alg}, it can be easily verified that $\Lambda_{\mfU}$ and $\Lambda_{\mfV}$ are graded gentle algebras. In general, the authors of \cite{HKK} show that the graded algebra $\Lambda_{\mfA}$ obtained in \Cref{def:alg from as} is always gentle. Conversely, all finite dimensional gentle algebras and all homologically smooth graded gentle algebras can be obtained in this way, see \cite{OPS} and \cite{LP} respevtively. Here homologically smooth means that there is no relation cycle. 

We introduce the quadratic duality for graded gentle algebras.

\begin{definition}\label{def:quad dual}
	Let $\Lambda=\k Q/\langle I \rangle$ be a graded gentle algebra. The \emph{quadratic duality} (see \cite{quadratic algebra,BGS96}) of $\Lambda$ is defined as the graded algebra $\Lambda^{!}=\k Q^{\op}/\langle I^{\perp} \rangle $, where
\begin{itemize}
	\item $Q^{\op}_{0}=Q_0$,
	\item there is an arrow $a^{\ast}:j\to i$ in $Q^{\op}_{1}$ whenever there is an arrow $a:i\to j$ in $Q_{1}$,
	\item the degree of the arrow $a^{\ast}$ is $|a^\ast|=1-|a|$,
	\item $I^{\perp}=\{b^{\ast}a^{\ast}|a,b \in Q_{1},\ t(a)=s(b),\ ab \notin I \}$.
\end{itemize}
\end{definition}
It is not hard to see that the quadratic duality $\Lambda^{!}$ of a gentle algebra $\Lambda$ is also gentle. It is shown in \cite[Prop. 1.25]{OPS} that a gentle algebra and its quadratic duality can be obtained by dual full formal arc systems. Combining this with \Cref{lem:deg}, we can get the following graded version.

\begin{proposition}\label{prop:quad dual}
    Let $\mfS^{\lambda}$ be a graded marked surface,  $\mfU$ a full formal open arc system and $\mfV$ its dual. Then we have isomorphisms of graded algebras
$$(\Lambda_{\mfU})^!\cong \Lambda_{\mfV}\ \text{and}\ 
	(\Lambda_{\mfV})^!\cong \Lambda_{\mfU}.
$$
\end{proposition}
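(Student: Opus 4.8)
The plan is to reduce the statement to a purely combinatorial identification of two graded quivers with relations, using the dictionary in \Cref{rmk:rebuilt} together with the index computation in \Cref{lem:deg}. By \Cref{rmk:rebuilt}, $\Lambda_{\mfU}$ has a $\k$-basis indexed by oriented intersections $\rl$ between arcs of $\mfU$ that are inner angles of $\mfU$-polygons, with degree $\ind_{\rl}$ and multiplication given by composition; likewise for $\Lambda_{\mfV}$. The quadratic dual $(\Lambda_{\mfU})^!$ is, by \Cref{def:quad dual}, the algebra $\k Q_{\mfU}^{\op}/\langle I_{\mfU}^{\perp}\rangle$, which (since $\Lambda_{\mfU}$ is gentle, so that $Q_{\mfU}$ has at most two arrows in/out of each vertex and the relation pattern is forced) is again a gentle algebra whose arrows are in bijection with those of $Q_{\mfU}$, with reversed orientation and degree $1-|a|$. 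So I would first set up an explicit bijection on the level of arrows and then promote it to an algebra isomorphism.

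First I would show that the arrows of $Q_{\mfV}$ correspond bijectively to those of $Q_{\mfU}$ with reversed direction. An arrow $a\colon i\to j$ of $Q_{\mfU}$ comes from an oriented intersection $\rl$ from $\talpha_j$ to $\talpha_i$ at an open marked point $M$ that is an inner angle of a $\mfU$-polygon. Since $\mfV$ is the dual arc system, the local picture around the corresponding vertices is exactly the one in \Cref{fig:p-s}: the arc $\talpha_i$ is ``parallel'' to $\tbeta_i$ and $\talpha_j$ to $\tbeta_j$, and a $\mfU$-polygon corner at $M$ corresponds to a $\mfV$-polygon corner at the closed marked point $Y$. Thus one gets an oriented intersection from $\tbeta_i$ to $\tbeta_j$, i.e.\ an arrow $a^\ast\colon j\to i$ of $Q_{\mfV}$. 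This matches the arrow set of $Q_{\mfU}^{\op}$. For the degrees, \Cref{lem:deg} gives
\[
|a^\ast| \;=\; \ind_{Y}(\tbeta_j,\tbeta_i)\;=\;1-\ind_{M}(\talpha_i,\talpha_j)\;=\;1-|a|,
\]
which is precisely the degree rule in \Cref{def:quad dual}. Next I would check that the relations match: two arrows $a,b$ of $Q_{\mfU}$ with $t(a)=s(b)$ satisfy $ab\notin I_{\mfU}$ exactly when the corresponding corners $\rl,\beta$ are the two non-adjacent corners of a $\mfU$-polygon at a common open arc; under the dual dictionary this is exactly the condition that $a^\ast, b^\ast$ become adjacent corners of a common $\mfV$-polygon, i.e.\ $b^\ast a^\ast \in I_{\mfV}$. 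Hence $I_{\mfU}^{\perp}$ is sent bijectively to $I_{\mfV}$, and the induced $\k$-linear bijection $\k Q_{\mfU}^{\op}/\langle I_{\mfU}^{\perp}\rangle \to \Lambda_{\mfV}$ respects multiplication because composition of oriented intersections is respected on the nose (again by \Cref{rmk:rebuilt}). This proves $(\Lambda_{\mfU})^!\cong\Lambda_{\mfV}$; applying the same argument with the roles of $\mfU$ and $\mfV$ exchanged (noting that $\mfV$ is a full formal closed arc system whose dual is $\mfU$, and that quadratic duality is an involution on gentle algebras) yields $(\Lambda_{\mfV})^!\cong\Lambda_{\mfU}$.

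The main obstacle is the bookkeeping in the second and third steps: one must verify that the duality between $\mfU$-polygons and $\mfV$-polygons is a genuine bijection that interchanges ``adjacent corners'' with ``non-adjacent corners at a shared arc'', and that under this bijection the formal composition of oriented intersections in $\Lambda_{\mfV}$ really is the opposite multiplication of $\Lambda_{\mfU}$ (so that the target is $Q_{\mfU}^{\op}$ and not $Q_{\mfU}$). This is where the precise alternating arrangement of $\mfM$ and $\mfY$ on each boundary component and the dual-arc-system axiom $\Int^{\rho}(\talpha_i,\tbeta_j)=\delta_{ij}\delta_{\rho 0}$ get used. Since the ungraded statement is already established in \cite[Prop.~1.25]{OPS}, the combinatorial bijection of quivers and relations can be quoted from there; the only genuinely new content is the degree identity, which is exactly \Cref{lem:deg}. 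So in the write-up I would invoke \cite[Prop.~1.25]{OPS} for the underlying ungraded isomorphism and then upgrade it to a graded isomorphism using \Cref{lem:deg}, checking degree-compatibility arrow by arrow via \Cref{fig:p-s}.
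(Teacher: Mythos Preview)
Your approach is essentially the same as the paper's: invoke \cite[Prop.~1.25]{OPS} for the ungraded isomorphism of quivers with relations, and then upgrade to the graded setting using the index identity of \Cref{lem:deg}. The paper states exactly this in the sentence preceding \Cref{prop:quad dual} and gives no further details.

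One small point to tighten in your middle paragraph: your characterization of $ab\notin I_{\mfU}$ as ``non-adjacent corners of a $\mfU$-polygon at a common open arc'' is not quite the right picture. A composable pair $ab\notin I_{\mfU}$ corresponds to two oriented intersections $\alpha,\beta$ at the \emph{same} marked point $M\in\mfM$ that compose as angles (cf.\ \Cref{rmk:rebuilt} and \Cref{lem:comp}); under duality this common corner at $M$ becomes a $\mfV$-polygon, and the two angles become adjacent inner angles of that $\mfV$-polygon, giving $b^\ast a^\ast\in I_{\mfV}$. Since you defer this combinatorics to \cite{OPS} anyway, this does not affect the correctness of your write-up.
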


\section{A geometric model for perfect derived categories}\label{sec:geo der eq}
From now on, we regard graded algebras as differential graded algebras with zero differentials.

\subsection{DG algebras and their derived categories}

Let us first review some preliminaries on differential graded(=dg) algebras. We follow \cite{Ke06}. Let $\Lambda$ be a dg $\k$-algebra. When we mention a dg $\Lambda$-module, we always mean a dg right $\Lambda$-module. 

For any two dg $\Lambda$-modules $M$ and $N$, the \emph{morphism
complex} $\mcHom_{\Lambda}(M,N)$ is the graded $\k$-vector space whose $i$-th component $\mcHom_{\Lambda}^i(M,N)$ is the subspace of
the product $\prod_{j\in\mbZ}\Hom_{\k}(M^j,N^{j+i})$ consisting
of morphisms $f$ such that $f(ma)=f(m)a$, for all $m\in M$ and
all $a\in \Lambda$, together with the differential $d$, given by
$$d(f)=f\circ d_M-(-1)^{|f|}d_N\circ f,$$
where $|f|$ is the degree of a homogeneous morphism $f$. Denote by $\mathcal{C}_{dg}(\Lambda)$ the dg category whose objects are dg $\Lambda$-modules, and whose morphism spaces are morphism complexes between dg $\Lambda$-modules. %It is well-known that $\mcC_{dg}(\Lambda)$ is a pretriangulated dg category, cf. \cite{Ke06}.

The category $\mathcal{C}(\Lambda)$ of dg $\Lambda$-modules is the category
whose objects are dg $\Lambda$-modules, and whose morphism spaces $\Hom_{\mathcal{C}(\Lambda)}(M,N)=Z^0\mcHom_{\Lambda}(M,N)$. The morphisms in $\mathcal{C}(\Lambda)$ are called \emph{dg $\Lambda$-module morphisms}. 
%The mapping cone of a dg $\Lambda$-module morphism $f:M\to N$ in $\mcC_{dg}(\Lambda)$ is defined as 
%$$\Cone(f)=(N\oplus M[1],\begin{pmatrix}
%    d_N& f\\  & -d_M
%\end{pmatrix}).$$
The \emph{homotopy category} $\mathcal{H}(\Lambda)$ of dg
$\Lambda$-modules is the category whose
objects are the dg $\Lambda$-modules, and whose morphism spaces $\Hom_{\mathcal{H}(\Lambda)}(M,N)=H^0\mcHom_{\Lambda}(M,N)$. A dg $\Lambda$-module morphism is called a \emph{homotopy equivalence} if it becomes invertible in $\mathcal{H}(\Lambda)$. The category $\mcH(\Lambda)$ is a triangulated category whose shift
functor is the shift of dg $\Lambda$-modules $M \mapsto M[1]$, and we denote this shift functor also by $[1]$. The \emph{derived
category} $\mcD(\Lambda)$ of dg $\Lambda$-modules is the localization of
$\mcH(\Lambda)$ at the full subcategory of acyclic dg $\Lambda$-modules.

A dg $\Lambda$-module $M$ is said to be \emph{$\oK$-projective} if $\mcHom_{\Lambda}(M,N)$ is acyclic for any acyclic dg $\Lambda$-module $N$. If $M$ is $\oK$-projective, then there is a canonical isomorphism for any dg $\Lambda$-module $N$,
\[
\Hom_{\mcD(\Lambda)}(M,N)\cong H^0\mcHom_{\Lambda}(M,N)=\Hom_{\mcH(\Lambda)}(M,N).
\]
By \cite[Sec. 3.1]{Ke94}, for any dg $\Lambda$-module $M$, there is a quasi-isomorphism $\mfp M\to M$ of dg $\Lambda$-modules with $\mfp M$ being $\oK$-projective. We call $\mfp  M$ a
\emph{$\oK$-projective resolution} of $M$.

The \emph{perfect derived category}
$\per \Lambda$ is the smallest full
subcategory of $\mcD(\Lambda)$ containing $\Lambda$ which is stable under
taking shifts, extensions and direct summands. 

%The \emph{perfectly valued derived category} $\dfd\Lambda$ is the full subcategory of $\mcD(\Lambda)$ consisting of those dg $\Lambda$-modules whose total cohomologies are finite dimensional. The category $\pvd\Lambda$ is also called the \emph{finite dimensional derived category}.

Let $\Lambda_1$ and $\Lambda_2$ be two dg algebras and $\varphi: \Lambda_1 \rightarrow \Lambda_2$ a dg algebra homomorphism. The restriction functor 
$$\rest{\varphi}:\mathcal{C}_{dg}(\Lambda_2)\to \mathcal{C}_{dg}(\Lambda_1)$$ 
is a dg functor, giving each dg $\Lambda_2$-module $M$ a dg $\Lambda_1$-module structure $M_{\Lambda_1}$ along $\varphi$. 
%Any such a dg algebra homomorphism, any dg $\Lambda_2$-module $M$ admits a dg $\Lambda_1$-module structure, denote by $M_{\Lambda_1}$ and given by $ma=m\varphi(a)$, for any $m \in M,a \in \Lambda_1.$ Let $f: M \rightarrow N$ be a dg $\Lambda_2$-module morphism. Then $f$ is also a dg $\Lambda_1$-module morphism from $M_{\Lambda_1}$ to $N_{\Lambda_1}$. Hence we get a functor 
%$$\varphi^*:\mcC(\Lambda_2)\to \mcC(\Lambda_1).$$
Since $\rest{\varphi}$ sends acyclic dg $\Lambda_2$-modules to acyclic dg $\Lambda_1$-modules, there is an induced triangle functor 
$$\rest{\varphi}:\mcD(\Lambda_2)\to \mcD(\Lambda_1).$$

A dg algebra homomorphism is called a dg algebra \emph{quasi-isomorphism} if it is a quasi-isomorphism of complexes.

%from $\mcD(\Lambda_2)$ to $\mcD(\Lambda_1)$ which is still denoted by $\varphi^*$.

\begin{theorem}[\cite{Ke94}, Sec. 7.1]\label{thm:quasi-iso}
Let $\varphi: \Lambda_1 \rightarrow \Lambda_2$ be a dg algebra quasi-isomorphism. Then $\varphi:(\Lambda_1)_{\Lambda_1}\to (\Lambda_2)_{\Lambda_1}=\rest{\varphi}((\Lambda_2)_{\Lambda_2})$ is a dg $\Lambda_1$-module quasi-isomorphism. Moreover, the triangle functor $\rest{\varphi}:\mcD(\Lambda_2) \rightarrow \mcD(\Lambda_1)$ is an equivalence, which restricts to a triangle equivalence
%the following two triangle equivalences
$$
\per \Lambda_2 \xrightarrow{\simeq} \per \Lambda_1.
%\text{ and } \dfd\Lambda_2 \xrightarrow{\simeq}\dfd\Lambda_1.
$$

\end{theorem}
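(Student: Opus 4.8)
The first assertion is immediate, and it is essentially the only input I need from the hypotheses. Since $\varphi$ is a morphism of dg algebras, the underlying chain map $\varphi$ is also a morphism of dg right $\Lambda_1$-modules $(\Lambda_1)_{\Lambda_1}\to (\Lambda_2)_{\Lambda_1}=\rest{\varphi}((\Lambda_2)_{\Lambda_2})$: indeed $\varphi(m a)=\varphi(m)\varphi(a)=\varphi(m)\cdot a$ for $m\in\Lambda_1$ and $a\in\Lambda_1$, where $\cdot$ is the $\Lambda_1$-action obtained by restriction along $\varphi$, and $\varphi$ commutes with the differentials; it is a quasi-isomorphism on underlying complexes by hypothesis. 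In particular $\rest{\varphi}(\Lambda_2)\cong \Lambda_1$ in $\mcD(\Lambda_1)$. The plan for the rest is to produce an explicit quasi-inverse to $\rest{\varphi}$ by derived extension of scalars along $\varphi$, and to reduce the verification to the behaviour on the free module $\Lambda_i$, which is a compact generator of $\mcD(\Lambda_i)$.

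So let $\varphi^{*}\colon\mcD(\Lambda_1)\to\mcD(\Lambda_2)$ be the triangle functor $M\mapsto \mfp M\otimes_{\Lambda_1}\Lambda_2$, where $\mfp M$ is a $\oK$-projective resolution of $M$ over $\Lambda_1$. It is standard that $\varphi^{*}$ is well defined on $\mcD$, commutes with arbitrary coproducts (a coproduct of $\oK$-projectives is $\oK$-projective), and is left adjoint to $\rest{\varphi}$, via the module-level tensor--hom adjunction together with $\oK$-projectivity of $\mfp M$. Write $\eta\colon \id\Rightarrow \rest{\varphi}\varphi^{*}$ and $\epsilon\colon \varphi^{*}\rest{\varphi}\Rightarrow\id$ for the unit and counit. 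I would first check these are isomorphisms on the free modules of rank one. Since $\Lambda_1$ is already $\oK$-projective, $\varphi^{*}\Lambda_1=\Lambda_2$ and, up to the canonical identifications, $\eta_{\Lambda_1}$ is exactly the map $\varphi\colon\Lambda_1\to\rest{\varphi}(\Lambda_2)$, an isomorphism in $\mcD(\Lambda_1)$ by the first part. Dually, by the first part the quasi-isomorphism $\varphi\colon\Lambda_1\to(\Lambda_2)_{\Lambda_1}$ may be used as a $\oK$-projective resolution of $\rest{\varphi}(\Lambda_2)$, whence $\varphi^{*}\rest{\varphi}(\Lambda_2)=\Lambda_1\otimes_{\Lambda_1}\Lambda_2=\Lambda_2$ and, tracing through the definition, $\epsilon_{\Lambda_2}$ becomes the identity of $\Lambda_2$.

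Next I would pass from generators to all objects in the usual way. The full subcategory of those $M\in\mcD(\Lambda_1)$ for which $\eta_M$ is invertible is triangulated and closed under coproducts, because $\id$ and $\rest{\varphi}\varphi^{*}$ are triangle functors preserving coproducts; it contains the compact generator $\Lambda_1$, hence equals $\mcD(\Lambda_1)$. The same argument applied to $\Lambda_2\in\mcD(\Lambda_2)$ shows $\epsilon$ is a natural isomorphism, so $\rest{\varphi}$ and $\varphi^{*}$ are mutually quasi-inverse triangle equivalences $\mcD(\Lambda_2)\simeq\mcD(\Lambda_1)$. Finally $\per\Lambda_i=\thick(\Lambda_i)$ by the definition recalled above; since $\rest{\varphi}$ is a triangle functor preserving direct summands with $\rest{\varphi}(\Lambda_2)\cong\Lambda_1$, it sends $\thick(\Lambda_2)$ into $\thick(\Lambda_1)$, and since its quasi-inverse $\varphi^{*}$ has $\varphi^{*}(\Lambda_1)=\Lambda_2$, it sends $\thick(\Lambda_1)$ into $\thick(\Lambda_2)$. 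Combining these two inclusions yields the triangle equivalence $\per\Lambda_2\xrightarrow{\simeq}\per\Lambda_1$.

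The only step that is not purely formal is the passage from generators to all objects — equivalently, full faithfulness of $\rest{\varphi}$. It rests on two standard facts about derived categories of dg algebras: that $\Lambda_i$ is a compact generator of $\mcD(\Lambda_i)$, and that $\varphi^{*}$ is a well-defined coproduct-preserving left adjoint of $\rest{\varphi}$. So in practice the work is bookkeeping with $\oK$-projective resolutions rather than anything deep. One could also avoid coproducts by computing $\Hom_{\mcD(\Lambda_i)}$ directly on $\oK$-projective (resp. perfect) representatives and checking that the comparison map induced by $\rest{\varphi}$ is an isomorphism first on shifts of the free module and then by induction along triangles, but this amounts to the same verification in a different guise.
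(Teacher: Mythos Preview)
Your argument is correct. Note, however, that the paper does not supply its own proof of this statement: it is quoted from \cite{Ke94}, Sec.~7.1, as background. Your proof is the standard one---construct the derived extension-of-scalars $\varphi^{*}=-\otimes^{\mathbf L}_{\Lambda_1}\Lambda_2$ as left adjoint to $\rest{\varphi}$, verify the unit and counit on the compact generators $\Lambda_1$ and $\Lambda_2$, and propagate via thick and localizing subcategories---and is essentially how Keller's argument runs. There is nothing to compare against in the present paper.
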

The following theorem is known as the derived Morita theorem.
\begin{theorem}[\cite{Ke94}, Sec. 7.3]\label{thm:keller}
Let $\Lambda$ be a dg algebra. Let $M \in \mcD(\Lambda)$ and $\mfp M$ a $\oK$-projective resolution of $M$.
%$\Lambda'=\mcHom_{\Lambda}(\mfp M,\mfp M)$.
Then there is a triangle equivalence
$$\mcHom_{\Lambda}(\mfp M,-):\thick_{\mcD(\Lambda)}(M) \xrightarrow{\simeq} \per\mcHom_{\Lambda}(\mfp M,\mfp M),$$
where $\thick_{\mathcal{D}(\Lambda)}(M)$ is the smallest thick subcategory of $\mathcal{D}(\Lambda)$ that contains $M$.
\end{theorem}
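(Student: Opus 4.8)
The plan is to recognize this as the \emph{derived Morita theorem} and prove it through the adjunction between a derived tensor functor and $\mcHom_{\Lambda}(\mfp M,-)$. Write $P=\mfp M$ and let $E=\mcHom_{\Lambda}(P,P)$, which under composition of morphisms is a dg algebra, so that $P$ becomes a dg $E$-$\Lambda$-bimodule (right $\Lambda$-module via its $\Lambda$-structure, left $E$-module by evaluation). First I would observe that, since $P$ is $\oK$-projective, the dg functor $F:=\mcHom_{\Lambda}(P,-)$ carries acyclic dg $\Lambda$-modules to acyclic dg $E$-modules, hence descends to a triangle functor $F\colon\mcD(\Lambda)\to\mcD(E)$ which already computes $\mathbf{R}\mcHom_{\Lambda}(P,-)$, with no resolution of the argument needed. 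On dg categories $F$ has the left adjoint $-\otimes_{E}P$ by tensor--hom adjunction; resolving dg $E$-modules by $\oK$-projectives yields its total left derived functor $G:=-\otimes^{\mathbf{L}}_{E}P\colon\mcD(E)\to\mcD(\Lambda)$, again a triangle functor, and the dg-level adjunction descends to an adjunction $(G,F)$ of triangle functors on the derived categories. All of this is standard, cf. \cite{Ke94,Ke06}.

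The heart of the argument is the computation on the free rank-one module $E$: one has $G(E)=E\otimes^{\mathbf{L}}_{E}P\cong P\cong M$ in $\mcD(\Lambda)$ (the last isomorphism because $\mfp M\to M$ is a quasi-isomorphism), while $F(P)=\mcHom_{\Lambda}(P,P)=E$ as a dg $E$-module, and under these identifications the unit $\eta_{E}\colon E\to FG(E)$ is an isomorphism. Next I would invoke the usual thick-closure yoga: the full subcategory of $\mcD(E)$ consisting of those $X$ for which $\eta_{X}$ is an isomorphism is triangulated and closed under direct summands, because $F$ and $G$ are triangle functors and $\eta$ is natural; since it contains $E$, it contains $\thick_{\mcD(E)}(E)=\per E$. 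Hence $\eta_{X}$ is an isomorphism for every $X\in\per E$, so $G$ is fully faithful on $\per E$, and by the triangle identity $\varepsilon_{G(X)}\circ G(\eta_{X})=\id_{G(X)}$ the counit $\varepsilon_{N}\colon GF(N)\to N$ is then an isomorphism for every $N$ in the essential image $G(\per E)$.

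It remains to identify that essential image. Since $\per E$ is idempotent-complete and $G$ is fully faithful on it, $G(\per E)$ is a thick subcategory of $\mcD(\Lambda)$; it contains $G(E)\cong M$, so $\thick_{\mcD(\Lambda)}(M)\subseteq G(\per E)$, and the reverse inclusion is clear, whence $G(\per E)=\thick_{\mcD(\Lambda)}(M)$. Therefore $G$ restricts to an equivalence $\per E\xrightarrow{\simeq}\thick_{\mcD(\Lambda)}(M)$. Its quasi-inverse is $F$: for $N\in\thick_{\mcD(\Lambda)}(M)=G(\per E)$, say $N\cong G(X)$ with $X\in\per E$, we get $F(N)\cong FG(X)\cong X\in\per E$; thus $F$ maps $\thick_{\mcD(\Lambda)}(M)$ into $\per E$, is essentially surjective there, and is fully faithful on $\thick_{\mcD(\Lambda)}(M)$ because $\varepsilon$ is an isomorphism on it. Being the restriction of a triangle functor, $F$ is a triangle functor, so this yields the asserted triangle equivalence $\mcHom_{\Lambda}(\mfp M,-)\colon\thick_{\mcD(\Lambda)}(M)\xrightarrow{\simeq}\per\,\mcHom_{\Lambda}(\mfp M,\mfp M)$.

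The main obstacle I anticipate is entirely technical rather than conceptual: making precise the total left derived functor $G=-\otimes^{\mathbf{L}}_{E}P$ over the dg algebra $E$ (which requires $\oK$-projective resolutions of dg $E$-modules and a check that $-\otimes_E P$ on such resolutions is well defined up to quasi-isomorphism), and verifying that the dg-level tensor--hom adjunction really descends to an adjunction of triangle functors with $\eta_{E}$ an isomorphism. Everything after that bookkeeping is formal adjunction and thick-subcategory manipulation. Since this is exactly the content of \cite[Sec.~7.3]{Ke94} (relying on the foundations of \cite{Ke94,Ke06}), for the details we simply refer there.
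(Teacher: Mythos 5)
The paper offers no proof of this statement: it is quoted directly from Keller's \emph{Deriving DG categories} (Sec.~7.3), so there is nothing internal to compare against. Your argument is a correct, self-contained rendering of the standard proof of the derived Morita theorem --- set $P=\mfp M$, $E=\mcHom_{\Lambda}(P,P)$, use the adjunction between $-\otimes^{\mathbf{L}}_{E}P$ and $\mcHom_{\Lambda}(P,-)$ (the latter needing no resolution of its argument because $P$ is $\oK$-projective), check the unit on the compact generator $E$, and propagate by thick closure to identify $\per E$ with $\thick_{\mcD(\Lambda)}(M)$ --- which is exactly the content of the cited reference, so deferring the remaining bookkeeping there is appropriate.
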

\begin{remark}
The dg algebra $\mfR\Hom_{\mcD(\Lambda)}(M,M):=\mcHom_{\Lambda}(\mfp M,\mfp M)$ is called the \emph{derived endomorphism algebra} of $M$, which does not depend on the choice of the $\oK$-projective resolution $\mfp M$ of $M$, up to quasi-isomorphism, cf. \cite{Ke06}.
\end{remark}

\subsection{A geometric model for perfect derived categories}\label{sec:geo per}\

Recall that we have fixed a full formal open arc system $\mfU=\{\talpha_1,\cdots,\talpha_n\}$, and its dual $\mfV=\{\tbeta_1,\cdots,\tbeta_n\}$. For simplicity, we denote $\Lambda_{\mfU}$ by $\Lambda$. %For any graded open arc $\tsigma$ on $\mfS^{\lambda}$, it is assumed that $\tsigma$ intersects $\mfV$ in a minimal position.

There is a complete set of orthogonal primitive idempotents of $\Lambda$: $\{e_1,\cdots,e_n\}$, where $e_i$ corresponds to the graded simple open arc $\tgamma_i$. We denote the direct summand $e_i\Lambda$ of $\Lambda$ by $P_i$, which is an indecomposable dg $\Lambda$-module. Then we have $\Lambda=\oplus_{i=1}^nP_i$. For any path $p$ from $i$ to $j$ in $Q_{\mfU}$ such that $p$ is not in the ideal generated by the relation set $I_{\mfU}$, there is an associated non-zero dg $\Lambda$-module morphism from $P_j$ to $P_i[|p|]$, which is denoted by $\phi_p$.

\begin{notations}\label{not:arc}
Let $\tsigma$ be a graded open arc with a fixed orientation. %Suppose that $\tsigma$ intersects $\mfA^*$ at $\ver{\tsigma}{1},\cdots,\ver{\tsigma}{r_{\tsigma}}$ with intersection index $\Rho{\tsigma}{i}$ in order. 
We denote by (cf. \Cref{the sign of delta})
\begin{itemize}
    \item $\tbeta_{\K{\tsigma}{1}},\cdots,\tbeta_{\K{\tsigma}{r_{\tsigma}}}$ the arcs in $\mfU^*$ that $\tsigma$ intersects in order,
    \item $\ver{\tsigma}{i}$ the intersection between $\tsigma$ and  $\tbeta_{\K{\tsigma}{i}}$,
    \item $\Rho{\tsigma}{i}=\ind_{\ver{\tsigma}{i}}(\tsigma,\tbeta_{\K{\tsigma}{i}})$,
 %   \item $\K{\tsigma}{i}$ the vertex in $(Q_{\mfA})_0$ corresponding to the arc in $\mfA$ containing $\ver{\tsigma}{i}$,
    \item $\ver{\tsigma}{0}, \ver{\tsigma}{r_{\tsigma}+1} \in \mfM$ the endpoints of $\tsigma$,
    \item $\tsigma(i,i+1)$ the arc segment between $\ver{\tsigma}{i}$ and $\ver{\tsigma}{i+1}$ for $1 \leq i \leq r_{\tsigma}$,
    \item $\poly{\tsigma}{i}$ the $\mfU^*$-polygon containing the arc segment $\tsigma(i,i+1)$,
    \item $\M{\tsigma}{i}$ the unique open marked point contained in $\poly{\tsigma}{i}$, and
    \item $\paths{\tsigma}{i}$ the path between $\K{\tsigma}{i}$ and $\K{\tsigma}{i+1}$ in $Q_{\mfU}$, corresponding to the segment $\tsigma(i,i+1)$.
\end{itemize}
\end{notations}

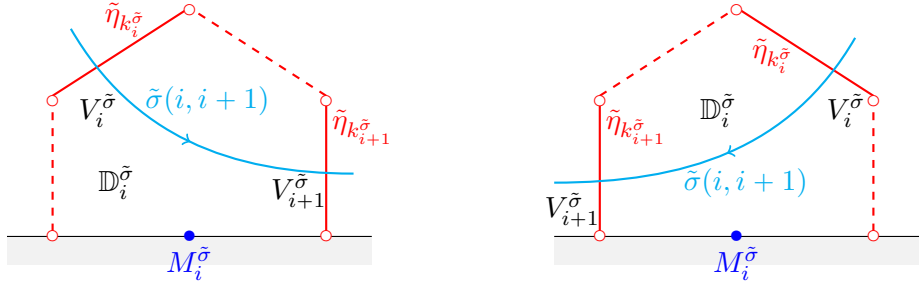
\begin{figure}[htpb]
	\begin{tikzpicture}[scale=1.2]
\draw[thick]
(-4,0) to (0,0);
\draw[dashed,red,thick]
(-3.5,0.06) to (-3.5,1.45);
\draw[red,thick]
(-3.5,1.56) to (-2.05,2.5);
\draw[red,thick]
(-.5,0.06) to (-.5,1.45);
\draw[dashed,red,thick]
(-.5,1.56) to (-1.95,2.5);
\draw[cyan,thick,->-=0.5]
(-3.3,2.3) to
[out=-60,in=180] (-.2,.7);
\draw[thick]
(-3,1.4)node{$\ver{\tsigma}{i}$}
(-.8,0.5)node{$\ver{\tsigma}{i+1}$};
\draw[thick,red]
(-2.7,2.4)node{$\teta_{\K{\tsigma}{i}}$}
(-.1,1.2)node{$\teta_{\K{\tsigma}{i+1}}$};
\draw[thick,cyan]
(-1.8,1.5)node{$\tsigma(i,i+1)$};
\fill[fill=gray!10]
(-4,0) rectangle (-0,-0.3);
\draw[blue]
(-2,0)node[below]{$\M{\tsigma}{i}$};
\draw
(-2.8,.6)node{$\poly{\tsigma}{i}$};
\draw
(-2,0)\bpt
(-.5,0)\rpt
(-3.5,0)\rpt
(-3.5,1.5)\rpt
(-2,2.5)\rpt
(-.5,1.5)\rpt;
%\draw[thick](-2,-.7)node{$\delta(\tsigma(i,i+1))=-1$.};
% the second figure
\begin{scope}[shift={(6,0)}]
\draw[thick]
(-4,0) to (0,0);
\draw[red,thick]
(-3.5,0.06) to (-3.5,1.45);
\draw[dashed,red,thick]
(-3.5,1.56) to (-2.05,2.5);
\draw[dashed,red,thick]
(-.5,0.06) to (-.5,1.45);
\draw[red,thick]
(-.5,1.56) to (-1.95,2.5);
\draw[cyan,thick,->-=0.5]
(-.7,2.2) to [out=-120,in=0] (-4,.6);
\draw[red,thick]
(-3.1,1.2)node{$\teta_{\K{\tsigma}{i+1}}$}
(-1.6,2)node{$\teta_{\K{\tsigma}{i}}$};
\draw[thick]
(-3.8,0.3)node{$\ver{\tsigma}{i+1}$}
(-.8,1.4)node{$\ver{\tsigma}{i}$};
\draw[cyan,thick]
(-1.9,.6)node{$\tsigma(i,i+1)$};
\fill[fill=gray!10]
(-4,0) rectangle (-0,-0.3);
\draw
(-2,0)\bpt
(-.5,0)\rpt
(-3.5,0)\rpt
(-3.5,1.5)\rpt
(-2,2.5)\rpt
(-.5,1.5)\rpt;
\draw[blue]
(-2,0)node[below]{$\M{\tsigma}{i}$};
\draw
(-2.2,1.4)node{$\poly{\tsigma}{i}$};
%\draw[thick](-2,-.7)node{$\delta(\tsigma(i,i+1))=1$.};
\end{scope}
	\end{tikzpicture}
	\caption{Notations for a graded open arc $\tsigma$.}
 %The sign of $\delta(\tsigma(i,i+1))$.}
	\label{the sign of delta}
\end{figure}

Using \Cref{not:arc}, for any graded open arc $\tsigma$, there is an associated sequence of paths in $Q_{\mfU}$
\[
\xymatrix@C=3pc{
  \K{\tsigma}{1}\ar@{-}[r]^{\paths{\tsigma}{1}}&\K{\tsigma}{2}\ar@{-}[r]^{\paths{\tsigma}{2}}&\cdots
  \ar@{-}[r]^{\paths{\tsigma}{r_{\tsigma}-1}}&\K{\tsigma}{r_{\tsigma}}},
  \]
  where each path $\paths{\tsigma}{i}$ is not in the ideal generated by the relation set $I_{\mfU}$.

\begin{construction}\label{cons:obj}

%We define a sign $\delta(\tsigma(i,i+1))$ as follows (cf. \Cref{the sign of delta})
%$$\delta(\tsigma(i,i+1))=\begin{cases}-1 & \text{if $\M{\tsigma}{i}$ is to the right of $\tsigma(i, i+1)$ in $\poly{\tsigma}{i}$,} \\1 & \text{if $\M{\tsigma}{i}$ is to the left of $\tsigma(i,i+1)$ in $\poly{\tsigma}{i}$.}
%\end{cases}$$

A dg $\Lambda$-module $\oX(\tsigma)=(|\oX(\tsigma)|,d_{\tsigma})$ associated to a graded open arc $\tsigma$ is constructed as follows.
\begin{itemize}
	\item The underlying graded module $|\oX(\tsigma)|=\bigoplus^{r}_{i=1} P_{\K{\tsigma}{i}}{[\Rho{\tsigma}{i}]}$.
    \item %The components of the differential $d_{\tsigma}$ are given by morphisms between the above direct summands of $|\oX(\tsigma)|$ in the following way.\\
    The differential $d_{\tsigma}$ consists of the nonzero components $\phi^{\tsigma}_i$ of degree 1 between $P_{\K{\tsigma}{i+1}}{[\Rho{\tsigma}{i+1}]}$ and $P_{\K{\tsigma}{i}}{[\Rho{\tsigma}{i}]}$, induced by $\phi_{\paths{\tsigma}{i}}$, where
    \begin{itemize}
        \item $\phi^{\tsigma}_i$ is from $P_{\K{\tsigma}{i+1}}{[\Rho{\tsigma}{i+1}]}$ to $P_{\K{\tsigma}{i}}{[\Rho{\tsigma}{i}]}$, if $\M{\tsigma}{i}$ is to the left of $\tsigma(i,i+1)$ in $\poly{\tsigma}{i}$ (see the right picture of \Cref{the sign of delta}), and
        \item $\phi^{\tsigma}_i$ is from $P_{\K{\tsigma}{i}}{[\Rho{\tsigma}{i}]}$ to $P_{\K{\tsigma}{i+1}}{[\Rho{\tsigma}{i+1}]}$, if $\M{\tsigma}{i}$ is to the right of $\tsigma(i,i+1)$ in $\poly{\tsigma}{i}$ (see the left picture of \Cref{the sign of delta}).
    \end{itemize}
%    For (resp. $-1$), the dg $\Lambda$-module morphism $\phi_{\paths{\tsigma}{i}}[\Rho{\tsigma}{i}]$ (resp. $\phi_{\paths{\tsigma}{i}}[\Rho{\tsigma}{i}]$) induces a morphism in $\mcC_{dg}(\Lambda)$ of degree $1$, which is simply denoted by $\phi^{\tsigma}_i$
%$$P_{\K{\tsigma}{i+1}}{[\Rho{\tsigma}{i+1}]} \xrightarrow{\phi^{\tsigma}_i} P_{\K{\tsigma}{i}}{[\Rho{\tsigma}{i}]},$$
%(resp.
%$$P_{\K{\tsigma}{i}}{[\Rho{\tsigma}{i}]} \xrightarrow{\phi^{\tsigma}_i} P_{\K{\tsigma}{i+1}}{[\Rho{\tsigma}{i+1}]}.)$$
%The other components of $d_{\tsigma}$ are 0.
\end{itemize}
\end{construction}

The dg $\Lambda$-module $\oX(\tsigma)$ can be written simply as
$$\oX(\tsigma) =  {\xymatrix@C=2.5pc{P_{\K{\tsigma}{1}}{[\Rho{\tsigma}{1}]} \ar@{-}[r]^(0.6){\phi^{\tsigma}_1} & \cdots \ar@{-}[r]^(0.4){\phi^{\tsigma}_{{r_{\tsigma}-1}}} & P_{\K{\tsigma}{r_{\tsigma}}}{[\Rho{\tsigma}{r_{\tsigma}}]} \notag
}}.$$

\begin{theorem}[\cite{QZZ}, Thm. 4.11, Thm. 6.1]\label{thm:QZZ}
    The map $\oX$ constructed above gives rise to an injective map from the set $\OC$ to the set of isoclasses of indecomposable objects in $\per\Lambda$. Moreover, the following hold. %satisfying the following.
    \begin{itemize}
        \item For any $\tsigma\in\OC$ and any $\rho\in\mathbb{Z}$, we have the isomorphism 
        \begin{equation}\label{eq:shift}
            \oX(\tsigma[\rho])\cong \oX(\tsigma)[\rho].
        \end{equation}
        \item For any two $\tsigma,\tsigma'\in\OC$ and $\rho\in\mathbb{Z}$,  there is an associated $f_{\rl}\in Z^\rho\mcHom_{\Lambda}(M,N)$ to $\rl\in\overrightarrow{\cap}^{\rho}(\tsigma,\tsigma')$, such that $\bar{f}_{\rl}$'s form a basis of $H^\rho\mcHom_{\Lambda}(M,N)$, where $\bar{f}_{\rl}$ is the homotopy class of $f_{\rl}$. In particular, we have
    \begin{equation}\label{eq:int=dim}
        \Int^{\rho}(\tsigma,\tsigma')=\dim\Hom_{\per\Lambda}(\oX(\tsigma),\oX(\tsigma')[\rho]).
    \end{equation}
    \end{itemize}
\end{theorem}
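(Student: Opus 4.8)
\emph{Reduction.} The plan is to reduce the whole statement to a computation inside the morphism complexes $\mcHom_{\Lambda}(\oX(\tsigma),\oX(\tsigma'))$. Since $\Lambda=\Lambda_{\mfU}$ carries the zero differential, each $P_i=e_i\Lambda$ is a direct summand of the free module $\Lambda$ and hence $\oK$-projective; therefore every $\oX(\tsigma)$, being an iterated extension of shifts of the $P_i$, is $\oK$-projective, and $\Hom_{\per\Lambda}(\oX(\tsigma),\oX(\tsigma')[\rho])=H^{\rho}\mcHom_{\Lambda}(\oX(\tsigma),\oX(\tsigma'))$. The basic building block is $\mcHom_{\Lambda}(P_i,P_j)\cong e_j\Lambda e_i$, which by \Cref{rmk:rebuilt} has as a basis the paths of $Q_{\mfU}$ joining $i$ and $j$ that do not lie in $\langle I_{\mfU}\rangle$ (equivalently, the composable chains of inner-angle oriented intersections of $\mfU$), graded by path-degree.

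\emph{Well-definedness, perfectness, and the shift formula.} First I would verify $d_{\tsigma}^{2}=0$. Its only possibly nonzero components are the composites $\phi^{\tsigma}_{i+1}\phi^{\tsigma}_{i}$ of two consecutive differential components pointing the same way along the line, and such a composite is $\phi_{q}$ for the concatenation $q$ of the paths $\paths{\tsigma}{i}$ and $\paths{\tsigma}{i+1}$; a direct local inspection, using that $I_{\mfU}$ is generated by products of adjacent inner angles, shows $q\in\langle I_{\mfU}\rangle$ exactly when these two components compose. This is the one and only place the gentle relations enter. Hence $\oX(\tsigma)$ is a bounded complex of summands of $\Lambda$, so $\oX(\tsigma)\in\per\Lambda$, and it is indecomposable by the usual argument for line-shaped complexes over gentle algebras: the maps $\phi^{\tsigma}_{i}$ are part of a basis of the ambient Hom-spaces, which forces any idempotent endomorphism of $\oX(\tsigma)$ to be $0$ or $\id$. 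For \eqref{eq:shift}: the grading of $\tsigma[\rho]$ is that of $\tsigma$ post-composed with a clockwise rotation by $\rho\pi$, so the index identities recalled after \eqref{eq:hkk1} give $\Rho{\tsigma[\rho]}{i}=\Rho{\tsigma}{i}+\rho$, while the crossed arcs, the polygons $\poly{\tsigma}{i}$, the points $\M{\tsigma}{i}$ and the left/right data are unchanged; thus $|\oX(\tsigma[\rho])|=|\oX(\tsigma)|[\rho]$ and the differentials agree up to the sign built into the shift of a complex.

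\emph{The morphism complex (the main step).} The heart of the proof --- and the step I expect to be the main obstacle --- is the computation of $H^{\ast}\mcHom_{\Lambda}(\oX(\tsigma),\oX(\tsigma'))$. As a graded vector space it has a basis of triples $(i,j,q)$ with $q$ a nonzero path of $Q_{\mfU}$ joining $\K{\tsigma}{i}$ and $\K{\tsigma'}{j}$; reading $q$ as a chain of inner angles, such a triple encodes a way of connecting a segment of $\tsigma$ near $\ver{\tsigma}{i}$ to a segment of $\tsigma'$ near $\ver{\tsigma'}{j}$ through a run of $\mfU^{*}$-polygons, i.e.\ a piece of local overlap data of the two arcs. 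Grouping the triples into maximal such runs decomposes the complex as a direct sum of subcomplexes, one per ``overlap region'' of $\tsigma$ and $\tsigma'$, and I would then compute the cohomology of each type: (i) a transversal interior crossing contributes one dimension, concentrated in the cohomological degree equal to its intersection index; (ii) a shared endpoint in $\mfM$ contributes one dimension, in the degree prescribed by \Cref{def:int} --- this is where the formula $\ind_{\Po}=\ind_{\alpha(0)}(\,\cdot\,,\tilde{\alpha})-\ind_{\alpha(1)}(\,\cdot\,,\tilde{\alpha})$ enters; (iii) a ``parallel'' overlap with no genuine crossing is an acyclic zig-zag admitting an evident contracting homotopy. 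Assembling these, the surviving cohomology classes are in bijection with $\overrightarrow{\cap}^{\rho}(\tsigma,\tsigma')$; this produces the cycles $f_{\rl}$ and proves \eqref{eq:int=dim}. The sign bookkeeping and the boundary case $\Po\in\partial\mfS$ are the delicate points; structurally this is the graded refinement of the computations for ungraded gentle algebras in \cite{HKK,OPS,APS19}, and is carried out in \cite{QZZ}.

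\emph{Injectivity.} Finally, $\oX$ is injective because $\tsigma$ can be reconstructed from $\oX(\tsigma)$: the ordered list of indecomposable summands reads off the arcs of $\mfU^{*}$ crossed by $\tsigma$ in order; each component $\phi^{\tsigma}_{i}$ records the $\mfU^{*}$-polygon containing $\tsigma(i,i+1)$ together with the side on which $\M{\tsigma}{i}$ lies, and thereby the homotopy class of the underlying arc (the reconstruction is ambiguous only by the orientation reversal, which does not change the arc); and the shifts $\Rho{\tsigma}{i}$ themselves, not merely their successive differences, pin down the grading. Alternatively, \eqref{eq:int=dim} separates distinct graded open arcs through their vectors of oriented-intersection numbers against a fixed spanning family of test arcs.
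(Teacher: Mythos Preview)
The paper does not give its own proof of this theorem: it is quoted verbatim from \cite{QZZ} (Theorems~4.11 and~6.1), and the only material supplied in the present paper is the explicit description of the cocycles $f_{\rl}$ for boundary intersections in \Cref{cons:mor}, recorded for later use rather than as a proof. So there is nothing in this paper to compare your attempt against.

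That said, your outline is a reasonable summary of the strategy actually used in \cite{QZZ}: reduce to computing $H^{\ast}\mcHom_{\Lambda}(\oX(\tsigma),\oX(\tsigma'))$ via $\oK$-projectivity, identify the graded pieces of the morphism complex with triples $(i,j,q)$ indexed by nonzero paths, group these into subcomplexes corresponding to local overlap/crossing patterns of the two arcs, and compute the cohomology of each type. One small caution on your $d_{\tsigma}^{2}=0$ paragraph: the paths $\paths{\tsigma}{i}$ live in $Q_{\mfU}$ but are read off from $\mfU^{*}$-polygons, so the relevant gentle relation is not literally that $\paths{\tsigma}{i}$ and $\paths{\tsigma}{i+1}$ are adjacent angles of the same $\mfU$-polygon; one has to pass through the duality between $\mfU$- and $\mfU^{*}$-polygons to see why the concatenation dies. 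The indecomposability step (``any idempotent endomorphism is $0$ or $\id$'') also hides a genuine case analysis in the cited reference; your one-line justification is a placeholder, not an argument.
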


We recall the construction of the morphism $f_{\rl}$ in \Cref{thm:QZZ} for the case that $\rl$ is at a marked point on the boundary, which will be used later.

\begin{construction}\label{cons:mor}

Let $\rl\in\overrightarrow{\cap}^{\rho}(\tsigma,\tsigma')$ at a marked point $M\in\mfM$. Using Notations~\ref{not:arc} for $\tsigma$ and $\tsigma'$, to simplify the notations, we denote $s=r_{\tsigma}$ and $t=r_{\tsigma'}$;  $m_u=\Rho{\tsigma}{u}$ and $n_v=\Rho{\tsigma'}{v}$; $i_u=\K{\tsigma}{u}$ and $j_v=\K{\tsigma'}{v}$; $\phi_u=\phi^{\tsigma}_u$ and $\phi'_v=\phi^{\tsigma'}_v$. Then the dg $\Lambda$-module $\oX(\tsigma)$ is written as 
\begin{gather}\notag
\xymatrix@C=3pc{ P_{i_1}{[m_1]} \ar@{-}[r]^{\phi_1} & P_{i_2}{[m_2]} \ar@{-}[r]^(0.65){\phi_2} & \cdots  \ar@{-}[r]^(0.4){\phi_{s-1}} & P_{i_s}{[m_s]}
},
\end{gather}
and the dg $\Lambda$-module $\oX(\tsigma')$ is written as
\begin{gather}\notag
\xymatrix@C=3pc{ P_{j_1}{[n_1]} \ar@{-}[r]^{\phi_1'} & P_{j_2}{[n_2]} \ar@{-}[r]^(0.55){\phi_2'} & \cdots  \ar@{-}[r]^(0.45){\phi'_{{t-1}}} & P_{j_t}{[n_t]}
}.
\end{gather}
Let $k$ be the maximum such that $i_u=j_u$ for all $1\leq u\leq k$. Then the non-zero components of $f_\alpha$ are $(-1)^{\rho(u-1)}\id:P_{i_u}[m_u]\to P_{j_u}[n_u]$, $1\leq u\leq k$, and when the path $p$ induced by the arc segment between $\tbeta_{i_{k+1}}$ and $\tbeta_{j_{k+1}}$ (if exists) in the $\mfU^*$-polygon $\poly{\tsigma}{k+1}=\poly{\tsigma'}{k+1}$ is from $j_u$ to $i_u$, $(-1)^{\rho k}\phi$, where $\phi=\phi_p[m_{k+1}]:P_{i_{k+1}}[m_{k+1}]\to P_{j_{k+1}}[n_{k+1}]$. More precisely, there are the following cases.
\begin{enumerate}
\item Case that ${i_1} \neq {j_1}$.
There is a morphism $\phi=\phi_p[m_1]:P_{i_1}{[m_1]}\to P_{j_1}{[n_1]}$ in $\mcC_{dg}(\Lambda)$ of degree $\rho$, where $p$ is the path in $Q_{\mfU}$ from $j_1$ to $i_1$, induced by the arc segment between $\tbeta_{i_1}$ and $\tbeta_{j_1}$ in the $\mfU^*$-polygon $\poly{\tsigma}{1}=\poly{\tsigma'}{1}$, see the middle picture in \Cref{fig:mor123}. In this case, $f_{\rl}$ is given by the following diagram
\begin{gather} \label{eq:mor1}
\xymatrix@C=3pc{P_{i_1}{[m_1]}
\ar[d]_(0.4){\phi}
 \ar@{-}[r]^{\phi_1} & P_{i_2}{[m_2]} \ar@{-}[r]^(0.6){\phi_2} & \cdots  \ar@{-}[r]^(0.4){\phi_{{s-1}}} & P_{i_s}{[m_s]} \\
P_{j_1}{[n_1]} \ar@{-}[r]^{\phi'_1} & P_{j_2}{[n_2]} \ar@{-}[r]^{\phi'_2} & \cdots  \ar@{-}[r]^{\phi'_{{t-1}}} & P_{j_t}{[n_t]}
.}
\end{gather}
 \item Case that $s\leq t$ and $i_u=j_u$ for all $1 \leq u \leq s$, see the left picture in \Cref{fig:mor123}. In this case,
 $f_{\rl}$ is given by the following diagram
    \begin{gather}  \label{eq:mor2}
        \xymatrix@C=3pc{ P_{i_1}{[m_1]} \ar[d]^{\id} \ar@{-}[r]^(0.6){\phi_1} &  \cdots  \ar[d]^{\cdots} \ar@{-}[r]^(0.4){\phi_{k-1}} & P_{i_k}{[m_k]}  \ar[d]^{(-1)^{\rho(k-1)}\id}\\
    P_{j_1}{[n_1]} \ar@{-}[r]^{\phi'_1}  & \cdots  \ar@{-}[r]^{\phi'_2} & P_{j_k}{[n_k]} \ar@{<-}[r]^{\phi'_k}   & \cdots \ar@{-}[r]^{\phi'_{t-1}} & P_{j_t}{[n_t]}
.}
    \end{gather}
\item Case that $s>t$ and $i_u=j_u$ for all $1 \leq u \leq t$, see the right picture in \Cref{fig:mor123}. In this case,
 $f_{\rl}$ is given by the following diagram
 \begin{gather}  \label{eq:mor3}
        \xymatrix@C=3pc{ P_{i_1}{[m_1]} \ar[d]^{\id} \ar@{-}[r]^(0.6){\phi_1}  & \cdots  \ar[d]^{\cdots} \ar@{-}[r]^(0.4){\phi_{k-1}} & P_{i_k}{[m_k]}  \ar[d]^{(-1)^{\rho(k-1)}\id} \ar[r]^(0.6){\phi_k}  & \cdots \ar@{-}[r]^(0.4){\phi_s} & P_{i_s}{[m_s]}  \\
    P_{j_1}{[n_1]} \ar@{-}[r]^{\phi'_1} & \cdots  \ar@{-}[r]^{\phi'_{k-1}} & P_{j_k}{[n_k]}    .}
\end{gather}

\begin{figure}[htpb]
\begin{tikzpicture}
\draw[thick]
(-3.5,0) to (0,0)
(-3.5,4.3) to (0,4.3);
%\draw[blue,thick](-1.85,4.3) to (-1.4,2);
%\draw[blue,thick](-1.9,3.3)node{$\tgamma_{i_k}$};
\fill[fill=gray!10]
(-3.5,0) rectangle (0,-0.3);
\draw[dashed,red,thick]
(-3.25,0.1) to (-3.8,1.25)
(-0.3,0.1) to (0.2,1.25);
\draw[red,thick]
(-3.75,1.3) to (0.15,1.3);
\draw[red,thick]
(-3.75,2.5) to (0.14,2.5);
\draw[dashed,red,thick]
(-3.8,2.6) to (-3.8,3.46)
(0.2,2.6) to (0.2,3.4)
(-3.8,3.6) to (-3,4.3);
\draw[red,thick]
(0.2,3.58) to (-0.6,4.25);
%\draw[blue(-1.85,4.3)node[above]{$M'$};
\fill[fill=gray!10]
(-3.5,4.6) rectangle (0,4.3);
\draw[cyan,thick]
(-1.85,4.3) to [out=180,in=90] (-3,2) to[out=-90,in=160] (-1.7,0)
(-1.7,0) to [out=20,in=-100] (0.1,4.1);
\draw[cyan]
(-3.3,2)node{$\tsigma$}
(0.3,2)node{$\tsigma'$};
\draw[red]
(-1.7,1.5)node{$\teta_{i_1}=\teta_{j_1}$}
(-1.7,2.8)node{$\teta_{i_k}=\teta_{j_k}$}
(-0.45,3.8)node{$\teta_{j_{k+1}}$}
(-1.7,2.2)node{$\vdots$};
\draw[thick,Green,->-=0.55]
(-2.45,0.5) to [out=50] (-0.9,0.5);
\draw[Green](-1.6,1.1)node{${\rl}$};
\draw(-3.2,0)\rpt
(-1.7,0)\bpt
(-1.85,4.3)\bpt
(-0.3,0)\rpt
(-3.8,2.5)\rpt
(-3.8,3.5)\rpt
(0.2,2.5)\rpt
(0.2,3.5)\rpt
(-3.8,1.3)\rpt
(0.2,1.3)\rpt
(-3,4.3)\rpt
(-0.6,4.3)\rpt;
\draw[blue]
(-1.7,0)node[below]{$M$};
%The second figure
\begin{scope}[shift={(5,0)}]
\draw[thick]
(-3.5,0) to (0,0);
\draw[dashed,red,thick]
(-3.25,0.1) to (-3.8,1.25)
(-0.3,0.1) to (0.2,1.26);
\draw[red,thick]
(-3.83,1.37) to (-3.2,2.45)
(0.2,1.37) to (-0.4,2.45);
\draw[dashed,red,thick]
(-3.1,2.5) to (-0.4,2.5);
\draw[cyan,thick]
(-1.7,0) to [out=150,in=-60] (-4,2.5)
(-1.7,0) to [out=30,in=-120] (0.3,2.5);
\draw[cyan]
(-2.8,0.6)node{$\tsigma$}
(-0.6,0.6)node{$\tsigma'$};
\draw[red]
(-3.1,2)node{$\teta_{i_1}$}
(-0.4,2)node{$\teta_{j_1}$};
\fill[fill=gray!10]
(-3.5,0) rectangle (0,-0.3);
\draw[thick,Green,->-=0.55]
(-2.4,0.5) to [out=50]
(-1,0.5);
\draw[Green](-1.7,1.1)node{${\rl}$};
\draw[blue]
(-1.7,0)node[below]{$M$};
\draw(-3.2,0)\rpt
(-1.7,0)\bpt
(-0.3,0)\rpt
(-3.8,1.3)\rpt
(-3.2,2.5)\rpt
(0.2,1.3)\rpt
(-0.4,2.5)\rpt;
	\end{scope}
%The third figure
\begin{scope}[shift={(10,0)}]
	\draw[thick]
(-3.5,0) to (0,0)
(-3.5,4.3) to (0,4.3);
%\draw[blue,thick](-1.85,4.3) to (-2.2,2);
%\draw[blue](-2.4,3)node{$\tgamma_{i_k}$};
\fill[fill=gray!10]
(-3.5,0) rectangle (0,-0.3);
\draw[dashed,red,thick]
(-3.25,0.1) to (-3.8,1.25)
(-0.3,0.1) to (0.2,1.25);
\draw[red,thick]
(-3.75,1.3) to (0.15,1.3);
\draw[red,thick]
(-3.75,2.5) to (0.14,2.5)
(-3.8,3.6) to (-3,4.3);
\draw[dashed,red,thick]
(-3.8,2.6) to (-3.8,3.46)
(0.2,2.6) to (0.2,3.4);
\draw[dashed,red,thick]
(0.2,3.58) to (-0.6,4.3);
%\draw[blue(-1.85,4.3)node[above]{$M'$};
\fill[fill=gray!10]
(-3.5,4.6) rectangle (0,4.3);
\draw[cyan,thick]
(-1.85,4.3) to [out=-10,in=90] (-.3,2) to [out=-90,in=10] (-1.7,0)
(-1.7,0) to [out=150,in=-100] (-3.5,4.1);
\draw[cyan]
(-3,2)node{$\tsigma$}
(0,2)node{$\tsigma'$};
\draw[red]
(-1.7,1.5)node{$\teta_{i_1}=\teta_{j_1}$}
(-1.7,2.8)node{$\teta_{i_k}=\teta_{j_k}$}
(-2.8,3.8)node{$\teta_{i_{k+1}}$}
(-1.7,2.2)node{$\vdots$};
\draw[thick,Green,->-=0.55]
(-2.4,0.5) to [out=50] (-.8,0.5);
\draw[Green](-1.6,1.1)node{${\rl}$};
\draw(-3.2,0)\rpt
(-1.7,0)\bpt
(-1.85,4.3)\bpt
(-0.3,0)\rpt
(-3.8,2.5)\rpt
(-3.8,3.5)\rpt
(0.2,2.5)\rpt
(0.2,3.5)\rpt
(-3.8,1.3)\rpt
(0.2,1.3)\rpt
(-3,4.3)\rpt
(-0.6,4.3)\rpt;
\draw[blue]
(-1.7,0)node[below]{$M$};
\end{scope}
\end{tikzpicture}
 \caption{Morphisms from oriented intersections-1.}
 \label{fig:mor123}	
\end{figure}

  \item Case that $i_u=j_u$ for all $1 \leq u \leq k$ and $i_{k+1}\neq j_{k+1}$, where $k < \min\{s,t\}$. In this case, the simple open arc $\talpha_{i_k}$ divides the polygon $\poly{\tsigma}{k}=\poly{\tsigma'}{k}$ into two parts, hence there are two cases.
 \begin{itemize}
 	\item  Case that $\tbeta_{i_{k+1}}$ and $\tbeta_{j_{k+1}}$ are in the different parts, see the middle picture in \Cref{fig:mor45}. In this case, $f_{{\rl}}$ is given by the following diagram
\begin{gather}  \label{eq:mor4}
        \xymatrix@C=3pc{ P_{i_1}{[m_1]} \ar[d]^{\id} \ar@{-}[r]^(0.6){\phi_1} & \cdots  \ar[d]^{\cdots} \ar@{-}[r]^(0.4){\phi_{k-1}} & P_{i_{k}}{[m_k]}  \ar[d]^{(-1)^{\rho(k-1)}\id} \ar[r]^(0.6){\phi_{k}} & \cdots \ar@{-}[r]^(0.4){\phi_{s-1}} & P_{i_s}{[m_s]}  \\
    P_{j_1}{[n_1]} \ar@{-}[r]^{\phi'_1} & \cdots  \ar@{-}[r]^(0.4){\phi'_{k-1}} & P_{j_{k}}{[n_{k}]}   &  \cdots
    \ar[l]_(0.4){\phi'_{k}}
     \ar@{-}[r]^{\phi'_{t-1}} & P_{j_t}{[n_t]}
.}
\end{gather}

\item Case that $\tbeta_{i_{k+1}}$ and $\tbeta_{j_{k+1}}$ are in the same part. There is a morphism $\phi=\phi_p[m_{k+1}]: P_{i_{k+1}}{[m_{k+1}]}\to P_{j_{k+1}}{[n_{k+1}]}$ of degree $\rho$ in $\mcC_{dg}(\Lambda)$, where $p$ is the path from $j_{k+1}$ to $i_{k+1}$ in $Q_{\mfU}$, induced by the arc segment between $\tbeta_{i_{k+1}}$ and $\tbeta_{j_{k+1}}$ in the $\mfU^*$-polygon $\poly{\tsigma}{k}=\poly{\tsigma'}{k}$, see the left and the right pictures in \Cref{fig:mor45}. In this case, $f_{\rl}$ is given by the following diagram
\begin{gather}  \label{eq:mor5}
\xymatrix@C=2.5pc{ P_{i_1}{[m_1]} \ar[d]^{\id} \ar@{-}[r]^(0.6){\phi_1} &  \cdots \ar@{-}[r]^(0.4){\phi_{k-1}} \ar[d]^{\cdots} & P_{i_{k}}{[m_{k}]} \ar[d]^{(-1)^{\rho(k-1)}\id} \ar@{-}[r]^(0.45){\phi_k} & P_{i_{k+1}}[m_{k+1}] \ar[d]^{(-1)^{\rho k}\phi} \ar@{-}[r]^(0.65){\phi_{k+1}} & \cdots \ar@{-}[r]^(0.4){\phi_{s-1}} & P_{i_s}{[m_s]}
\\
    P_{j_1}{[n_1]} \ar@{-}[r]^(0.6){\phi'_1} & \cdots  \ar@{-}[r]^(0.35){\phi'_{k-1}}    & P_{j_{k}}{[n_{k}]}  \ar@{-}[r]^(0.45){\phi'_k} & P_{j_{k+1}}[n_{k+1}] \ar@{-}[r]^(0.65){\phi'_{k+1}} & \cdots \ar@{-}[r]^{\phi'_{t-1}} & P_{j_t}{[n_t]}
.}
\end{gather}
\end{itemize}

\begin{figure}[htpb]
\begin{tikzpicture}
\draw[thick]
(-3.5,0) to (0,0)
(-3.5,4.6) to (0,4.6);
\fill[fill=gray!10]
(-3.5,0) rectangle (0,-0.3);
\draw[dashed,red,thick]
(-2.85,0.1) to (-3.4,1.26)
(-0.65,0.1) to (-0.2,1.27);
\draw[red,thick]
(-3.35,1.3) to (-0.26,1.3);
\draw[red,thick]
(-3.35,2.3) to (-0.26,2.3);
\draw[red,thick]
(-3.6,3.08) to (-3.6,3.94);
\draw[dashed,red,thick]
(0,3.08) to (0,3.94);
\draw[dashed,red,thick]
(-3.6,2.95) to (-3.44,2.36)
(0,2.9) to (-0.2,2.3);
\draw[red,thick]
(-3.6,4.08) to (-2.6,4.54);
\draw[dashed,red,thick]
(0,4.08) to (-1,4.54);\fill[fill=gray!10]
(-3.5,4.9) rectangle (0,4.6);
%\draw[blue](-1.75,4.6)node[above]{$M'$};
\draw[red]
(-1.7,1.5)node{$\teta_{i_1}=\teta_{j_1}$}
(-2.5,2.6)node{$\teta_{i_k}=\teta_{j_k}$}
(-1.7,2)node{$\vdots$};
\draw[thick,blue]
(-1.75,4.6) to (-1.75,2.2);
\draw[blue]
(-1.4,3.8)node{$\tgamma_{i_k}$};
\draw[cyan,thick]
(-1.7,0) to [out=140,in=-60] (-3.9,3.9)
(-1.7,0) to [out=20,in=-90] (-0.8,1.5) to[out=90,in=-30] (-3.8,4.4);
\draw[cyan]
(-3.3,1.8)node{$\tsigma$}
(-.5,1.8)node{$\tsigma'$};
\draw[thick,Green,->-=0.55]
(-2.3,0.6) to[out=50] (-1,0.6);
\draw[Green](-1.65,0.5)node{${\rl}$};
\draw[red]
(-3.1,3.5)node{$\teta_{i_{k+1}}$}
(-2.5,4.2)node{$\teta_{j_{k+1}}$};
\draw(-2.8,0)\rpt
(-1.7,0)\bpt
(-0.7,0)\rpt
(-1.75,4.6)\bpt
(-2.6,4.6)\rpt
(-1,4.6)\rpt
(-3.4,1.3)\rpt
(-3.4,2.3)\rpt
(-0.2,1.3)\rpt
(-0.2,2.3)\rpt
(-3.6,3)\rpt
(-3.6,4)\rpt
(0,3)\rpt
(0,4)\rpt;
\draw[blue]
(-1.7,0)node[below]{$M$};
%The second figure
\begin{scope}[shift={(5,0)}]
\draw[thick]
(-3.5,0) to (0,0)
(-3.5,4.6) to (0,4.6);
\fill[fill=gray!10]
(-3.5,0) rectangle (0,-0.3);
\draw[dashed,red,thick]
(-2.85,0.1) to (-3.4,1.26)
(-0.65,0.1) to (-0.2,1.27);
\draw[red,thick]
(-3.35,1.3) to (-0.26,1.3);
\draw[red,thick]
(-3.35,2.3) to (-0.26,2.3);
\draw[red,thick]
(-3.6,3.08) to (-3.6,3.94)
(0,3.08) to (0,3.94);
\draw[dashed,red,thick]
(-3.6,2.95) to (-3.44,2.36)
(0,2.9) to (-0.2,2.3);
\draw[dashed,red,thick]
(-3.6,4.08) to (-2.6,4.54)
(0,4.08) to (-1,4.54);
\fill[fill=gray!10]
(-3.5,4.9) rectangle (0,4.6);
%\draw[blue](-1.75,4.6)node[above]{$M'$};
\draw[thick,blue]
(-1.75,4.6) to (-1.75,2.23);
\draw[blue]
(-1.4,3.5)node{$\tgamma_{i_k}$};
\draw[red]
(-2,1.5)node{$\teta_{i_1}=\teta_{j_1}$}
(-2.5,2.6)node{$\teta_{i_k}=\teta_{j_k}$}
(-1.8,2.1)node{$\vdots$};
\draw[cyan,thick]
(-1.7,0) to [out=140,in=-70] (-3.8,4)
(-1.7,0) to (0.2,3.8);
\draw[thick,Green,->-=0.55]
(-2.4,0.7) to [out=50] (-1.34,0.7);
\draw[cyan]
(-3.3,1.8)node{$\tsigma$}
(-0.4,1.8)node{$\tsigma'$};
\draw[red]
(-3.1,3.7)node{$\teta_{i_{k+1}}$}
(-0.4,3.7)node{$\teta_{j_{k+1}}$};
\draw[Green]
(-1.9,0.6)node{${\rl}$};
\draw
(-2.8,0)\rpt
(-1.7,0)\bpt
(-0.7,0)\rpt
(-1.75,4.6)\bpt
(-2.6,4.6)\rpt
(-1,4.6)\rpt
(-3.4,1.3)\rpt
(-3.4,2.3)\rpt
(-0.2,1.3)\rpt
(-0.2,2.3)\rpt
(-3.6,3)\rpt
(-3.6,4)\rpt
(0,3)\rpt
(0,4)\rpt;
\draw[blue]
(-1.7,0)node[below]{$M$};
\end{scope}
%The third figure
\begin{scope}[shift={(10,0)}]
\draw[thick]
(-3.5,0) to (0,0)
(-3.5,4.6) to (0,4.6);
\fill[fill=gray!10]
(-3.5,0) rectangle (0,-0.3);
\draw[dashed,red,thick]
(-2.85,0.1) to (-3.4,1.26)
(-0.65,0.1) to (-0.2,1.27);
\draw[red,thick]
(-3.35,1.3) to (-0.26,1.3);
\draw[red,thick]
(-3.35,2.3) to (-0.26,2.3);
\draw[red,thick]
(0,3.08) to (0,3.94);
\draw[dashed,red,thick]
(-3.6,3.08) to (-3.6,3.94)
(-3.6,2.95) to (-3.44,2.36)
(0,2.9) to (-0.2,2.3);
\draw[dashed,red,thick]
(-3.6,4.08) to (-2.6,4.54);
\draw[red,thick]
(0,4.08) to (-1,4.54);
\fill[fill=gray!10]
(-3.5,4.9) rectangle (0,4.6);
%\draw[blue](-1.75,4.6)node[above]{$M'$};
\draw[red]
(-1.6,1.5)node{$\teta_{i_1}=\teta_{j_1}$}
(-1,2.6)node{$\teta_{i_k}=\teta_{j_k}$};
\draw[thick,cyan]
(-1.7,0) to [out=50,in=-110](0.2,3.9)
(-1.7,0) to [out=150,in=-90]
(-2.6,1.5) to [out=90,in=-170]
(-0,4.3);	
\draw[thick,Green,->-=0.55]
(-2.35,0.6) to[out=50] (-1.25,0.6);
\draw[Green](-1.8,0.55)node{${\rl}$};
\draw[cyan]
(-2.8,1.9)node{$\tsigma$}
(-0.2,1.9)node{$\tsigma'$};
\draw[red]
(-0.4,3.6)node{$\teta_{j_{k+1}}$}
(-1.1,4.3)node{$\teta_{i_{k+1}}$}
(-1.8,2)node{$\vdots$};
\draw[thick,blue]
(-1.75,4.6) to (-1.75,2.2);
\draw[blue]
(-2,3.8)node{$\tgamma_{i_k}$};
\draw(-2.8,0)\rpt
(-1.7,0)\bpt
(-0.7,0)\rpt
(-1.75,4.6)\bpt
(-2.6,4.6)\rpt
(-1,4.6)\rpt
(-3.4,1.3)\rpt
(-3.4,2.3)\rpt
(-0.2,1.3)\rpt
(-0.2,2.3)\rpt
(-3.6,3)\rpt
(-3.6,4)\rpt
(0,3)\rpt
(0,4)\rpt;
\draw[blue]
(-1.7,0)node[below]{$M$};
\end{scope}
\end{tikzpicture}
 \caption{Morphisms from oriented intersections-2.}
 \label{fig:mor45}
\end{figure}
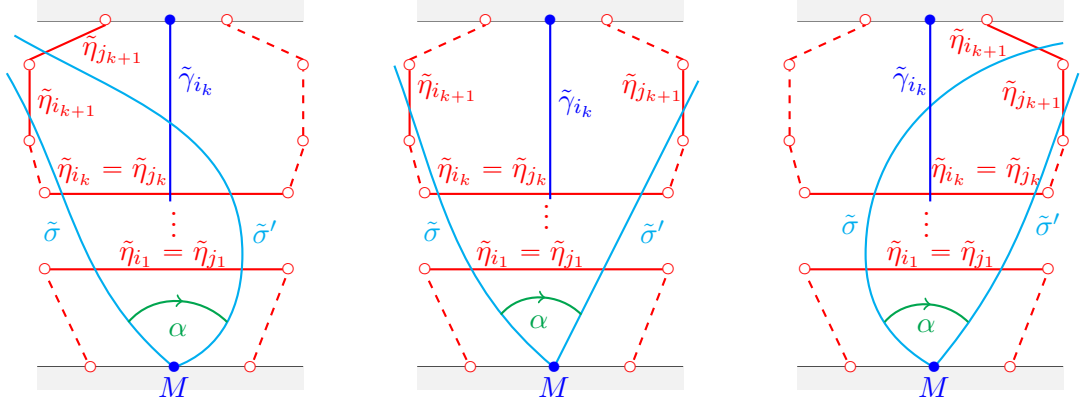

\end{enumerate}
\end{construction}

\begin{notations}
In the case that the degree of $f_{\rl}:\oX(\tsigma)\to \oX(\tsigma')$ is 1 (i.e. $\rho=1$), each of the diagrams \eqref{eq:mor1}-\eqref{eq:mor5} can be viewed as a dg $\Lambda$-module, denoted by $\oX(\tsigma)\xrightarrow{f_{\rl}}\oX(\tsigma')$, or by $\Cone(f_{\rl})$ shortly, in the following way.
\begin{itemize}
\item The underlying graded module is the direct sum of the graded modules appearing in the first and the second rows.
\item The differential is given by all arrows in the diagram.
\end{itemize}
We denote by
\begin{itemize}
    \item $\emb(f_{\rl},X(\tsigma))$ the natural embedding in $\mathcal{C}_{dg}(\Lambda)$ from $X(\tsigma)$ to $\Cone(f_{\rl})$,
    \item $\pro(f_{\rl},X(\tsigma))$ the natural projection in $\mathcal{C}_{dg}(\Lambda)$ from $\Cone(f_{\rl})$ to $X(\tsigma)$,
    \item $\emb(f_{\rl},X(\tsigma'))$ the scale of the natural embedding in $\mathcal{C}_{dg}(\Lambda)$ from $X(\tsigma')$ to $\Cone(f_{\rl})$ by $(-1)^k$,
    \item $\pro(f_{\rl},X(\tsigma'))$ the scale of the natural projection in $\mathcal{C}_{dg}(\Lambda)$ from $\Cone(f_{\rl})$ to $X(\tsigma')$ by $(-1)^k$.
\end{itemize}

%and $\emb(f,N)$ the natural embeddings in $\mathcal{C}_{dg}(\Lambda)$ from $M{[1]}$ and $N$ to $\Cone(f)$ respectively.  Denote by $\pro(f,M[1])$ and $\pro(f,N)$ the natural projections in $\mathcal{C}_{dg}(\Lambda)$ from $\Cone(f)$ to $M{[1]}$ and $N$ respectively.
    
\end{notations}
%This dg $\Lambda$-module is indeed the cone of the morphism from $\oX(\tsigma)[-1]$ to $\oX(\tsigma')$ induced by $f_\alpha$.

\begin{remark}
    If $\Lambda$ is ungraded, then the morphisms in \Cref{cons:mor} are called the quasi-graph maps in \cite{ALP}.
\end{remark}

The following lemma follows directly from \Cref{cons:mor}, via a case-by-case check. Similar results can be found in \cite[Cor. A.9]{QZ19} and \cite[Prop. 3.13]{IQZ}.

\begin{lemma}\label{lem:comp}
Let $\rl$ be an oriented intersection from $\tsigma$ to $\tsigma'$ and $\rl'$ an oriented intersection from $\tsigma'$ to $\tsigma''$, which are composable, see \Cref{composition of rotating angles compatability}. Let $\rl''$ be the composition of ${\rl}$ and ${\rl}'$, which is hence an oriented intersection from $\tsigma$ to $\tsigma''$. Then the following holds in $\mcC_{dg}(\Lambda)$.
%at $M \in \mfM$ with intersection index $\rho$. Then $$f_{\alpha} \in \oZ^{\rho}\mcHom_{\Lambda}(\oX(\tsigma),\oX(\tsigma')).$$
%Assume that $M$ is also an intersection from $\tsigma'$ to another graded open arc $\tsigma''$ with an oriented intersection $\rl'$. We denote the oriented intersection from $\tsigma$ to $\tsigma''$ around $M$ by ${\rl}''$, which is obtained by . Then the following identity holds in $\mcC_{dg}(\Lambda)$
\begin{equation}\notag
	f_{{\rl}'} \circ f_{{\rl}} = f_{{\rl}''}.
\end{equation}

\begin{figure}[htpb]
\begin{tikzpicture}[scale=1.8]
\draw[thick]
(-3.5,0) to (0,0);
\draw[blue]
(-1.7,0)node[below]{$M$};
\fill[fill=gray!10]
(-3.5,0) rectangle (0,-0.3);
\draw[thick,cyan]
(-1.7,0) to (-1.5,1.8)
(-1.7,0) to (-3.4,1.4)
(-1.7,0) to (0,1.2);
\draw[cyan]
(-1.7,1.6)node{$\tsigma'$}
(-2.8,1.2)node{$\tsigma$}
(-0.4,1.1)node{$\tsigma''$};
\draw[thick,Green,->-=0.5]
(-2.2,0.4) to [out=60] (-1.65,0.5);
\draw[Green](-1.95,0.75)node{${\rl}$};
\draw[thick,Green,->-=0.65]
(-1.64,0.6) to [out=40,in=120] (-1.05,0.45);
\draw[Green](-1.2,0.8)node{${\rl}'$};
\draw[thick,Green,->-=0.65]
(-2.55,0.7) to [out=50] (-0.7,0.7);
\draw[Green](-1.4,1.25)node{${\rl}''$};
\draw(-1.7,0)\bpt;
 \end{tikzpicture}
  \caption{Composition of oriented intersections.}
  \label{composition of rotating angles compatability}
 \end{figure}
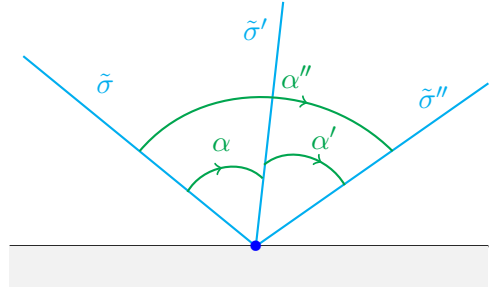
\end{lemma}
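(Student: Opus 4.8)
The plan is to prove the identity $f_{\alpha'}\circ f_{\alpha}=f_{\alpha''}$ by a direct component‑by‑component comparison of the chain maps described in \Cref{cons:mor}, organised around how the three arcs $\tsigma,\tsigma',\tsigma''$ behave near their common boundary marked point $M$. I would begin by recording three auxiliary facts. \emph{(a) Indices add:} with $\rho=\ind_M(\tsigma,\tsigma')$, $\rho'=\ind_M(\tsigma',\tsigma'')$ and $\rho''=\ind_M(\tsigma,\tsigma'')$, one has $\rho''=\rho+\rho'$; this is immediate from \Cref{def:int} since a small arc realising $\alpha''$ is obtained by concatenating ones realising $\alpha$ and $\alpha'$. \emph{(b) Tree property of the arc sequences:} let $k_1,k_2,k_3$ denote the lengths of the maximal common initial segments of the crossed $\mfU^{\ast}$‑arcs for the pairs $(\tsigma,\tsigma')$, $(\tsigma',\tsigma'')$, $(\tsigma,\tsigma'')$; since $\tsigma'$ lies clockwise between $\tsigma$ and $\tsigma''$ at $M$, one has $k_3=\min(k_1,k_2)$ and the two smallest of $k_1,k_2,k_3$ agree. \emph{(c) Paths concatenate:} in the configurations where a connecting morphism occurs, the quiver path attached to $f_{\alpha''}$ at the forking position is the concatenation in $Q_{\mfU}$ of those attached to $f_{\alpha}$ and $f_{\alpha'}$, and this concatenation is again nonzero in $\Lambda$; this is a local statement inside a single $\mfU^{\ast}$‑polygon and follows from \Cref{rmk:rebuilt}.

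Granting these, I would argue as follows; by symmetry we may assume $k_3=k_1\le k_2$. On the common prefix $1\le u\le k_3$, the three maps $f_{\alpha},f_{\alpha'},f_{\alpha''}$ carry the components $(-1)^{\rho(u-1)}\id$, $(-1)^{\rho'(u-1)}\id$, $(-1)^{\rho''(u-1)}\id$ respectively, and by fact (a) the first two compose to the third. At position $k_3+1$ there are two situations. If $k_1<k_2$, then $f_{\alpha'}$ is, up to sign, the identity on the positions where $f_{\alpha}$ forks, so $f_{\alpha'}\circ f_{\alpha}$ inherits the connecting datum of $f_{\alpha}$ there — a connecting morphism $\phi$ in cases \eqref{eq:mor1} and \eqref{eq:mor5}, or none at all (an identity on the prefix with the cone shape dictated by $\oX$) in cases \eqref{eq:mor2}--\eqref{eq:mor4} — and, since $\tsigma''$ agrees with $\tsigma'$ past position $k_3$, this is precisely the datum prescribed for $f_{\alpha''}$, the signs matching by (a). If $k_1=k_2=k_3=:k$, then the forks of $f_{\alpha}$ and $f_{\alpha'}$ both occur at position $k+1$; one runs through the pairs of sub‑cases of \eqref{eq:mor1}--\eqref{eq:mor5}: when both maps carry connecting morphisms $\phi$ and $\phi'$ (with the target of $\phi$ equal to the source of $\phi'$), the composite $\phi'\circ\phi$ is the shifted morphism of the concatenated path with sign $(-1)^{\rho k}(-1)^{\rho'k}=(-1)^{\rho''k}$, which by (c) is the connecting morphism of $f_{\alpha''}$; the mixed sub‑cases, where one of the maps is of the forking‑identity type \eqref{eq:mor2}--\eqref{eq:mor4}, are dispatched the same way, fact (b) ensuring that the fork of $f_{\alpha''}$ sits at position $k+1$ and points as in the matching picture of \Cref{fig:mor123} or \Cref{fig:mor45}. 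Beyond all forks every one of the three maps vanishes, so nothing more is required.

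The step I expect to be most laborious — though not conceptually hard — is the sign bookkeeping: tracking the factors $(-1)^{\rho(u-1)}$ on identity components, the $(-1)^{\rho k}$ attached to connecting morphisms, and the Koszul signs that surface when composing shifted maps $\phi_p[m]$, so that the signs of $f_{\alpha'}\circ f_{\alpha}$ coincide \emph{on the nose} (not merely up to homotopy) with those of $f_{\alpha''}$; here the additivity $\rho''=\rho+\rho'$ is what makes everything reconcile. The second point requiring genuine care, rather than mere computation, is to confirm that every geometric arrangement of $(\tsigma,\tsigma',\tsigma'')$ near $M$ compatible with $\alpha,\alpha'$ being composable falls into one of the enumerated combinations of cases — in particular that the composite cannot produce a connecting morphism absent from both factors — and this is exactly what the tree property $k_3=\min(k_1,k_2)$ of fact (b) guarantees.
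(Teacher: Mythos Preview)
Your proposal is correct and follows essentially the same approach as the paper: the paper's own proof consists of the single sentence ``follows directly from \Cref{cons:mor}, via a case-by-case check'' (with a pointer to analogous results in the literature), and you have carried out precisely that check. Your organisation around the index additivity $\rho''=\rho+\rho'$, the tree property $k_3=\min(k_1,k_2)$, and the concatenation of paths inside a single $\mfU^{\ast}$-polygon is a clean way to structure the enumeration that the paper leaves implicit.
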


%\begin{proof}
%One can verify the diagrams \eqref{eq:mor1}-\eqref{eq:mor5} to ensure that all squares are commutative. Since the differential of $P_i$ is zero, the first statement is obtained. The second statement follows directly from the constructions.
%\end{proof}

\subsection{Smoothing arcs}\label{subsec: smoothing}

 \begin{definition}
 Let $\rl$ be an oriented intersection between $\tsigma$ and $\tsigma'$ at $M \in \mfM$.
 The \emph{smoothing arc} $\tsigma \wedge_{\rl} \tsigma'$ of $\tsigma$ and $\tsigma'$ with respect to $\rl$ is obtained by gluing $\tsigma$ and $\tsigma'$ at $M$, replacing its arc segment from $\rl(0)$ to $\rl(1)$ (through $M$) with the arc $\rl$ and smoothing the arc, see \Cref{fig:smoothing}. The grading of $\tsigma \wedge_{\rl} \tsigma'$ inherits that of $\tsigma$.

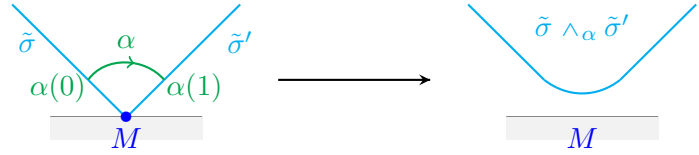
\begin{figure}[htpb]
	\begin{tikzpicture}[scale=1]
\draw[thick]
(-3,0) to (-1,0);
\draw[cyan,thick]
(-2.5,0.5) to (-3.5,1.5);
\draw[cyan]
(-3.3,1)node{$\tsigma$};
\draw[cyan]
(-0.5,1)node{$\tsigma'$};
\draw[cyan,thick]
(-1.5,0.5) to (-.5,1.5);
\draw[cyan,thick]
(-2.5,0.5) to (-2,0) to (-1.5,0.5);
\draw[Green]
(-2.9,0.4)node{$\alpha(0)$}
(-1.1,0.4)node{$\alpha(1)$};
\draw[thick,Green,->-=.6]
(-2.5,0.5) to [out=55, in=130]
(-1.5,0.5);
\draw[Green](-2,1)node{$\rl$};
\draw[thick, ->-=1,>=stealth]
(0,0.5) to (2,0.5);
\fill[fill=gray!10]
(-3,0) rectangle (-1,-0.3);
\draw(-2,0)\bpt;
\draw[blue](-2,0)node[below]{$M$};
%The second picture
\begin{scope}[shift={(6,0)}]
\draw[thick]
(-3,0) to (-1,0);
\draw[cyan,thick]
(-2.5,0.5) to (-3.5,1.5);
\draw[cyan,thick]
(-1.5,0.5) to (-.5,1.5);
\draw[thick,cyan]
(-2.5,0.5) to [out=-38, in=-142]
(-1.5,0.5);
\draw[cyan](-2,1.2)node{$\tsigma\wedge_{\rl}\tsigma'$};
\fill[fill=gray!10]
(-3,0) rectangle (-1,-0.3);
\draw[blue](-2,0)node[below]{$M$};
\end{scope}
\end{tikzpicture}
	\caption{Smoothing arc.}
	\label{fig:smoothing}
\end{figure}
\end{definition}

The grading  $\tsigma'\wedge_{\alpha}\tsigma$ is not the same as $\tsigma \wedge_{\rl} \tsigma'$, unless the index of $\alpha$ equals one.

\begin{remark}
For a given open (resp. closed) marked point $M$, there can be multiple oriented intersections between $\tsigma$ and $\tsigma'$. Different oriented intersections will result in different smoothing arcs. Therefore, it is necessary to specify the oriented intersections when smoothing two arcs, see \Cref{fig:smoothing1} and \Cref{fig:smoothing2}.
\end{remark}

\begin{figure}[htpb]
	\begin{tikzpicture}[scale=1.2]
\filldraw
[fill=gray!10,ultra thick](-1.3,1.3) circle (0.4);
\draw[thick]
(-3,0) to (-.3,0);
\draw[cyan,thick]
(-2,0) to [out=150,in=-90]
(-3,2);
\draw[cyan]
(-3,1)node{$\tsigma$};
\draw[cyan,thick]
(-2,0) to [out=120,in=170]
(-1,2)
(-2,0) to [out=0,in=-10]
(-1,2);
\draw[cyan](-0.5,2)node{$\tsigma'$};
\draw[thick,Green,->-=.6]
(-2.4,0.3) to [out=55, in=130]
(-1.3,0.35);
\draw[Green](-1.8,0.8)node{$\rl$};
\draw[thick, ->-=1,>=stealth]
(.5,1) to (2,1);
%\draw(1.25,1)node[above]{smoothing}node[below]{out};
\fill[fill=gray!10]
(-3,0) rectangle (-0.3,-0.3);
\draw
(-2,0)\bpt;
\draw[blue](-2,0)node[below]{$M$};
% the second figure
\begin{scope}[shift={(6,0)}]
\filldraw
[fill=gray!10,ultra thick](-1.3,1.3) circle (0.4);
\draw[thick]
(-3,0) to (-.3,0);
\draw[cyan,thick]
(-3,2) to [out=-90,in=180]
(-2,.3);
\draw[cyan,thick]
(-2,.3) to [out=0,in=-90]
(-.7,1.2) to [out=90,in=10]
(-1.4,1.9) to [out=-170,in=70]
(-2,1.2) to [out=-110,in=120]
(-2,0);
\draw[cyan](-1.6,2.1)node{$\tsigma\wedge_{\rl}\tsigma'$};
\fill[fill=gray!10]
(-3,0) rectangle (-0.3,-0.3);
\draw
(-2,0)\bpt;
\draw[blue](-2,0)node[below]{$M$};
\end{scope}
	\end{tikzpicture}
	\caption{Smoothing with respect to $\rl$.}
	\label{fig:smoothing1}
\end{figure}
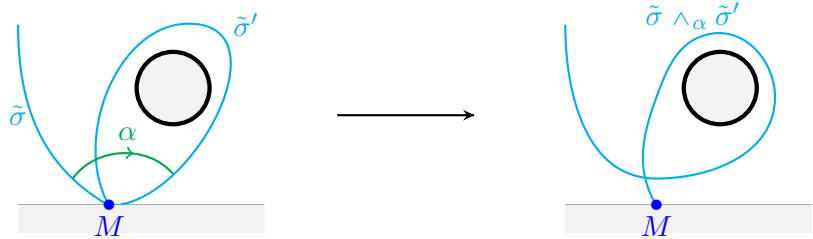

\begin{figure}[htpb]
	\begin{tikzpicture}[scale=1.2]
\filldraw
[fill=gray!10,ultra thick](-1.3,1.3) circle (0.4);
\draw[thick]
(-3,0) to (-.3,0);
\draw[cyan,thick]
(-2,0) to [out=150,in=-90]
(-3,2);
\draw[thick,Green,->-=.6]
(-2.65,0.6) to [out=55, in=145]
(-2.15,0.65);
\draw[Green](-2.4,1)node{$\rl'$};
\draw[cyan,thick]
(-2,0) to [out=120,in=170]
(-1,2)
(-2,0) to [out=0,in=-10]
(-1,2);
\draw[thick, ->-=1,>=stealth]
(.5,1) to (2,1);
\draw[cyan]
(-3,1)node{$\tsigma$};
\draw[cyan](-0.5,2)node{$\tsigma'$};
\fill[fill=gray!10]
(-3,0) rectangle (-0.3,-0.3);
\draw
(-2,0)\bpt;
\draw[blue]
(-2,0)node[below]{$M$};
%the second picture
\begin{scope}[shift={(6,0)}]
\filldraw
[fill=gray!10,ultra thick](-1.3,1.3) circle (0.4);
\draw[thick]
(-3,0) to (-.3,0);
\draw[cyan,thick]
(-3,2) to [out=10,in=110]
(-.6,1.5) to [out=-70,in=20]
(-2,0);
\draw[cyan](-1.9,1.9)node{$\tsigma\wedge_{\rl'}\tsigma'$};
\fill[fill=gray!10]
(-3,0) rectangle (-0.3,-0.3);
\draw
(-2,0)\bpt;
\draw[blue]
(-2,0)node[below]{$M$};
\end{scope}
%\draw(1.25,1)node[above]{smoothing}node[below]{out};
	\end{tikzpicture}
	\caption{Smoothing with respect to $\rl'$.}
	\label{fig:smoothing2}
\end{figure}

\begin{definition}\label{def:thread}
A \emph{thread} of graded open arcs is defined as a sequence of graded open arcs $\tsigma_1,\cdots,\tsigma_r$, together with oriented intersections $\alpha_i$ between $\tsigma_i$ and $\tsigma_{i+1}$, $1 \leq i \leq r-1$, written as
 \begin{gather}\label{eq:thread}
	\xymatrix{ \tsigma_{1} \ar@{-}[r]^{{\rl}_{1}} & \tsigma_{2} \ar@{-}[r]^{{\rl}_{2}} & \cdots \ar@{-}[r]^{{\rl}_{{r-1}}} & \tsigma_{r}
	},
\end{gather}
such that $\alpha_i$ and $\alpha_{i-1}$ 
%are not composable as oriented intersections, i.e., they 
do not share a common end segment of $\tsigma_i$, cf. \Cref{fig:thread}, where the right picture satisfies the condition while the left picture does not.
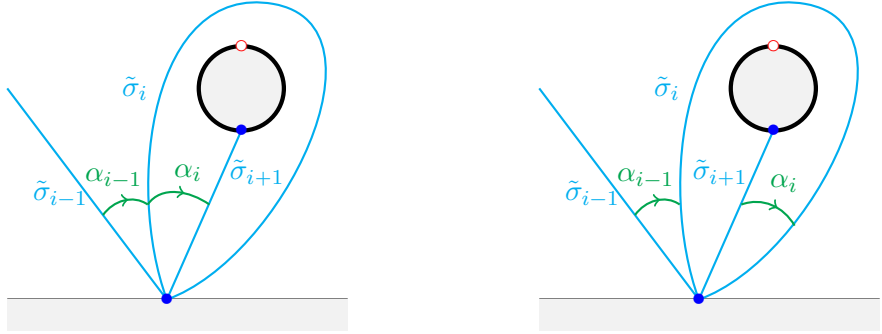
\begin{figure}[htpb]
	\begin{tikzpicture}[scale=1.4]
\filldraw
[fill=gray!10,ultra thick](-1.3,2) circle (0.4);
\draw[thick]
(-3.5,0) to (-.3,0);
\draw[cyan,thick]
(-1.3,1.6) to (-2,0)
(-2,0) to (-3.5,2)
(-2,0) to [out=110,in=170]
(-1,2.8) to [out=-10,in=20]
(-2,0);
\draw[cyan]
(-3,1)node{$\tsigma_{i-1}$}
(-1.15,1.2)node{$\tsigma_{i+1}$}
(-2.3,2)node{$\tsigma_i$};
\draw[thick,Green,->-=.6]
(-2.6,0.8) to [out=55, in=145]
(-2.18,0.9);
\draw[Green]
(-2.5,1.15)node{${\rl}_{i-1}$};
\fill[fill=gray!10]
(-3.5,0) rectangle (-0.3,-0.3);
\draw[thick,Green,->-=.6]
(-2.18,0.9) to [out=55, in=145]
(-1.6,0.9);
\draw[Green]
(-1.8,1.2)node{${\rl}_i$};
\draw
(-1.3,2.4)\rpt
(-1.3,1.6)\bpt
(-2,0)\bpt;
%the second figure
\begin{scope}[shift={(5,0)}]
	\filldraw
[fill=gray!10,ultra thick](-1.3,2) circle (0.4);
\draw[thick]
(-3.5,0) to (-.3,0);
\draw[cyan,thick]
(-1.3,1.6) to (-2,0)
(-2,0) to (-3.5,2)
(-2,0) to [out=110,in=170]
(-1,2.8) to [out=-10,in=20]
(-2,0);
\draw[cyan]
(-3,1)node{$\tsigma_{i-1}$}
(-1.8,1.2)node{$\tsigma_{i+1}$}
(-2.3,2)node{$\tsigma_i$};
\draw[thick,Green,->-=.6]
(-2.6,0.8) to [out=55, in=145]
(-2.18,0.9);
\draw[Green]
(-2.5,1.15)node{${\rl}_{i-1}$};
\draw[thick,Green,->-=.6]
(-1.6,0.9) to [out=20, in=120]
(-1.1,0.7);
\draw[Green]
(-1.2,1.1)node{${\rl}_i$};
\fill[fill=gray!10]
(-3.5,0) rectangle (-0.3,-0.3);
\draw
(-1.3,2.4)\rpt
(-1.3,1.6)\bpt
(-2,0)\bpt;
\end{scope}
	\end{tikzpicture}
\caption{The condition of threads.}
\label{fig:thread}
\end{figure} 

%More precisely, 

%the sequence satisfies the following two conditions.
%\begin{enumerate}
%	\item If the endpoints of $\tsigma_i$ are distinct, then the oriented intersections ${\rl}_{i-1}$ and ${\rl}_i$ do not have the same marked point.
%	\item If the endpoints of $\tsigma_i$ are the same, then the oriented intersections ${\rl}_{i-1}$ and ${\rl}_i$ are non-adjacent, meaning that they do not share a common edge.
%\end{enumerate}

The \emph{smoothing arc} associated to this thread is denoted by $\tsigma_{1} \wedge_{{\rl}_{1}} \cdots \wedge_{{\rl}_{{r-1}}} \tsigma_{r}$, whose grading inherits that of $\tsigma_{1}$. Note that the condition for a thread ensures the well-definedness of its smoothing arc.
\end{definition}

%Above two conditions in \Cref{def:thread} ensure us get a well-defined smoothing arc associated to the thread in \eqref{eq:thread}.

%We also write this thread as

\begin{example}\label{exm:thread}
    Let $\tsigma$ be a graded open arc. Using \Cref{not:arc}, each arc segment $\tsigma(i,i+1)$ corresponds to an oriented intersection ${\rl}_i$ of index 1 between $\tgamma_{k_i}[\Rho{\tsigma}{i}]$ and $\tgamma_{k_{i+1}}[\Rho{\tsigma}{i+1}]$ around $\M{\tsigma}{i}$, see \Cref{fig:seg to oint}. Thus, we get a thread of graded open arcs
\begin{gather}\label{eq:chu thread}
\xymatrix@C=3pc{
\talpha_{\K{\tsigma}{1}}[\Rho{\tsigma}{1}] \ar@{-}[r]^(0.6){{\rl}_{1}}  & \talpha_{\K{\tsigma}{2}}[\Rho{\tsigma}{2}] \ar@{-}[r]^(0.6){{\rl}_{2}}  & \cdots \ar@{-}[r]^(0.4){{\rl}_{{r_{\tsigma}-1}}} & \talpha_{\K{\tsigma}{r_{\tsigma}}}[\Rho{\tsigma}{r_{\tsigma}}]
	}.
\end{gather}
%Note that, in this example, each $\alpha_i$ has index 1, and hence, the grading of the smoothing arc $\talpha_{\K{\tsigma}{1}}[\Rho{\tsigma}{1}] \wedge_{{\rl}_{1}} \cdots \wedge_{{\rl}_{{r-1}}} \talpha_{\K{\tsigma}{r_{\tsigma}}}[\Rho{\tsigma}{r_{\tsigma}}]$ is compatible with any of $\talpha_{\K{\tsigma}{i}}[\Rho{\tsigma}{i}] $.

    \begin{figure}[htpb]
	\begin{tikzpicture}[scale=1.5]
\draw[thick]
(-5,0) to (1,0);
\fill[fill=gray!10]
(-5,0) rectangle (1,-0.3);
\draw[blue]
(-2,0)node[below]{$\M{\tsigma}{i}$};
%\draw(-2.8,.6)node{$\mbD_i$};
\draw[dashed,red,thick]
(-.5,0.05) to (.3,1.55)
(-3.5,0.05) to (-4.3,1.55)
(-2.75,2.7) to (-1.2,2.7);
\draw[red,thick]
(.3,1.65) to (-1.16,2.7)
(-4.3,1.65) to (-2.85,2.7);
\draw[cyan,thick]
(-3.5,2.7) to [out=-30,in=-150] (-.5,2.7);
\draw[cyan]
(-2,2.5)node{$\tsigma(i,i+1)$};
\draw[blue,thick]
(-2,0) to (-4,2.2)
(-2,0) to (-0.2,2.5);
\draw[thick]
(-3.8,1.6)node{$\ver{\tsigma}{i}$}
(-0.3,1.8)node{$\ver{\tsigma}{i+1}$};
\draw[red,thick]
(-3.2,2.2)node{$\teta_{k_i}$}
(-0.8,2.2)node{$\teta_{k_{i+1}}$};
\draw[blue,thick]
(-3.4,0.8)node{$\tgamma_{\K{\tsigma}{i}}[\Rho{\tsigma}{i}]$}
(-0.75,0.8)node{$\tgamma_{\K{\tsigma}{i+1}}[\Rho{\tsigma}{i+1}]$};
\draw[Green,thick,->-=0.6]
(-2.5,0.55) to [out=30,in=150] (-1.6,0.55);
\draw[Green]
(-2.1,0.9)node{$\alpha_i$};
\draw
(-2,1.6)node{$\poly{\tsigma}{i}$};
\draw
(-2,0)\bpt
(-.5,0)\rpt
(-3.5,0)\rpt
(-4.3,1.6)\rpt
(-2.8,2.7)\rpt
(-1.2,2.7)\rpt
(.3,1.6)\rpt;
\end{tikzpicture}
\caption{Segment $\tsigma(i,i+1)$ induces an oriented intersection $\rl_i$.}
	\label{fig:seg to oint}
\end{figure}
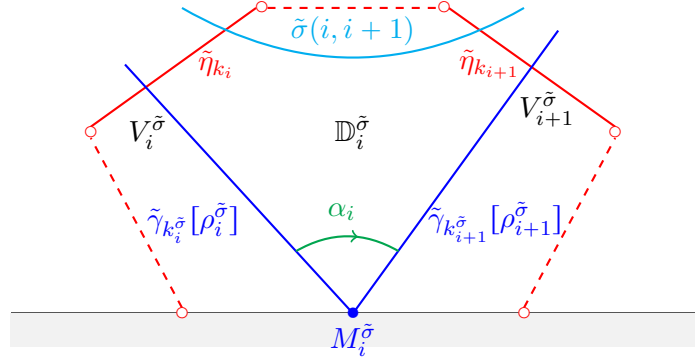
\end{example}

%Recall that there exists  corresponding to , which gives the morphism $\phi^{\tsigma}_i$ between $P_{k_i}[\Rho{\tsigma}{i}]$ and $P_{k_{i+1}}[\Rho{\tsigma}{i+1}]$, see \Cref{cons:obj} and .

%Thus we get a thread of graded open arcs

%In this sense, the thread in \eqref{eq:chu thread} corresponds to $\oX(\tsigma)$ in \Cref{cons:obj}, that is,

We introduce the following definition.
\begin{definition}
A thread of graded open arcs in \eqref{eq:thread} that satisfies the following two conditions is called a \emph{dg thread}.
\begin{enumerate}
	\item The index of each $\alpha_i$ is 1.
	\item If $f_{{\rl}_{i}}$ and $f_{{\rl}_{{i+1}}}$ have the same direction, then their composition is zero in $\mcC_{dg}(\Lambda)$.
\end{enumerate}
\end{definition}
%Note that for any dg thread, the grading of its smoothing arc inherits from that of any arc in it.

The following lemma is useful.

\begin{lemma} \label{lem:compo}
For any $\rl\in\overrightarrow{\cap}^{1}(\tsigma,\tsigma')$, there exists a homotopy equivalence $$\chi_{\rl}: (\xymatrix{\oX(\tsigma) \ar[r]^{f_{\rl}}& \oX(\tsigma')}) \to \oX(\tsigma\wedge_{\rl}\tsigma')$$ with homotopy inverse $\psi_{\rl}$, such that the following hold. Let $\xymatrix@C=1.2pc{\tsigma_1 \ar@{-}[r]^{\rl_1} & \tsigma_2 \ar@{-}[r]^{\rl_2} & \tsigma_3}$ be a dg thread. We denote by $\rl_1'$ the induced oriented intersection between $\tsigma_1$ and $\tsigma_2\wedge_{\rl_2}\tsigma_3$ induced by $\rl_1$ see \Cref{fig:induced oint}.
\begin{enumerate}
\item If the orientation of $\rl_1$ is $\tsigma_1 \xrightarrow{{{\rl}_{{1}}}}  \tsigma_2$, 
%\xrightarrow{{{\rl}_{{2}}}}  \tsigma_3$ or $\tsigma_1 \xrightarrow{{{\rl}_{{1}}}}  \tsigma_2  \xleftarrow{{{\rl}_{{2}}}} \tsigma_3$
then 
%the following holds 
in $\mcC_{dg}(\Lambda)$, we have
\begin{equation}\label{eq:compo1}
	\chi_{{\rl}_{2}} \circ
\emb(f_{{\rl}_2},\oX(\tsigma_2)) \circ {f}_{{\rl}_{{1}}}=
%\pm 
f_{{\rl}'_1}.
\end{equation}
\item If the orientation of $\rl_1$ is 
%and $\rl_2$ are 
$\tsigma_1 \xleftarrow{{{\rl}_{{1}}}}
\tsigma_2$, %\xleftarrow{{{\rl}_{{2}}}}\tsigma_3$ or $\tsigma_1  \xleftarrow{{{\rl}_{{1}}}}\tsigma_2  \xrightarrow{{{\rl}_{{2}}}}\tsigma_3$, 
%if we denote by $\rl'_1$ the oriented intersection from $\tsigma_2\wedge_{\rl_2}\tsigma_3$ to $\tsigma_1$ induced by $\rl_1$, see the second row in \Cref{fig:induced oint}, 
then 
%the following holds 
in $\mcC_{dg}(\Lambda)$, we have
\begin{equation}\label{eq:compo2}
	{f}_{{\rl}_{{1}}} \circ \pro(f_{{\rl}_2},\oX(\tsigma_2)) \circ \psi_{{\rl}_{2}}=
 f_{{\rl}'_1}.
\end{equation}
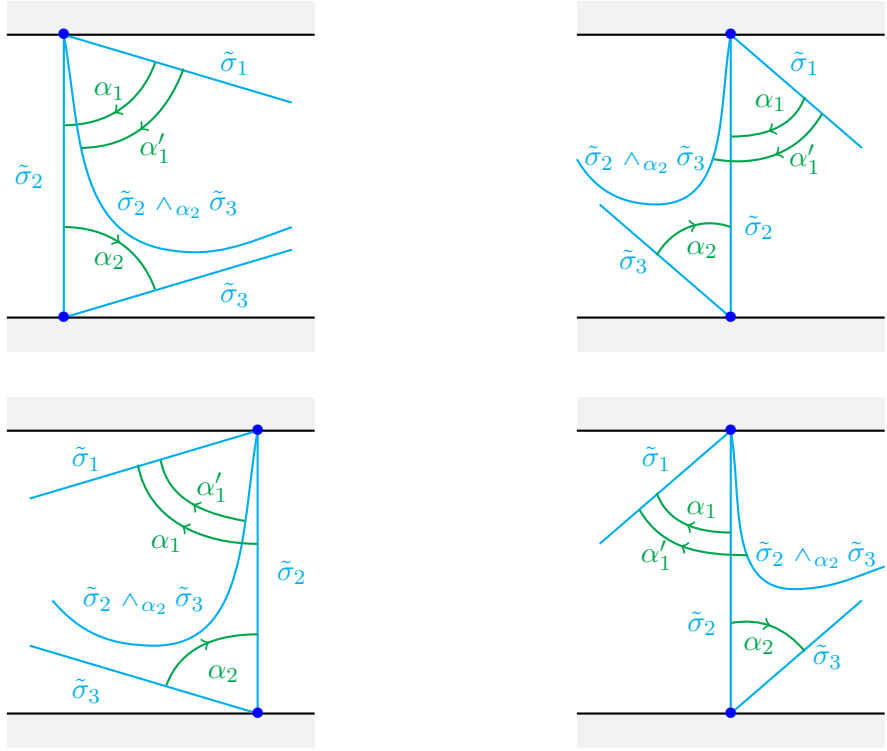
\begin{figure}[htpb]
	\begin{tikzpicture}[scale=1.5]
%the first picture
\fill[fill=gray!10]
(-3,0) rectangle (-0.3,-0.3);
\fill[fill=gray!10]
(-3,2.8) rectangle (-0.3,2.5);
\draw[thick]
(-3,0) to (-.3,0)
(-3,2.5) to (-.3,2.5);
\draw[cyan,thick]
(-2.5,0) to (-2.5,2.5)
(-2.5,0) to (-0.5,0.6)
(-2.5,2.5) to (-0.5,1.9);
\draw[cyan,thick]
(-1,2.25)node{$\tsigma_1$}
(-1,0.2)node{$\tsigma_3$}
(-2.8,1.25)node{$\tsigma_2$};
\draw[Green,thick,->-=0.5]
(-2.5,0.8) to [out=0,in=110] (-1.7,0.25);
\draw[Green,thick,-<-=0.5]
(-2.5,1.7) to [out=0,in=-110] (-1.7,2.25);
\draw[Green,thick]
(-2.1,0.5)node{$\rl_2$}
(-2.1,2)node{$\rl_1$};
\draw[cyan,thick]
(-0.5,0.8) to [out=-160,in=-10] (-1.6,0.6)
to [out=170,in=-80] (-2.5,2.5);
\draw[Green,thick,-<-=0.5]
(-2.35,1.5) to [out=0,in=-110] (-1.45,2.2);
\draw[Green,thick]
(-1.7,1.5)node{$\rl'_1$};
\draw[cyan,thick]
(-1.5,1)node{$\tsigma_2\wedge_{\rl_2}\tsigma_3$};
\draw
(-2.5,0)\bpt
(-2.5,2.5)\bpt;
% the second figure
\begin{scope}[shift={(5,0)}]
\fill[fill=gray!10]
(-3,0) rectangle (-0.3,-0.3);
\fill[fill=gray!10]
(-3,2.8) rectangle (-0.3,2.5);
\draw[thick]
(-3,0) to (-.3,0)
(-3,2.5) to (-.3,2.5);
\draw[cyan,thick]
(-1.65,0) to (-1.65,2.5)
(-1.65,0) to (-2.8,1)
(-1.65,2.5) to (-0.5,1.5);
\draw[cyan,thick]
(-1,2.25)node{$\tsigma_1$}
(-2.5,0.5)node{$\tsigma_3$}
(-1.4,0.8)node{$\tsigma_2$};
\draw[Green,thick,->-=0.6]
(-2.3,0.55) to [out=60,in=160] (-1.65,0.8);
\draw[Green,thick,-<-=0.5]
(-1.65,1.6) to [out=0,in=-110] (-1,1.95);
\draw[Green,thick]
(-1.9,0.6)node{$\rl_2$}
(-1.3,1.9)node{$\rl_1$};
\draw[cyan,thick]
(-1.65,2.5) to [out=-100,in=0]
(-2.3,1) to [out=-180,in=-60] (-3,1.4);
\draw[cyan,thick]
(-2.4,1.4)node{$\tsigma_2\wedge_{\rl_2}\tsigma_3$};
\draw[Green,thick,-<-=0.55]
(-1.8,1.4) to [out=-10,in=-120] (-0.85,1.8);
\draw[Green,thick]
(-1,1.4)node{$\rl'_1$};
\draw
(-1.65,0)\bpt
(-1.65,2.5)\bpt;
\end{scope}
%the third picture
\begin{scope}[shift={(0,-3.5)}]
\fill[fill=gray!10]
(-3,0) rectangle (-0.3,-0.3);
\fill[fill=gray!10]
(-3,2.8) rectangle (-0.3,2.5);
\draw[thick]
(-3,0) to (-.3,0)
(-3,2.5) to (-.3,2.5);
\draw[cyan,thick]
(-0.8,0) to (-0.8,2.5)
(-0.8,0) to (-2.8,0.6)
(-0.8,2.5) to (-2.8,1.9);
\draw[cyan,thick]
(-2.3,2.25)node{$\tsigma_1$}
(-2.3,0.2)node{$\tsigma_3$}
(-0.5,1.25)node{$\tsigma_2$};
\draw[Green,thick,-<-=0.5]
(-0.8,0.7) to [out=-180,in=70] (-1.6,0.25);
\draw[Green,thick,->-=0.5]
(-0.9,1.7) to [out=170,in=-80] (-1.65,2.24);
\draw[Green,thick]
(-1.1,0.35)node{$\rl_2$}
(-1.2,2)node{$\rl_1'$};
\draw[cyan,thick]
(-2.6,1) to [out=-50,in=180] (-1.7,0.6)
to [out=0,in=-100] (-0.8,2.5);
\draw[Green,thick,->-=0.5]
(-0.8,1.5) to [out=180,in=-80] (-1.85,2.2);
\draw[Green,thick]
(-1.6,1.5)node{$\rl_1$};
\draw[cyan,thick]
(-1.8,1)node{$\tsigma_2\wedge_{\rl_2}\tsigma_3$};
\draw
(-0.8,0)\bpt
(-0.8,2.5)\bpt;
\end{scope}
%the forth picture
\begin{scope}[shift={(5,-3.5)}]
\fill[fill=gray!10]
(-3,0) rectangle (-0.3,-0.3);
\fill[fill=gray!10]
(-3,2.8) rectangle (-0.3,2.5);
\draw[thick]
(-3,0) to (-.3,0)
(-3,2.5) to (-.3,2.5);
\draw[cyan,thick]
(-1.65,0) to (-1.65,2.5)
(-1.65,0) to (-0.5,1)
(-1.65,2.5) to (-2.8,1.5);
\draw[cyan,thick]
(-2.3,2.25)node{$\tsigma_1$}
(-0.8,0.5)node{$\tsigma_3$}
(-1.9,0.8)node{$\tsigma_2$};
\draw[Green,thick,-<-=0.6]
(-1,0.55) to [out=130,in=10] (-1.65,0.8);
\draw[Green,thick,->-=0.5]
(-1.65,1.6) to [out=180,in=-70] (-2.3,1.95);
\draw[Green,thick]
(-1.4,0.6)node{$\rl_2$}
(-1.9,1.8)node{$\rl_1$};
\draw[cyan,thick]
(-1.65,2.5) to [out=-80,in=180]
(-1.1,1.1) to [out=0,in=-160] (-0.3,1.3);
\draw[cyan,thick]
(-0.9,1.4)node{$\tsigma_2\wedge_{\rl_2}\tsigma_3$};
\draw[Green,thick,->-=0.55]
(-1.5,1.4) to [out=180,in=-60] (-2.45,1.8);
\draw[Green,thick]
(-2.3,1.4)node{$\rl'_1$};
\draw
(-1.65,0)\bpt
(-1.65,2.5)\bpt;	
\end{scope}
\end{tikzpicture}
	\caption{Induced oriented intersections after smoothing.}
	\label{fig:induced oint}
\end{figure}
\end{enumerate}
\end{lemma}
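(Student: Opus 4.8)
The plan is to construct $\chi_{\rl}$ and $\psi_{\rl}$ explicitly by a Gaussian elimination (cancellation of a contractible direct summand) of $\Cone(f_{\rl})$, and then to verify \eqref{eq:compo1} and \eqref{eq:compo2} by a component-by-component comparison. First I would recall from \Cref{cons:mor} the shape of $f_{\rl}$: writing $\oX(\tsigma)$ and $\oX(\tsigma')$ as the zigzags of \Cref{cons:obj} and letting $k$ be maximal with $\K{\tsigma}{u}=\K{\tsigma'}{u}$ for $1\le u\le k$, the morphism $f_{\rl}$ consists of the components $(-1)^{u-1}\id\colon P_{\K{\tsigma}{u}}[\Rho{\tsigma}{u}]\to P_{\K{\tsigma'}{u}}[\Rho{\tsigma'}{u}]$ for $1\le u\le k$, together with at most one further component $\phi$ induced by a path $p$ through the $\mfU^\ast$-polygon $\poly{\tsigma}{k+1}$ at $M$ (present precisely in cases \eqref{eq:mor1} and \eqref{eq:mor5}). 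In $\Cone(f_{\rl})$ the $k$ maps $(-1)^{u-1}\id$ are isomorphism components of the total differential, so I would cancel the pairs $P_{\K{\tsigma}{u}}[\Rho{\tsigma}{u}]\oplus P_{\K{\tsigma'}{u}}[\Rho{\tsigma'}{u}]$, $1\le u\le k$, one at a time; this produces a dg $\Lambda$-module $N$ together with the standard homotopy equivalence $\chi_{\rl}\colon\Cone(f_{\rl})\to N$ and its homotopy inverse $\psi_{\rl}$, whose sign normalization will be fixed in the next step.

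The second step is to identify $N$ with $\oX(\tsigma\wedge_{\rl}\tsigma')$. This is a local statement at $M$: the arcs $\tsigma$ and $\tsigma'$ run along the same first $k$ dual arcs out of $M$, and smoothing along $\rl$ deletes the bigon bounded by these parallel end segments; hence the sequence of $\mfU^\ast$-arcs crossed by $\tsigma\wedge_{\rl}\tsigma'$ is the crossing sequence of $\tsigma$ with its first $k$ entries removed, followed by that of $\tsigma'$ with its first $k$ entries removed, with the connecting segment through $\poly{\tsigma}{k+1}$ inserted in exactly the two cases where $f_{\rl}$ carries the extra component $\phi$. The gradings of the surviving segments do not change, because the grading of each $\mfU^\ast$-polygon is homotopically constant (the mechanism already used in the proof of \Cref{lem:deg}), and because $\rl$ has index $1$, so the smoothed grading is symmetric in $\tsigma$ and $\tsigma'$. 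Matching this crossing sequence against \Cref{cons:obj} identifies $N$ with $\oX(\tsigma\wedge_{\rl}\tsigma')$; comparing the two differentials then pins down the signs of $\chi_{\rl}$ and $\psi_{\rl}$ (compatibly with the $(-1)^k$ rescalings built into $\emb$ and $\pro$) and shows that $\chi_{\rl}$ is in fact a dg morphism, not merely a homotopy equivalence. This proves the first assertion of the lemma.

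For \eqref{eq:compo1} and \eqref{eq:compo2} I would exploit the dg thread hypotheses on $\tsigma_1,\tsigma_2,\tsigma_3$. Since $\rl_1$ and $\rl_2$ do not share a common end segment of $\tsigma_2$, the end of $\tsigma_2$ carrying $\rl_1$ lies outside the smoothing region of $\tsigma_2\wedge_{\rl_2}\tsigma_3$; hence along that side the crossing sequence of $\tsigma_2\wedge_{\rl_2}\tsigma_3$ coincides with that of $\tsigma_2$, and the overlap length and connecting path attached to the pair $(\tsigma_1,\tsigma_2\wedge_{\rl_2}\tsigma_3)$ at $\rl'_1$ are the same as those attached to $(\tsigma_1,\tsigma_2)$ at $\rl_1$. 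Both sides of \eqref{eq:compo1} are closed morphisms of degree $1$ from $\oX(\tsigma_1)$ to $\oX(\tsigma_2\wedge_{\rl_2}\tsigma_3)$, so it suffices to compare their components into each idempotent summand of the target. On the summands coming from the untouched part of $\tsigma_2$, the map $\chi_{\rl_2}\circ\emb(f_{\rl_2},\oX(\tsigma_2))$ is (up to the fixed signs) the identity inclusion, so there the left-hand side is just $f_{\rl_1}$, which by the previous step equals the restriction of $f_{\rl'_1}$ to those summands. The only remaining components are the composites produced by the cancellation inside $\chi_{\rl_2}$; these are identified with the connecting component of $f_{\rl'_1}$ using \Cref{lem:comp} (composability of the $f$'s along composable oriented intersections) and the dg thread condition that same-direction composites vanish, again with matching sign. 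Equation \eqref{eq:compo2} is the formal dual of \eqref{eq:compo1}, obtained by reading the same picture contravariantly with $\emb$ replaced by $\pro$ and $\chi_{\rl_2}$ by $\psi_{\rl_2}$.

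The main obstacle is bookkeeping rather than conceptual content. The local analysis of the second step must be carried through all five cases of \Cref{cons:mor}, and the comparison of the third step through the various positions of $\rl_1$ relative to the overlap of $\tsigma_1$ and $\tsigma_2$ and to the smoothing region near $\rl_2$; meanwhile one must choose a single coherent sign convention for $\chi_{\rl}$, $\psi_{\rl}$, $\emb$ and $\pro$ so that \eqref{eq:compo1} and \eqref{eq:compo2} hold on the nose in $\mcC_{dg}(\Lambda)$, not merely up to homotopy. Reconciling the $(-1)^{u-1}$ factors of \Cref{cons:mor} with the $(-1)^k$ rescalings and the signs introduced by Gaussian elimination is the delicate point; everything else is a routine, if lengthy, verification.
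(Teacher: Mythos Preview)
Your proposal is correct and follows essentially the same strategy as the paper's proof: both construct $\chi_{\rl}$ and $\psi_{\rl}$ by cancelling the $k$ identity components of $f_{\rl}$ in $\Cone(f_{\rl})$, identify the reduced complex with $\oX(\tsigma\wedge_{\rl}\tsigma')$ via the crossing-sequence description, and then verify \eqref{eq:compo1}--\eqref{eq:compo2} by direct component comparison. The paper simply presents this as an explicit case-by-case listing of the components of $\chi_{\rl}$ and $\psi_{\rl}$ according to the five forms \eqref{eq:mor1}--\eqref{eq:mor5}, rather than naming the mechanism as Gaussian elimination, and then concludes \eqref{eq:compo1}--\eqref{eq:compo2} with the single sentence ``by combining the constructions''---so your write-up is, if anything, more structured and more candid about where the sign bookkeeping bites.
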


\begin{proof}
We construct the homotopy equivalences $\chi_{{\rl}}$ and $\psi_{{\rl}}$ explicitly, using the notations in \Cref{cons:mor}. The underlying graded module of $\oX(\tsigma)\xrightarrow{f_{\rl}}\oX(\tsigma')$ is $$\bigoplus_{1\leq u\leq s}P_{i_u}[m_u]\oplus\bigoplus_{1\leq v\leq t}P_{j_v}[n_v],$$ while that of $\oX(\tsigma\wedge_{\rl}\tsigma')$ is $$\bigoplus_{k+1\leq u\leq s}P_{i_u}[m_u]\oplus\bigoplus_{k+1\leq v\leq t}P_{j_v}[n_v].$$ The non-zero components of $\chi_{\rl}$ are
\begin{itemize}
    \item $\id:P_{i_u}[m_u]\to P_{i_u}[m_u]$, $k+1\leq u\leq s$,
    \item $(-1)^k\id:P_{j_v}[n_v]\to P_{j_v}[n_v]$, $k+1\leq v\leq t$, and
    \item $(-1)^k\phi_k:P_{j_k}[n_k]\to P_{i_{k+1}}[m_{k+1}]$, when $\phi_k$ is from $P_{j_k}[n_k]$ to $P_{i_{k+1}}[m_{k+1}]$.
\end{itemize}
The non-zero components of $\psi_{\rl}$ are
\begin{itemize}
    \item $\id:P_{i_u}[m_u]\to P_{i_u}[m_u]$, $k+1\leq u\leq s$,
    \item $(-1)^k\id:P_{j_v}[n_v]\to P_{j_v}[n_v]$, $k+1\leq v\leq t$, and
    \item $\phi'_k:P_{j_{k+1}}[n_{k+1}]\to P_{i_{k}}[m_{k}]$, when $\phi'_k$ is from $P_{j_{k+1}}[n_{k+1}]$ to $P_{i_{k}}[m_{k}]$.
\end{itemize}
More precisely, depending on the forms \eqref{eq:mor1} to \eqref{eq:mor5} of $f_\alpha$ in \Cref{cons:mor}, there are the following cases, where $\chi_{{\rl}}$ and $\psi_{{\rl}}$ are drawn in green. 
\begin{enumerate}
    \item In \eqref{eq:mor1}, it is direct to check that $(\xymatrix{\oX(\tsigma) \ar[r]^{f_{\rl}}& \oX(\tsigma')})= \oX(\tsigma \wedge_{{\rl}} \tsigma')$. We take $\chi_{{\rl}}=\psi_{{\rl}}=\id$.
   \item
 In \eqref{eq:mor2}, the dg $\Lambda$-module $\oX(\tsigma \wedge_{{\rl}} \tsigma')$ is
\begin{gather} \notag
\xymatrix@C=2.5pc{P_{j_{s+1}}{[n_{s+1}]} \ar@{-}[r]^(0.6){\phi'_{s+1}} &  \cdots \ar@{-}[r]^{\phi'_{t-1}} & P_{j_t}{[n_t]}}.
\end{gather}
In this case, $\chi_{{\rl}}$ is given by the following diagram
\begin{equation} \notag
 \xymatrix@C=2pc{ P_{i_1}{[m_1]} \ar[d]^{\id} \ar@{-}[r]^(0.6){\phi_1} & \cdots  \ar[d]_{\cdots} \ar@{-}[r]^(0.4){\phi_{s-1}} & P_{i_s}{[m_s]}  \ar[d]^{(-1)^{s-1}\id}\\
    P_{j_1}{[n_1]} \ar@{-}[r]^(0.6){\phi'_1} &  \cdots  \ar@{-}[r]^(0.4){\phi'_{s-1}} & P_{j_s}{[n_s]}   & P_{j_{s+1}}{[n_{s+1}]}  \ar@{-}[r]^(0.65){\phi'_{s+1}} \ar@*{[Green]}[d]^{\G{(-1)^s\id}}\ar[l]^{\phi'_s} & \cdots \ar@[Green][d]^{\G{\cdots}} \ar@{-}[r]^(0.4){\phi'_{t-1}} & P_{j_t}{[n_t]}  \ar@[Green][d]^{\G{(-1)^s\id}} \\
     & & & P_{j_{s+1}}{[n_{s+1}]} \ar@{-}[r]_(0.65){\phi'_{s+1}} &  \cdots \ar@{-}[r]_(0.4){\phi'_{t-1}} & P_{j_t}{[n_t]}
,}
\end{equation}
and $\psi_{{\rl}}$ is given by the following diagram
\begin{gather}  \notag
 \xymatrix@C=2pc{ P_{i_1}{[m_1]} \ar[d]_{\id} \ar@{-}[r]^(0.6){\phi_1} & \cdots  \ar[d]_{\cdots} \ar@{-}[r]^(0.4){\phi_{s-1}} & P_{i_s}{[m_s]}  \ar[d]_{(-1)^{s-1}\id}\\
    P_{j_1}{[n_1]} \ar@{-}[r]_(0.6){\phi'_1} & \cdots  \ar@{-}[r]_(0.4){\phi'_{s-1}} & P_{j_s}{[n_s]}   & P_{j_{s+1}}{[n_{s+1}]}  \ar@{-}[r]^(0.65){\phi'_{s+1}} \ar@{<-}@[Green][d]^{\G{(-1)^s\id}} \ar[l]|\hole^(0.6){\phi'_s} & \cdots \ar@{<-}@[Green][d]^{\G{\cdots}} \ar@{-}[r]^(0.4){\phi'_{t-1}} & P_{j_t}{[n_t]}  \ar@{<-}@[Green][d]^{\G{(-1)^s\id}} \\
     & & & P_{j_{s+1}}{[n_{s+1}]} \ar@[Green][luu]_(0.6){\G{\phi'_s}} \ar@{-}[r]_(0.65){\phi'_{s+1}} &  \cdots \ar@{-}[r]_(0.4){\phi'_{t-1}} & P_{j_t}{[n_t]}
.}
\end{gather}
\item In \eqref{eq:mor3}, the dg $\Lambda$-module $\oX(\tsigma \wedge_{{\rl}} \tsigma')$ is
\begin{gather} \notag
\xymatrix@C=3pc{P_{i_{t+1}}{[m_{t+1}]} \ar@{-}[r]^(0.6){\phi_{t+1}} &  \cdots \ar@{-}[r]^{\phi_{s-1}} & P_{i_s}{[m_s]}}.
\end{gather}
In this case, $\chi_{{\rl}}$ is given by the following diagram
\begin{gather} \notag
 \xymatrix@C=2pc{ P_{i_1}{[m_1]} \ar[d]^{\id} \ar@{-}[r]^(0.6){\phi_1} & \cdots  \ar[d]^{\cdots} \ar@{-}[r]^(0.4){\phi_{t-1}} & P_{i_t}{[m_t]}  \ar[d]^{(-1)^{t-1}\id} \ar[r]^(0.4){\phi_t}  & P_{i_{t+1}}{[m_{t+1}]} \ar@[Green][dd]^{\Gid}  \ar@{-}[r]^(0.65){\phi_{t+1}}   & \cdots \ar@[Green][dd]^{\G{\cdots}} \ar@{-}[r]^(0.4){\phi_{s-1}} & P_{i_s}{[m_s]} \ar@[Green][dd]^{\Gid} \\
    P_{j_1}{[n_1]} \ar@{-}[r]^(0.6){\phi'_1} & \cdots  \ar@{-}[r]^(0.4){\phi'_{t-1}} & P_{j_t}{[n_t]}  \ar@[Green][rd]_(0.4){\G{(-1)^{t}\phi_t}}   \\
     & & & P_{i_{t+1}}{[m_{t+1}]} \ar@{-}[r]_(0.65){\phi_{t+1}} &  \cdots \ar@{-}[r]_(0.4){\phi_{s-1}} & P_{i_s}{[m_s]}
,}
\end{gather}
and $\psi_{{\rl}}$ is given by the following diagram
\begin{gather}  \notag
        \xymatrix@C=2pc{ P_{i_1}{[m_1]} \ar[d]^{\id} \ar@{-}[r]^(0.6){\phi_1} &  \cdots  \ar[d]^{\cdots} \ar@{-}[r]^(0.4){\phi_{t-1}} & P_{i_t}{[m_t]}  \ar[d]^{(-1)^{t-1}\id} \ar[r]^(0.4){\phi_t}  & P_{i_{t+1}}{[m_{t+1}]} \ar@[Green]@{<-}[dd]^{\Gid}  \ar@{-}[r]^(0.65){\phi_{t+1}}   & \cdots \ar@[Green]@{<-}[dd]^{\G{\cdots}} \ar@{-}[r]^(0.4){\phi_{s-1}} & P_{i_s}{[m_s]} \ar@[Green]@{<-}[dd]^{\Gid} \\
    P_{j_1}{[n_1]} \ar@{-}[r]^{\phi'_1} &  \cdots  \ar@{-}[r]^{\phi'_{t-1}} & P_{j_t}{[n_t]}     \\
     & & & P_{i_{t+1}}{[m_{t+1}]} \ar@{-}[r]_(0.65){\phi_{t+1}} &  \cdots \ar@{-}[r]_(0.4){\phi_{s-1}} & P_{i_s}{[m_s]}
.}
\end{gather}
\item In \eqref{eq:mor4}, the dg $\Lambda$-module $\oX(\tsigma \wedge_{{\rl}} \tsigma')$ is
\begin{gather}\notag
\xymatrix@C=2.5pc{ P_{i_{k+1}}{[m_{k+1}]} \ar@{-}[r]^(0.65){\phi_{k+1}} & \cdots \ar@{-}[r]^{\phi_{s-1}} & P_{i_s}{[m_s]} \\
    P_{j_{k+1}}{[n_{k+1}]}  \ar@{-}[r]^(0.65){\phi'_{k+1}} \ar[u]^{\phi_{k} \circ \phi'_{k}} & \cdots \ar@{-}[r]^{\phi'_{t-1}} & P_{j_t}{[n_t]}
    .}
\end{gather}
In this case, $\chi_{{\rl}}$ is given by the following diagram
\begin{gather}  \notag
  \xymatrix@C=2pc{ \cdots  \ar[d]^{\cdots} \ar@{-}[r]^(0.35){\phi_{k-1}} & P_{i_{k}}{[m_{k}]}  \ar[d]^{(-1)^{k-1}\id} \ar[r]^(0.5){\phi_{k}} & P_{i_{k+1}}{[m_{k+1}]} \ar@{-}[r]^(0.6){\phi_{k+1}} & \cdots \ar@{-}[r]^(0.45){\phi_{s-1}} & P_{i_s}{[m_s]}  \\
    \cdots  \ar@{-}[r]^(0.35){\phi'_{k-1}} & P_{j_{k}}{[n_{k}]} \ar@[Green][rrd]_{\G{(-1)^{k}\phi_{k}}}   & P_{j_{k+1}}{[n_{k+1}]}  \ar@{-}[r]|\hole ^(0.6){\phi'_{k+1}} \ar[l]_(0.5){\phi'_{k}} & \cdots \ar@{-}[r]|\hole^(0.6){\phi'_{t-1}} & P_{j_t}{[n_t]}    \\
      & & & P_{i_{k+1}}{[m_{k+1}]} \ar@[Green]@{<-}[luu]_(0.25){\Gid} \ar@{-}[r]^(0.6){\phi_{k+1}} & \cdots \ar@[Green]@{<-}[luu]_(0.25){\G{\cdots}}
      \ar@{-}[r]^(0.25){{\phi_{s-1}}} & P_{i_s}{[m_s]} \ar@[Green]@{<-}[luu]_(0.25){\Gid}  \\
    & & & P_{j_{k+1}}{[n_{k+1}]} \ar@[Green]@{<-}[luu]^(0.4){\G{(-1)^{k}\id}} \ar@{-}[r]_(0.6){\phi'_{k+1}} \ar[u]_{\phi_{k} \circ \phi'_{k}} & \cdots \ar@[Green]@{<-}[luu]|\hole_(0.2){\G{\cdots}}  \ar@{-}[r]_{{\phi'_{t-1}}} & P_{j_t}{[n_t]} \ar@[Green]@{<-}[luu]|\hole_(0.2){\G{(-1)^{k}\id}}
,}
\end{gather}
and $\psi_{{\rl}}$ is given by the following diagram
 \begin{gather}  \notag
   \xymatrix@C=2pc{ \cdots  \ar[d]_{\cdots} \ar@{-}[r]^(0.34){\phi_{k-1}} & P_{i_{k}}{[m_{k}]}  \ar[d]_{(-1)^{k-1}\id} \ar[r]^(0.45){\phi_{k}} & P_{i_{k+1}}{[m_{k+1}]} \ar@{-}[r]^(0.6){\phi_{k+1}} & \cdots \ar@{-}[r]^(0.4){\phi_{s-1}} & P_{i_s}{[m_s]}  \\
     \cdots  \ar@{-}[r]_(0.35){\phi'_{k-1}} & P_{j_{k}}{[n_{k}]}    & P_{j_{k+1}}{[n_{k+1}]}  \ar@{-}[r]|\hole^(0.6){\phi'_{k+1}} \ar[l]^(0.5){\phi'_{k}} & \cdots \ar@{-}[r]|\hole^(0.3){\phi'_{t-1}} & P_{j_t}{[n_t]}    \\
      & & & P_{i_{k+1}}{[m_{k+1}]} \ar@[Green][luu]_(0.7){\Gid} \ar@{-}[r]^(0.6){\phi_{k+1}} & \cdots \ar@[Green][luu]_(0.7){\G{\cdots}} \ar@{-}[r]^(0.25){\phi_{s-1}} & P_{i_s}{[n_s]} \ar@[Green][luu]_(0.7){\Gid}
     \\
    & & & P_{j_{k+1}}{[n_{k+1}]} \ar@/^/@[Green][lluuu]|(0.35){\G{\phi'_{k}}} \ar@[Green][luu]|(0.75){\G{(-1)^{k}\id}} \ar@{-}[r]_(0.6){\phi'_{k+1}} \ar[u]_{\phi_{k} \circ \phi'_{k}} & \cdots \ar@[Green][luu] |\hole|(0.75){\G{\cdots}} \ar@{-}[r]_{\phi'_{t-1}} & P_{j_t}{[n_t]} \ar@[Green][luu]|\hole |(0.75){\G{(-1)^{k}\id}}
.}
\end{gather}
\item In \eqref{eq:mor5},  the dg $\Lambda$-module $\oX(\tsigma \wedge_{{\rl}} \tsigma')$ is
\begin{gather}\notag
\xymatrix@C=2.5pc{ P_{i_{k+1}}{[m_{k+1}]} \ar@{-}[r]^(0.6){\phi_{k+1}} & \cdots \ar@{-}[r]^(0.45){\phi_{s-1}} & P_{i_s}{[m_s]} \\
    P_{j_{k+1}}{[n_{k+1}]}  \ar@{-}[r]^(0.6){\phi'_{k+1}} \ar@{<-}[u]^{\phi} & \cdots \ar@{-}[r]^(0.4){\phi'_{t-1}} & P_{j_t}{[n_t]}
    .}
\end{gather}
The morphism $\phi_k$ between $P_{i_{k+1}}[m_{k+1}]$ and $P_{i_{k}}[m_{k}]$ has the same direction of the morphism $\phi'_k$ between $P_{j_{k+1}}[n_{k+1}]$ and $P_{j_{k}}[n_{k}]$. There are two cases.
\begin{enumerate}
    \item Case that the direction of $\phi_k$ is from $P_{i_{k+1}}[m_{k+1}]$ to $P_{i_{k}}[m_{k}]$.
In this case, $\chi_{{\rl}}$ is given by the following diagram
 \begin{gather} \notag
  \xymatrix@C=2pc{  \cdots  \ar[d]_{\cdots} \ar@{-}[r]^(0.35){\phi_{k-1}} & P_{i_{k}}{[m_{k}]}  \ar[d]_{(-1)^{k-1}\id}  \ar@{<-}[r]^(0.5){\phi_{k}} & P_{i_{k+1}}{[m_{k+1}]} \ar[d]_{(-1)^{k}\phi} \ar@{-}[r]^(0.6){\phi_{k+1}} & \cdots \ar@{-}[r]^(0.4){\phi_{s-1}} & P_{i_s}{[m_s]}  \\
    \cdots  \ar@{-}[r]_(0.5){\phi'_{k-1}} & P_{j_{k}}{[n_{k}]}  & P_{j_{k+1}}{[n_{k+1}]}  \ar@{-}[r]|\hole ^(0.6){\phi'_{k+1}} \ar[l]^(0.5){\phi'_{k}} & \cdots \ar@{-}[r]|\hole^(0.6){\phi'_{t-1}} & P_{j_t}{[n_t]}    \\
      & & & P_{i_{k+1}}{[m_{k+1}]} \ar@[Green]@{<-}[luu]_(0.25){\Gid} \ar@{-}[r]^(0.6){\phi_{k+1}} & \cdots \ar@[Green]@{<-}[luu]_(0.25){\G{\cdots}}
      \ar@{-}[r]^(0.25){\phi_{s-1}} & P_{i_s}{[m_s]} \ar@[Green]@{<-}[luu]_(0.25){\Gid}  \\
    & & & P_{j_{k+1}}{[n_{k+1}]} \ar@[Green]@{<-}[luu]^(0.4){\G{(-1)^{k}\id}} \ar@{-}[r]_(0.6){\phi'_{k+1}} \ar@{<-}[u]_{\phi} & \cdots \ar@[Green]@{<-}[luu]|\hole_(0.2){\G{\cdots}}
    \ar@{-}[r]_(0.45){\phi'_{t-1}} & P_{j_t}{[n_t]} \ar@[Green]@{<-}[luu]|\hole _(0.2){\G{(-1)^{k}\id}}
,}
\end{gather}
and $\psi_{{\rl}}$ is given by the following diagram
 \begin{gather} \notag
  \xymatrix@C=2pc{ \cdots  \ar[d]_{\cdots} \ar@{-}[r]^(0.4){\phi_{k-1}} & P_{i_{k}}{[m_{k}]}  \ar[d]_{(-1)^{k-1}\id}  \ar@{<-}[r]^(0.55){\phi_{k}} & P_{i_{k+1}}{[m_{k+1}]} \ar[d]_{(-1)^{k}\phi} \ar@{-}[r]^(0.55){\phi_{k+1}} & \cdots \ar@{-}[r]^(0.4){\phi_{s-1}} & P_{i_s}{[m_s]}  \\
     \cdots  \ar@{-}[r]_(0.5){\phi'_{k-1}} & P_{j_{k}}{[n_{k}]}  & P_{j_{k+1}}{[n_{k+1}]}  \ar@{-}[r]|\hole ^(0.65){\phi'_{k+1}} \ar[l]^(0.5){\phi'_{k}} & \cdots \ar@{-}[r]|\hole^(0.6){\phi'_{t-1}} & P_{j_t}{[n_t]}    \\
      & & & P_{i_{k+1}}{[m_{k+1}]} \ar@[Green][luu]|(0.7){\Gid} \ar@{-}[r]^(0.6){\phi_{k+1}} & \cdots \ar@[Green][luu]|(0.7){\G{\cdots}} \ar@{-}[r]^(0.25){{\phi_{s-1}}} & P_{i_s}{[m_s]} \ar@[Green][luu]|(0.7){\Gid}  \\
    & & & P_{j_{k+1}}{[n_{k+1}]} \ar@/^/@[Green][lluuu]|(0.3){\G{\phi'_{k}}} \ar@[Green][luu]|(0.7){\G{(-1)^{k}\id}} \ar@{-}[r]_(0.65){\phi'_{k+1}} \ar@{<-}[u]_{\phi} & \cdots \ar@[Green][luu]|\hole|(0.7){\G{\cdots}}
    \ar@{-}[r]_{\phi'_{t-1}} & P_{j_t}{[n_t]} \ar@[Green][luu]|\hole|(0.7){\G{(-1)^{k}\id}}
.}
\end{gather}
\item Case that the direction of $\phi_k$ is from $P_{i_{k}}[m_{k}]$ to  $P_{i_{k+1}}[m_{k+1}]$.
In this case, $\chi_{{\rl}}$ is given by the following diagram
\begin{gather}  \notag
\xymatrix@C=2.5pc{\cdots  \ar[d]_{\cdots} \ar@{-}[r]^(0.35){\phi_{k-1}} & P_{i_{k}}{[m_{k}]}  \ar[d]_{(-1)^{k-1}\id}  \ar[r]^{\phi_{k}} & P_{i_{k+1}}{[m_{k+1}]} \ar[d]_{(-1)^{k}\phi} \ar@{-}[r]^(0.6){\phi_{k+1}} & \cdots \ar@{-}[r]^{\phi_{s-1}} & P_{i_s}{[m_s]}  \\
     \cdots  \ar@{-}[r]_(0.4){\phi'_{k-1}} & P_{j_{k}}{[n_{k}]} \ar@[Green][rrd]_{\G{(-1)^{k}\phi_{k}}} & P_{j_{k+1}}{[n_{k+1}]}  \ar@{-}[r]|\hole ^(0.6){\phi'_{k+1}} \ar@{<-}[l]_(0.6){\phi'_{k}} & \cdots \ar@{-}[r]|\hole^(0.6){\phi'_{t-1}} & P_{j_t}{[n_t]}    \\
 & & & P_{i_{k+1}}{[m_{k+1}]} \ar@[Green]@{<-}[luu]_(0.2){\Gid} \ar@{-}[r]^(0.6){\phi'_{k+1}} & \cdots \ar@[Green]@{<-}[luu]_(0.2){\G{\cdots}} \ar@{-}[r]^(0.6){\phi'_{s-1}} & P_{i_s}{[m_s]} \ar@[Green]@{<-}[luu]_(0.2){\Gid}  \\
 & & & P_{j_{k+1}}{[n_{k+1}]} \ar@[Green]@{<-}[luu]^(0.4){\G{(-1)^{k}\id}} \ar@{-}[r]_{\phi'_{k+1}} \ar@{<-}[u]_{\phi} & \cdots \ar@[Green]@{<-}[luu] |\hole _(0.2){\G{\cdots}} \ar@{-}[r]_(0.6){\phi'_{t-1}} & P_{j_t}{[n_t]} \ar@[Green]@{<-}[luu]|\hole _(0.2){\G{(-1)^{k}\id}}
,}
\end{gather}
and $\psi_{{\rl}}$ is given by the following diagram
\begin{gather}  \notag
\xymatrix@C=2.5pc{  \cdots  \ar[d]_{\cdots} \ar@{-}[r]^(0.35){\phi_{k-1}} & P_{i_{k}}{[m_{k}]}  \ar[d]_{(-1)^{k-1}\id}  \ar[r]^(0.5){\phi_{k}} & P_{i_{k+1}}{[m_{k+1}]} \ar[d]_{(-1)^{k}\phi} \ar@{-}[r]^(0.6){\phi_{k+1}} & \cdots \ar@{-}[r]^(0.4){\phi_{s-1}} & P_{i_s}{[m_s]}  \\
   \cdots  \ar@{-}[r]_{\phi'_{k-1}} & P_{j_{k}}{[n_{k}]}  & P_{j_{k+1}}{[n_{k+1}]}  \ar@{-}[r]|\hole ^(0.6){\phi'_{k+1}} \ar@{<-}[l]^{\phi'_k} & \cdots \ar@{-}[r]|\hole^(0.6){\phi'_{t-1}} & P_{j_t}[n_t]    \\
      &&& P_{i_{k+1}}{[m_{k+1}]} \ar@[Green][luu]_(0.2){\Gid} \ar@{-}[r]^(0.6){\phi_{k+1}} & \cdots \ar@[Green][luu]_(0.2){\G{\cdots}} \ar@{-}[r]^(0.25){\phi_{s-1}} & P_{i_s}{[m_s]} \ar@[Green][luu]_(0.2){\Gid}  \\
    & & & P_{j_{k+1}}{[n_{k+1}]} \ar@[Green][luu]^(0.35){\G{(-1)^{k}\id}} \ar@{-}[r]_{\phi'_{k+1}} \ar@{<-}[u]_{\phi} & \cdots \ar@[Green][luu] |\hole_(0.2){\G{\cdots}} \ar@{-}[r]_(0.6){\phi'_{t-1}} & P_{j_t}{[n_t]} \ar@[Green][luu]|\hole_(0.2){\G{(-1)^{k}\id}}
.}
\end{gather}
\end{enumerate}
\end{enumerate}
It can be verified that they are homotopy equivalences and homotopy inverses of each other.

By combining the constructions of $\chi_{{\rl}}$, $\psi_{{\rl}}$ and the morphisms between $\oX(\tsigma)$ and $\oX(\tsigma')$ in \Cref{cons:mor}, \eqref{eq:compo1} and \eqref{eq:compo2} are obtained.
\end{proof}

Each dg thread \eqref{eq:thread} gives rise to a dg $\Lambda$-module
\begin{gather} \label{eq:dgm from thread}
  \xymatrix{ \oX(\tsigma_1) \ar@{-}[r]^{f_{{\rl}_{1}}} & \oX(\tsigma_2) \ar@{-}[r]^{f_{{\rl}_{2}}} & \cdots  \ar@{-}[r]^(0.4){f_{{\rl}_{{n-1}}}} & {\oX(\tsigma_n)}
    }.
\end{gather}
%Moreover, for a dg thread, the sequence in \eqref{eq:dgm from thread}  is a dg $\Lambda$-module.

Our main theorem in this subsection is the following.
\begin{theorem}\label{thm:smoothing}
For a given dg thread of graded open arcs as described in \eqref{eq:thread}, there is an isomorphism in $\per \Lambda$
\begin{gather}\label{eq:isosmoothing}
\xymatrix{ (\oX(\tsigma_1) \ar@{-}[r]^{f_{{\rl}_{1}}} &\oX(\tsigma_2) \ar@{-}[r]^{f_{{\rl}_{2}}} & \cdots \ar@{-}[r]^(0.4){f_{{\rl}_{{n-1}}}} & {\oX(\tsigma_n)})}\cong \oX(\tsigma_{1} \wedge_{{\rl}_{1}}\tsigma_{2} \wedge_{{\rl}_{2}} \cdots \wedge_{{\rl}_{{n-1}}} \tsigma_{n}).
\end{gather}
\end{theorem}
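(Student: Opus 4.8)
The plan is to induct on the length $n$ of the dg thread \eqref{eq:thread}. The cases $n\le 2$ are immediate: for $n=1$ there is nothing to prove, and for $n=2$ the homotopy equivalence $\chi_{\rl_1}$ of \Cref{lem:compo} gives \eqref{eq:isosmoothing}, since a homotopy equivalence of dg $\Lambda$-modules is an isomorphism in $\per\Lambda$.

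For the inductive step, let $n\ge 3$ and assume the statement for all shorter dg threads. Put $\tsigma'' := \tsigma_{n-1}\wedge_{\rl_{n-1}}\tsigma_n$ and let $\rl_{n-2}'$ be the oriented intersection between $\tsigma_{n-2}$ and $\tsigma''$ induced by $\rl_{n-2}$, as in \Cref{lem:compo} and \Cref{fig:induced oint}; the thread condition at $\tsigma_{n-1}$ (that $\rl_{n-2}$ and $\rl_{n-1}$ do not share a common end segment of $\tsigma_{n-1}$) is precisely what makes $\rl_{n-2}'$ well defined. The first step is to check that
\[
\xymatrix@C=1.6pc{\tsigma_1\ar@{-}[r]^{\rl_1}&\cdots\ar@{-}[r]^{\rl_{n-3}}&\tsigma_{n-2}\ar@{-}[r]^{\rl_{n-2}'}&\tsigma''}
\]
is again a dg thread, of length $n-1$, whose iterated smoothing arc coincides with $\tsigma_1\wedge_{\rl_1}\cdots\wedge_{\rl_{n-1}}\tsigma_n$. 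The smoothing statement is immediate from the definition of smoothing along a thread. For the dg-thread conditions: the thread conditions at the untouched vertices are unchanged, the one at $\tsigma_{n-2}$ follows from the local picture in \Cref{fig:induced oint}, the index of $\rl_{n-2}'$ is again $1$, and the vanishing of the composition of $f_{\rl_{n-3}}$ and $f_{\rl_{n-2}'}$ (when they point the same way) follows from \Cref{cons:mor} together with the corresponding vanishing for $f_{\rl_{n-3}}$ and $f_{\rl_{n-2}}$.

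The second step is to show that the dg $\Lambda$-module \eqref{eq:dgm from thread} of the original thread is homotopy equivalent to that of the reduced thread. I would extend $\chi_{\rl_{n-1}}$ (resp.\ $\psi_{\rl_{n-1}}$) from \Cref{lem:compo} by the identity on the summands $\oX(\tsigma_1),\dots,\oX(\tsigma_{n-2})$, and extend the homotopies witnessing $\chi_{\rl_{n-1}}\psi_{\rl_{n-1}}\simeq\id$ and $\psi_{\rl_{n-1}}\chi_{\rl_{n-1}}\simeq\id$ by zero there. Since $\chi_{\rl_{n-1}}$ and $\psi_{\rl_{n-1}}$ are supported on the last two summands, checking that these extensions are closed degree-$0$ morphisms reduces to a single identity at the junction between $\oX(\tsigma_{n-2})$ and $\oX(\tsigma_{n-1})$: this is \eqref{eq:compo1} if $\rl_{n-2}$ is oriented $\tsigma_{n-2}\to\tsigma_{n-1}$, and \eqref{eq:compo2} if it is oriented $\tsigma_{n-1}\to\tsigma_{n-2}$. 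Hence the two dg modules are homotopy equivalent, so isomorphic in $\per\Lambda$; applying the induction hypothesis to the reduced thread then yields \eqref{eq:isosmoothing}.

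I expect the main obstacle to be exactly this second step together with the dg-thread check in the first: verifying that ``extend $\chi_{\rl_{n-1}}$ by the identity'' really is a chain map, i.e.\ that it is controlled entirely by \eqref{eq:compo1}/\eqref{eq:compo2}, requires care about the two possible orientations of $\rl_{n-2}$ and about the signs appearing in \Cref{cons:mor} and \Cref{lem:compo}; and confirming that the reduced zigzag still meets the index-$1$ and composition-vanishing requirements of a dg thread is where the combinatorics of \Cref{cons:mor} genuinely enters.
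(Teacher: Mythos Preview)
Your induction scheme and the reduction to a shorter thread by smoothing the last two arcs is exactly the paper's proof. The only difference is in how the inductive step is executed: rather than extending \emph{both} $\chi_{\rl_{n-1}}$ and $\psi_{\rl_{n-1}}$ by the identity and arguing they are mutually homotopy inverse, the paper splits on the orientation of $\rl_{n-2}$ and in each case extends only \emph{one} of them---$\chi_{\rl_{n-1}}$ when $\rl_{n-2}$ points $\tsigma_{n-2}\to\tsigma_{n-1}$, and $\psi_{\rl_{n-1}}$ (with $\pm\id$ on the earlier summands) in the other case. The single relation \eqref{eq:compo1} (resp.\ \eqref{eq:compo2}) then makes the extended map a dg $\Lambda$-module morphism; since its components are identities together with a homotopy equivalence, it is a quasi-isomorphism, hence an isomorphism in $\per\Lambda$.

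This matters because your stronger claim---that \emph{both} extensions are closed---does not follow from \eqref{eq:compo1} and \eqref{eq:compo2} alone: each of these pins down only one of the two maps at the junction, and for the other you would need, e.g., $\psi_{\rl_{n-1}}\chi_{\rl_{n-1}}\circ\emb=\emb$ on the nose, which holds only up to homotopy. One closed map is enough, so your argument goes through once you drop the two-sided homotopy-equivalence packaging and argue via quasi-isomorphism instead, as the paper does. Your check that the reduced sequence is again a dg thread is also handled more directly there: the vanishing $f_{\rl'_{n-2}}\circ f_{\rl_{n-3}}=0$ (when both point the same way) is read off immediately from the factorisation \eqref{eq:compo1} or \eqref{eq:compo2} together with the hypothesis $f_{\rl_{n-2}}\circ f_{\rl_{n-3}}=0$, without appealing to \Cref{cons:mor}.
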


 %\Cref{thm:smoothing} will be proved at the end of this section. 
Taking $n=2$ in \Cref{thm:smoothing}, similar results can be found in \cite{QZ19} and \cite{IQZ} in different settings.

%As a direct consequence of \Cref{lem:compo}, we have the following corollary.
%\begin{corollary}\label{the morphism is not null-homotopy}
%The morphism $f_{\rl}$ is not null-homotopy.
%\end{corollary}
%\begin{proof}
%If the morphism $f_{{\rl}}$ is null-homotopy, then
%$$\oX(\tsigma) \oplus \oX(\tsigma') \cong \oX(\tsigma \wedge_{{\rl}} \tsigma').$$
%By \cite{QZZ}, the object $\oX(\tsigma \wedge_{{\rl}} \tsigma')$ is indecomposable, which is a contradiction. Hence $f_{{\rl}}$ is not null-homotopy.
%\end{proof}
%Next we give the proof of \Cref{thm:smoothing}.

\begin{proof}
We use induction on $n$. The case $n=1$ is trivial. The case $n=2$ follows directly from \Cref{lem:compo}. Now assume that the required isomorphism exists for the case $n-1$, and we consider the case $n$ for $n \geq 3$.
%Suppose that there exists a dg thread as shown in \eqref{eq:thread}.
Denote by $\rl_{n-2}'$ the oriented intersection between $\tsigma_{n-2}$ and $\tsigma_{n-1}\wedge_{\rl_{n-1}}\tsigma_{n}$, induced by $\rl_{n-2}$. Then we have a thread
\begin{equation}\label{eq:indthread}
   \xymatrix{ \tsigma_1 \ar@{-}[r]^{{\rl}_{1}} & \cdots \ar@{-}[r]^{{{\rl}_{{n-3}}}} & \tsigma_{n-2}  \ar@{-}[r]^(0.35){{{\rl}'_{{n-2}}}} & {\tsigma_{n-1} \wedge_{{\rl}_{{n-1}}}\tsigma_n}
    .}
\end{equation}
Depending on the orientation of $\rl_{n-2}$, there are the following two cases.
\begin{enumerate}
    \item The case $\tsigma_{n-2}\xrightarrow{\rl_{n-2}}\tsigma_{n-3}$. In this case, $\rl'_{n-2}$ points to the right. then by \Cref{lem:compo}, we have
    \begin{equation}\label{eq:lin1}
        f_{{\rl}'_{n-2}}=\chi_{{\rl}_{n-1}} \circ
\emb(f_{{\rl}_{n-1}},\oX(\tsigma_{n-1})) \circ {f}_{{\rl}_{n-2}}.
    \end{equation}
    Hence if $\rl_{n-3}$ also points to the right, we have $f_{{\rl}'_{n-2}}\circ f_{{\rl}_{n-3}}=0$. This implies that \eqref{eq:indthread} is a dg thread. Consider the following diagram
    \begin{gather} \label{eq:pfsmooth1}
\xymatrix@C=4pc{\oX(\tsigma_1) \ar[d]^{\id} \ar@{-}^{f_{{\rl}_{1}}}[r] & \cdots \ar[d]^{\cdots}\ar@{-}^{f_{{\rl}_{{n-3}}}}[r] &  \oX(\tsigma_{n-2}) \ar[d]_{\id} \ar[r]^(0.4){\emb'} & \Cone( f_{{\rl}_{{{n-1}}}}) \ar[d]^{\chi_{{\rl}_{{n-1}}}} \\
\oX(\tsigma_1) \ar@{-}^{f_{{\rl}_{1}}}[r] & \cdots \ar@{-}^{f_{{\rl}_{{n-3}}}}[r] &  \oX(\tsigma_{n-2}) \ar[r]^(0.4){f_{{\rl}'_{{{n-2}}}}} &  \oX(\tsigma_{n-1} \wedge_{{\rl}_{{n-1}}} \tsigma_n)
,}
\end{gather}
where $\emb'=\emb(f_{{\rl}_{{n-1}}},\oX(\tsigma_{n-1})) \circ {f}_{\rl_{{n-2}}}$. By \eqref{eq:lin1}, this diagram commutes and hence can be viewed as a dg $\Lambda$-module morphism from the first row to the second row. Moreover, since $\chi_{\rl_{n-1}}$ is a homotopy equivalence by \Cref{lem:compo}, this morphism is a quasi-isomorphism. Note that the first row of \eqref{eq:pfsmooth1} is the same as the left-hand side of \eqref{eq:isosmoothing}, while the second row, by applying the induction assumption to \eqref{eq:indthread}, is isomorphic to the right-hand side of \eqref{eq:isosmoothing}. Therefore, we are done.
\item The case $\tsigma_{n-2}\xleftarrow{\rl_{n-2}}\tsigma_{n-3}$. The proof is similar, after replacing \eqref{eq:lin1} with
\begin{equation}\label{eq:lin2}
        f_{{\rl}'_{n-2}}={f}_{{\rl}_{n-2}} \circ
\pro(f_{{\rl}_{n-1}},\oX(\tsigma_{n-1})) \circ \psi_{{\rl}_{n-1}},
    \end{equation}
    and replacing the diagram \eqref{eq:pfsmooth1} by
    \begin{gather} \label{eq:pfsmoothing2}
\xymatrix@C=4pc{\oX(\tsigma_1) \ar@{<-}[d]^{\pm\id} \ar@{-}^{f_{{\rl}_{1}}}[r] & \cdots \ar@{<-}[d]^{\cdots}\ar@{-}^{f_{{\rl}_{{n-3}}}}[r] &  \oX(\tsigma_{n-2}) \ar@{<-}[d]_{\pm\id} \ar@{<-}[r]^(0.4){\pro'} & \Cone( f_{{\rl}_{{{n-1}}}}) \ar@{<-}[d]^{\psi_{{\rl}_{{n-1}}}} \\
\oX(\tsigma_1) \ar@{-}^{f_{{\rl}_{1}}}[r] & \cdots \ar@{-}^{f_{{\rl}_{{n-3}}}}[r] &  \oX(\tsigma_{n-2}) \ar@{<-}[r]^(0.4){f_{{\rl}'_{{{n-2}}}}} &  \oX(\tsigma_{n-1} \wedge_{{\rl}_{{n-1}}} \tsigma_n)
,}
\end{gather}
where $\pro'={f}_{{\rl}_{{n-2}}} \circ \pro(f_{{\rl}_{{n-1}}},\oX(\tsigma_{n-1}))$.
\end{enumerate}

\end{proof}
\subsection{Independence}\label{independence}\

Let $\mfA'=\{\talpha_1',\cdots,\talpha_n'\}$ be another full formal open arc system on $\mfS^{\lambda}$. Since both $\per\Lambda_{\mfA}$ and $\per\Lambda_{\mfA'}$ are the topological Fukaya category of $\mfS^{\lambda}$, see \cite{HKK}, there exists a triangle equivalence $\per\Lambda_{\mfA} \xrightarrow{\simeq}\per\Lambda_{\mfA'}$. This subsection is devoted to, under a certain condition, constructing explicitly such an equivalence which is compatible with \Cref{cons:obj}.

%Recall we have fixed a full formal open arc system $\mfU=\{\talpha_1,\cdots,\talpha_n\}$, and we denote by $\Lambda_{\mfU}$ the graded gentle algebra obtained from $\mfU$. Furthermore, let $\mfU'=\{\talpha_1',\cdots,\talpha_n'\}$ be another full formal open arc system, and we denote by $\Lambda_{\mfU'}$ the graded gentle algebra obtained from $\mfU'$. 

\begin{definition}\label{def:str f}
We call $\mfU'$ \emph{strongly formal with respect to $\mfU$} provided that for any two oriented intersections $\rl,\rl'$, which are two adjacent inner angles of the same $\mfU'$-polygon, see \Cref{fig:strong}, we have $f_{{\rl'}} \circ f_{{\rl}}=0$.
%in $\mcC_{dg}(\Lambda_{\mfU})$ provided that there are two oriented intersections $\talpha'_i \xrightarrow{\rl_1}\talpha'_j$ and $\talpha'_j\xrightarrow{\rl_2}\talpha'_k$, such that $\rl_1$ and $\rl_2$ are adjacent interior angles of a polygon cut by $\mfA'$.

\begin{figure}[htpb]
\begin{tikzpicture}
\draw[thick](-4,0) to (-1,0)
(0,0) to (3,0);
\draw[cyan,thick]
(-2.5,0) to [out=50,in=-180] (-0.5,0.7) to [out=0,in=130]
(1.5,0)
(-2.5,0) to (-3.5,2.5)
(1.5,0) to (2.5,2.5);
\draw[cyan]
(-3,2)node{$\tgamma'_i$}
(-0.5,1)node{$\tgamma'_j$}
(2,2)node{$\tgamma'_k$};
\draw[thick]
(-0.5,2.5)node{$f_{{\rl'}} \circ f_{\rl}=0$.};
\draw[thick,Green,->-=0.5]
(-2.8,0.7) to [out=30] (-1.8,0.5);
\draw[Green](-2.2,1.1)node{${\rl}$};
\draw[thick,Green,-<-=0.5]
(1.8,0.7) to [out=-200,in=50] (0.8,0.5);
\draw[Green](1.35,1.1)node{${\rl'}$};
\fill[fill=gray!10]
(-4,0) rectangle (-1,-0.3)
(0,0) rectangle (3,-0.3);
\draw(-2.5,0)\bpt
(1.5,0)\bpt;
\end{tikzpicture}
\caption{Condition of strongly formal arc systems.}
\label{fig:strong}
\end{figure}
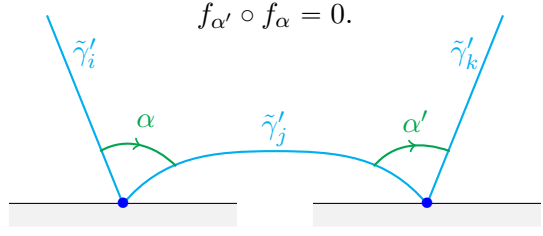
\end{definition}

Let $\oX(\mfA')=\oX(\talpha'_1)\oplus \cdots \oplus \oX(\talpha'_n)$ and denote by $\thick_{\mcD(\Lambda_{\mfA})}(\oX(\mfA'))$ the smallest thick subcategory of $\mcD(\Lambda_{\mfA})$ containing $\oX(\mfA')$.

\begin{remark}\label{rmk:thread1}
    Suppose $\mfA'$ is strongly formal with respect to $\mfU$. For any graded open arc $\tsigma$, as in \Cref{exm:thread} (but replacing $\mfU$ with $\mfU'$), there is a dg thread of graded open arcs in $\mfU'$ 
    \begin{equation}\label{eq:thread1}
    \xymatrix{
    \tgamma_{i_1}'[m_1]
    \ar@{-}[r]^{\rl_1} &
    \tgamma_{i_2}'[m_2]
    \ar@{-}[r]^{\rl_2} &
    \cdots
    \ar@{-}[r]^(0.35){\rl_{r-1}} &
    \tgamma_{i_r}'[m_r],
    }
\end{equation}
with each $\rl_i$ of index 1, and such that
$\tsigma=\talpha'_{i_1}[m_1]\wedge_{{\rl}_1}\talpha'_{i_2} [m_2]\wedge_{{\rl}_2} \cdots \wedge_{{\rl}_{r-1}} \talpha'_{i_r}[m_r]$. Since $\mfA'$ is strongly formal with respect to $\mfU$, the thread \eqref{eq:thread1} is a dg thread. Therefore, by \Cref{thm:smoothing}, we have an isomorphism
\begin{equation}\label{eq:cong1}
\oX(\tsigma)\cong\left(\xymatrix{
\oX(\tgamma_{i_1}')[m_1]
\ar@{-}[r]^{f_{\rl_1}} &
\oX(\tgamma_{i_2}')[m_2]
\ar@{-}[r]^(0.65){f_{\rl_2}} &
\cdots
\ar@{-}[r]^(0.35){f_{\rl_{r-1}}} &
\oX(\tgamma_{i_r}')[m_r]
}\right).
\end{equation}
In particular, we have $\oX(\tsigma)\in\thick_{\mcD(\Lambda_{\mfA})}(\oX(\mfA'))$. This implies 
\begin{equation}\label{eq:=1}
\thick_{\mcD(\Lambda_{\mfA})}(\oX(\mfA'))=\per\Lambda_{\mfA}.
\end{equation}
\end{remark}

Denote by  $\Lambda'=\mcHom_{\Lambda}(\oX(\mfA'),\oX(\mfA'))$. The following lemma is a standard trick on $A_{\infty}$-algebra, cf. \cite{Ke01}. We reduce it to the dg setting here.

\begin{lemma}\label{lem:quasi}
Suppose $\mfA'$ is strongly formal with respect to $\mfU$. Then $\Lambda_{\mfU'}=H^*(\Lambda')$ and there is a dg algebra quasi-isomorphism
$$q_{\mfA'}:H^*(\Lambda') \rightarrow \Lambda',$$
such that $q_{\mfA'}(\bar{f}_{\rl})=f_{\rl}$ for any oriented intersection $\rl$ between arcs in $\mfA'$, where $\bar{f}_{\rl}$ denotes the image of $f_{\rl}\in\Lambda'$ in $H^\ast(\Lambda')$.
\end{lemma}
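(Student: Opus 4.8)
The plan is to prove the two assertions in turn; the only substantive inputs are \Cref{thm:QZZ}, \Cref{lem:comp}, and the strong-formality hypothesis. I would begin by identifying $H^*(\Lambda')$ with $\Lambda_{\mfU'}$. Since each $\oX(\talpha_i')$ is a finite iterated extension of shifts of the summands $P_k$ of $\Lambda$, it is $\oK$-projective, hence so is $\oX(\mfA')$; therefore $\Lambda'=\mcHom_\Lambda(\oX(\mfA'),\oX(\mfA'))$ already computes morphisms in $\per\Lambda$, and $H^\rho(\Lambda')=\bigoplus_{i,j}\Hom_{\per\Lambda}(\oX(\talpha_i'),\oX(\talpha_j')[\rho])$. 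By \Cref{thm:QZZ}, the classes $\bar f_{\rl}$, as $\rl$ ranges over the oriented intersections of index $\rho$ between arcs of $\mfU'$, form a $\k$-basis of $H^\rho(\Lambda')$. Since the arcs of $\mfU'$ are simple and pairwise non-crossing, every such $\rl$ sits at an open marked point and is, uniquely, a composite of inner angles of $\mfU'$-polygons; by \Cref{rmk:rebuilt} these composites are precisely the path basis of $\Lambda_{\mfU'}$, the index of $\rl$ being the degree. Hence $\rl\mapsto\bar f_{\rl}$ is a degree-preserving $\k$-linear bijection $\Lambda_{\mfU'}\to H^*(\Lambda')$, and by \Cref{lem:comp} it carries a composite of oriented intersections to the product of the corresponding classes, so it is an isomorphism of graded algebras. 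I henceforth use it to identify $H^*(\Lambda')=\Lambda_{\mfU'}$.

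Next I would construct $q_{\mfA'}$. Write $\Lambda_{\mfU'}=\k Q_{\mfU'}/\langle I_{\mfU'}\rangle$. By the universal property of the path algebra there is a homomorphism of algebras $\widetilde q\colon\k Q_{\mfU'}\to\Lambda'$ determined by $e_i\mapsto\id_{\oX(\talpha_i')}$ and $p_{\rl}\mapsto f_{\rl}$ for each arrow $p_{\rl}$ of $Q_{\mfU'}$, that is, each inner angle of a $\mfU'$-polygon. Every generator of $I_{\mfU'}$ has the form $p_{\rl'}p_{\rl}$ with $\rl,\rl'$ adjacent inner angles of one $\mfU'$-polygon, and $\widetilde q(p_{\rl'}p_{\rl})=f_{\rl'}\circ f_{\rl}=0$ precisely because $\mfA'$ is strongly formal with respect to $\mfU$ (\Cref{def:str f}). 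Thus $\widetilde q$ descends to an algebra homomorphism $q_{\mfA'}\colon\Lambda_{\mfU'}\to\Lambda'$. Each $f_{\rl}$ is a cocycle of $\Lambda'$, and a product of cocycles in a dg algebra is again a cocycle, so $q_{\mfA'}$ takes values in $Z^*(\Lambda')$; as the source carries the zero differential, $q_{\mfA'}$ is a morphism of dg algebras. Finally, for an arbitrary oriented intersection $\rl$ between arcs of $\mfU'$, writing it as a composite of inner angles and applying \Cref{lem:comp} gives $q_{\mfA'}(\bar f_{\rl})=f_{\rl}$, the asserted compatibility.

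It then remains to check that $q_{\mfA'}$ is a quasi-isomorphism, which is immediate: the induced map $H^*(q_{\mfA'})\colon\Lambda_{\mfU'}=H^*(\Lambda')\to H^*(\Lambda')$ sends the basis element $\bar f_{\rl}$ to the cohomology class of $q_{\mfA'}(\bar f_{\rl})=f_{\rl}$, namely $\bar f_{\rl}$ itself, hence is the identity. I expect the main obstacle to be the first step: fixing the identification $H^*(\Lambda')\cong\Lambda_{\mfU'}$ compatibly with all the sign and orientation conventions of \Cref{cons:mor} and \Cref{lem:comp}, and verifying that the basis of $H^*(\Lambda')$ supplied by \Cref{thm:QZZ} (indexed by all oriented intersections between arcs of $\mfU'$) matches the path basis of $\Lambda_{\mfU'}$ under the operation of decomposing an intersection into inner angles. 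Once this dictionary is in place the rest is formal; the one essential use of the hypothesis is that strong formality makes $\widetilde q$ annihilate $I_{\mfU'}$ on the nose rather than only up to homotopy, yielding an honest dg algebra quasi-isomorphism $H^*(\Lambda')\to\Lambda'$ instead of merely the $A_\infty$-quasi-isomorphism produced in general by the homotopy transfer theorem.
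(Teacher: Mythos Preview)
Your proposal is correct and follows essentially the same approach as the paper: use \Cref{thm:QZZ} to get the basis $\{\bar f_{\rl}\}$ of $H^*(\Lambda')$, use \Cref{lem:comp} together with strong formality to see that the set $\{f_{\rl}\}$ is closed under composition (composable angles yield $f$ of the composite, adjacent inner angles of a polygon yield zero), and identify the resulting subalgebra with $\Lambda_{\mfU'}$ via \Cref{rmk:rebuilt}. Your presentation via the universal property of the path algebra is slightly more explicit than the paper's, which simply says the $f_{\rl}$ span a subalgebra of $\Lambda'$ isomorphic to $H^*(\Lambda')$, but the content is identical.
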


\begin{proof}
Let $f_{\mfA'}$ be the set of $f_{\rl}$, for oriented intersections $\rl$ between arcs in $\mfA'$. By \Cref{thm:QZZ}, the image of $f_{\mfA'}$ in $\per\Lambda$ is a basis of $\bigoplus_{\rho\in\mbZ}\Hom_{\per\Lambda}(\oX(\mfA'),
\oX(\mfA')[\rho])=H^*(\Lambda')$. 
Since $\mfA'$ is strongly formal with respect to $\mfA$, together with \Cref{lem:comp}, $f_{\mfA'}$ is closed under compositions of morphisms, and by \Cref{rmk:rebuilt}, one can identify $H^*(\Lambda')$ with $\Lambda_{\mfA'}$. The former allows us to linearly extend $f_{\mfA'}$ to a subalgebra of $\Lambda'$, which is isomorphic to $H^*(\Lambda')$ as graded algebras. Thus the map sending $\bar{f}_{\rl}$ to $f_{\rl}$ gives rise to the required dg algebra quasi-isomorphism.
\end{proof}

%\begin{lemma}\label{lem:generator}
%    Suppose that $\mfA'$ is strongly formal with respect to $\mfA$. Then $\oX(\mfA')$ is a generator of $\per\Lambda_{\mfA}$, that is,
    
%\end{lemma}

%\begin{proof}
%By \Cref{cons:obj}, $\oX(\tgamma_i') \in \per\Lambda_{\mfA}$ for each $i\in\{1,\cdots,n\}$, hence $\thick_{\mcD(\Lambda_{\mfA})}(\oX(\mfA')) \subseteq \per\Lambda_{\mfA}$. 

%Since $\mfA'$ is a full formal open arc system, as in Example~\ref{exm:thread}, for any $\tgamma_i\in \mfA$, there is a thread of graded open arcs in $\mfA'$
%\[
%\xymatrix{
%\tgamma_{i_1}'[m_1]
%\ar@{-}[r]^{\rl_1} &
%\tgamma_{i_2}'[m_2]
%\ar@{-}[r]^{\rl_2} &
%\cdots
%\ar@{-}[r]^(0.35){\rl_{r-1}} &
%\tgamma_{i_r}'[m_r],
%}
%\]
%such that $\tgamma_i=\tgamma_{i_1}'[m_1]\wedge_{\rl_1}\cdots\wedge_{\rl_{r-1}}\tgamma_{i_r}'$. Since $\mfA'$ is strongly formal with respect to $\mfA$, this thread is a dg thread. By \Cref{thm:smoothing}, we have an isomorphism
%$$
%\oX(\tgamma_i)\cong\left(\xymatrix{
%\oX(\tgamma_{i_1}')[m_1]
%\ar@{-}[r]^{\phi_1} &
%\oX(\tgamma_{i_2}')[m_2]
%\ar@{-}[r]^(0.65){\phi_2}&
%\cdots
%\ar@{-}[r]^(0.35){\phi_{r-1}} &
%\oX(\tgamma_{i_r}')[m_r]
%}\right).
%$$
%Hence $\oX(\tgamma_i) \in \thick_{\mcD(\Lambda_{\mfA})}(\oX(\mfA'))$, which implies that $\per\Lambda_{\mfA}=\thick_{\mcD(\Lambda_{\mfA})}(\oX(\mfA')) $.
%\end{proof}
Recall that for a graded open arc $\tsigma$, we construct a dg $\Lambda_{\mfU}$-module $\oX(\tsigma)$, as described in \Cref{cons:obj}. Similarly, we can construct a dg $\Lambda_{\mfU'}$-module using the same method, denoted by $\oX'(\tsigma)$.

\begin{theorem}\label{thm:independence}
Suppose that $\mfU'$ is strongly formal with respect to $\mfU$. Then there is a triangle equivalence
\begin{equation}\label{Independence derived equivalence}
\Phi_{\mfA'}
%:=q_{\mfA'}^*\circ \mcHom_{\Lambda_{\mfU}}(\oX(\mfA'),-)
:\per \Lambda_{\mfU} \rightarrow \per \Lambda_{\mfU'},	
\end{equation}
such that for any graded open arc $\tsigma$, we have an isomorphism in $\per\Lambda_{\mfA}$
\begin{equation}\label{eq:cong}
    \Phi_{\mfA'}(\oX(\tsigma))\cong \oX'(\tsigma).
\end{equation}
\end{theorem}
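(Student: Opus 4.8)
The plan is to realize $\Phi_{\mfA'}$ as a composite of two triangle equivalences, the first produced by the derived Morita theorem applied to $\oX(\mfA')$ and the second by the formality quasi-isomorphism of \Cref{lem:quasi}. Since $\oX(\mfA')=\bigoplus_{i=1}^n\oX(\talpha'_i)$ is a bounded complex of finitely generated projective $\Lambda_{\mfU}$-modules, it is $\oK$-projective, so we may take $\mfp\oX(\mfA')=\oX(\mfA')$ and $\Lambda'=\mcHom_{\Lambda_{\mfU}}(\oX(\mfA'),\oX(\mfA'))$ is its derived endomorphism algebra. By \Cref{thm:keller} the functor $F:=\mcHom_{\Lambda_{\mfU}}(\oX(\mfA'),-)$ restricts to a triangle equivalence $\thick_{\mcD(\Lambda_{\mfU})}(\oX(\mfA'))\xrightarrow{\ \simeq\ }\per\Lambda'$, whose source equals $\per\Lambda_{\mfU}$ by \eqref{eq:=1} in \Cref{rmk:thread1}. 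On the other hand, \Cref{lem:quasi} identifies $\Lambda_{\mfU'}$ with $H^*(\Lambda')$ and provides a dg algebra quasi-isomorphism $q_{\mfA'}\colon\Lambda_{\mfU'}\to\Lambda'$, so by \Cref{thm:quasi-iso} the restriction functor $\rest{q_{\mfA'}}\colon\per\Lambda'\xrightarrow{\ \simeq\ }\per\Lambda_{\mfU'}$ is a triangle equivalence. I set $\Phi_{\mfA'}:=\rest{q_{\mfA'}}\circ F$, a triangle equivalence $\per\Lambda_{\mfU}\to\per\Lambda_{\mfU'}$. Note that both $F$ and $\rest{q_{\mfA'}}$ commute \emph{strictly} with shifts, finite direct sums and mapping cones of dg modules, which will be used below.

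I first verify \eqref{eq:cong} on the arcs of $\mfA'$. Since $\talpha'_i$ meets the dual arc system exactly once, \Cref{cons:obj} gives $\oX'(\talpha'_i)=e_i\Lambda_{\mfU'}$, the $i$-th indecomposable projective dg $\Lambda_{\mfU'}$-module, while $F(\oX(\talpha'_i))=\mcHom_{\Lambda_{\mfU}}(\oX(\mfA'),\oX(\talpha'_i))=e_i'\Lambda'$ is the $i$-th indecomposable projective dg $\Lambda'$-module, with $e_i'=\id_{\oX(\talpha'_i)}$. Since $q_{\mfA'}(\bar f_{\rl})=f_{\rl}$ and $\id_{\oX(\talpha'_i)}=f_{\rl}$ for the trivial self-intersection $\rl$ of $\talpha'_i$, the homomorphism $q_{\mfA'}$ sends $e_i$ to $e_i'$; restricting it to this direct summand gives a quasi-isomorphism $\oX'(\talpha'_i)\xrightarrow{\ \sim\ }\rest{q_{\mfA'}}(e_i'\Lambda')=\Phi_{\mfA'}(\oX(\talpha'_i))$, hence an isomorphism in $\per\Lambda_{\mfU'}$.

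For an arbitrary graded open arc $\tsigma$, \Cref{rmk:thread1} provides a dg thread $\talpha'_{i_1}[m_1]\to\cdots\to\talpha'_{i_r}[m_r]$ of arcs of $\mfU'$ with smoothing $\tsigma$, together with the isomorphism \eqref{eq:cong1} exhibiting $\oX(\tsigma)$ as the iterated mapping cone of the morphisms $f_{\rl_u}\colon\oX(\talpha'_{i_u})[m_u]\to\oX(\talpha'_{i_{u+1}})[m_{u+1}]$ ($1\leq u\leq r-1$). By the strict compatibility of $\Phi_{\mfA'}$ with shifts and cones, $\Phi_{\mfA'}(\oX(\tsigma))$ is the corresponding iterated cone of the morphisms $\Phi_{\mfA'}(f_{\rl_u})$. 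Now $F(f_{\rl_u})$ is post-composition with $f_{\rl_u}$, which under the identification $\mcHom_{\Lambda_{\mfU}}(\oX(\mfA'),\oX(\talpha'_i))=e_i'\Lambda'$ is left multiplication by the algebra element $f_{\rl_u}=q_{\mfA'}(\bar f_{\rl_u})\in\Lambda'$; and $\bar f_{\rl_u}\in H^*(\Lambda')\cong\Lambda_{\mfU'}$ is precisely the basis element (a path in $Q_{\mfU'}$) that defines the morphism $f'_{\rl_u}\colon\oX'(\talpha'_{i_u})[m_u]\to\oX'(\talpha'_{i_{u+1}})[m_{u+1}]$ of the geometric model over $\Lambda_{\mfU'}$. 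Hence, under the quasi-isomorphisms of the previous paragraph, $\Phi_{\mfA'}(f_{\rl_u})$ is identified with $f'_{\rl_u}$, so that $\Phi_{\mfA'}(\oX(\tsigma))$ is the iterated cone of the $f'_{\rl_u}$, which by \Cref{thm:smoothing} (with $\mfU'$ in place of $\mfU$) is isomorphic to $\oX'(\tsigma)$. This gives \eqref{eq:cong}.

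I expect the main obstacle to be the bookkeeping in this last step: one must check, compatibly with the signs built into \Cref{cons:mor} and the homotopy equivalences $\chi_{\rl},\psi_{\rl}$ of \Cref{lem:compo}, that the squares relating $q_{\mfA'}$, the morphisms $f_{\rl_u}$ over $\Lambda_{\mfU}$ and the morphisms $f'_{\rl_u}$ over $\Lambda_{\mfU'}$ commute up to homotopy, so that the two iterated cones are identified as dg $\Lambda_{\mfU'}$-modules up to homotopy. The conceptual content is carried entirely by \Cref{thm:keller}, \Cref{thm:quasi-iso}, \Cref{lem:quasi} and \Cref{thm:smoothing}; everything else is routine.
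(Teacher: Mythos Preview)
Your proposal is correct and follows essentially the same approach as the paper: define $\Phi_{\mfA'}=\rest{q_{\mfA'}}\circ\mcHom_{\Lambda_{\mfU}}(\oX(\mfA'),-)$, establish that it is an equivalence via \Cref{thm:keller}, \Cref{thm:quasi-iso} and \eqref{eq:=1}, and then check \eqref{eq:cong} by transporting the dg thread \eqref{eq:thread1} through $\Phi_{\mfA'}$ and comparing with $\oX'(\tsigma)$.

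Two small remarks on the final step. First, your caveat about the squares commuting ``up to homotopy'' is overly cautious: since $q_{\mfA'}(\bar f_{\rl})=f_{\rl}$ exactly, the square
\[
\xymatrix{
P'_i\ar[d]_{q_{\mfA'}}\ar[r]^{\bar{f}_{\rl}\circ -}&P'_j\ar[d]^{q_{\mfA'}}\\
(\mathcal{P}'_i)_{\Lambda_{\mfA'}}\ar[r]^{f_{\rl}\circ -}&(\mathcal{P}'_j)_{\Lambda_{\mfA'}}
}
\]
commutes strictly in $\mathcal{C}(\Lambda_{\mfU'})$, so the vertical $q_{\mfA'}$'s assemble directly into a dg module quasi-isomorphism between the two iterated complexes, with no sign bookkeeping beyond what is already built into \Cref{cons:obj}. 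Second, invoking \Cref{thm:smoothing} over $\mfU'$ at the end is unnecessary: since each $\talpha'_{i_u}$ is an arc of $\mfU'$ itself, $\oX'(\talpha'_{i_u})=P'_{i_u}$ and the morphisms $f'_{\rl_u}$ are exactly the single-component maps $\phi_{p^{\tsigma}_u}$ of \Cref{cons:obj}, so the iterated complex you obtain \emph{is} $\oX'(\tsigma)$ on the nose, not merely isomorphic to it. The paper proceeds this way, avoiding a second appeal to smoothing.
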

\begin{proof}
%Since $\oX(\mfA')$ is a generator of $\per \Lambda$, 

%Since $\mfU'$ is strongly formal with respect to $\mfU$, 
By \Cref{lem:quasi}, there is a dg algebra quasi-isomorphism
$$q_{\mfA'}:\Lambda_{\mfU'}= H^*(\Lambda') \rightarrow \Lambda'.$$
Then by \Cref{thm:quasi-iso}, $q_{\mfA'}$ is also a dg $\Lambda_{\mfA'}$-module quasi-isomorphism from $\Lambda_{\mfA'}$ to $(\Lambda')_{\Lambda_{\mfA'}}=\rest{q_{\mfA'}}((\Lambda')_{\Lambda'})$, where $\rest{q_{\mfA'}}:\mathcal{C}_{dg}(\Lambda')\to\mathcal{C}_{dg}(\Lambda_{\mfA'})$ is the restriction functor along $q_{\mfA'}$. 

Denote $\mathcal{P}'_i=\mcHom_{\Lambda_{\mfU}}(\oX(\mfA'),\oX(\talpha'_i))$ and $P'_i=H^*(\mathcal{P}'_i)$, for any $1\leq i\leq n$.  Then we have 
$$\Lambda'=\bigoplus_{i=1}^n\mathcal{P}'_i,\text{ and }\Lambda_{\mfU'}=\bigoplus_{i=1}^n P'_i.$$
%Since
%$$\Lambda'=\mcHom_{\Lambda_{\mfU}}(\oX(\mfA'),\oX(\talpha'_1)) \oplus \cdots \oplus \mcHom_{\Lambda_{\mfU}}(\oX(\mfA'),\oX(\talpha'_n)),$$
%we have
%$$\Lambda_{\mfU'}=H^*(\mcHom_{\Lambda_{\mfU}}(\oX(\mfA'),\oX(\talpha'_1))) \oplus \cdots \oplus H^*(\mcHom_{\Lambda_{\mfU}}(\oX(\mfA'),\oX(\talpha'_n))).$$
So $q_{\mfA'}$ induces dg $\Lambda_{\mfA'}$-module quasi-isomorphisms from $P_i'$ to $(\mathcal{P}_i')_{\Lambda_{\mfA'}}=\rest{q_{\mfA'}}((\mathcal{P}_i')_{\Lambda'})$, $1\leq i\leq n$, which are also denoted by $q_{\mfA'}$. Note that $P'_i$ is the direct summand of the dg $\Lambda_{\mfA'}$-module $\Lambda_{\mfA'}$, corresponding to the idempotent associated to $\tgamma'_i$. For any oriented intersection $\rl$ from $\tgamma'_i$ to $\tgamma'_j$, the dg $\Lambda$-module morphism $f_{\rl}:\oX(\tgamma'_i)\to \oX(\tgamma'_j)$ induces a dg $\Lambda'$-module morphism:
$f_{\rl}\circ -:\mathcal{P}'_i\to\mathcal{P}'_j$, which after the action of $\rest{q_{\mfA'}}$, becomes a dg $\Lambda_{\mfA'}$-module morphism $f_{\rl}\circ -:(\mathcal{P}'_i)_{\Lambda_{\mfA'}}\to(\mathcal{P}'_j)_{\Lambda_{\mfA'}}$. Since by Lemma~\ref{lem:quasi}, $q_{\mfA'}(\bar{f}_{\rl})=f_{\rl}$, we have the following commutative diagram of dg $\Lambda_{\mfA'}$-module morphisms
\begin{equation}\label{eq:diagram1}
\xymatrix{
P'_i\ar[d]_{q_{\mfA'}}\ar[r]^{\bar{f}_{\rl}\circ -}&P'_j\ar[d]^{q_{\mfA'}}\\
(\mathcal{P}'_i)_{\Lambda_{\mfA'}}\ar[r]^{f_{\rl}\circ -}&(\mathcal{P}'_j)_{\Lambda_{\mfA'}}\ \ .
}
\end{equation}
For any graded open arc $\tsigma$, we use the notations in Remark~\ref{rmk:thread1}, especially the dg thread \eqref{eq:thread1} and the isomorphism \eqref{eq:cong1}. On the one hand, taking the composition of dg functors 
\begin{equation}\label{eq:Phi}
    \Phi_{\mfA'}=\rest{q_{\mfA'}}\circ\mcHom_{\Lambda_{\mfU}}(\oX(\mfA'),-):\per\Lambda_{\mfA} \rightarrow \per\Lambda_{\mfA'}
   ,
\end{equation}
and applying it to \eqref{eq:cong1}, we have an isomorphism of dg $\Lambda_{\mfU'}$-module
\begin{gather}\notag
\Phi_{\mfA'}(X(\tsigma))\cong\left(\xymatrix{
(\mathcal{P}_{i_1}')_{\Lambda_{\mfA'}}[m_1]
\ar@{-}[r]^{f_{\alpha_1}\circ-} &
(\mathcal{P}_{i_2}')_{\Lambda_{\mfA'}}[m_2]
\ar@{-}[r]^(0.7){f_{\alpha_2}\circ-} &
\cdots
\ar@{-}[r]^(0.3){f_{\alpha_{r-1}}\circ-} &
(\mathcal{P}_{i_r}')_{\Lambda_{\mfA'}}[m_r]
}\right).
\end{gather}
On the other hand, by applying $\oX'(-)$ to \eqref{eq:thread1}, we have a dg $\Lambda_{\mfA'}$-module
$$\oX'(\tsigma)=
\xymatrix{
P'_{i_1}[m_1]
\ar@{-}[r]^{\bar{f}_{\alpha_1}\circ-} &
P'_{i_2}[m_2]
\ar@{-}[r]^{\bar{f}_{\alpha_2}\circ-} &
\cdots
\ar@{-}[r]^{\bar{f}_{\alpha_{r-1}}\circ -\quad} &
P'_{i_r}[m_r]
	}.$$
Hence, by \eqref{eq:diagram1}, there is a commutative diagram
$$\xymatrix@C=2.5pc{
	P'_{i_1}[m_1] \ar[d]^{q_{\mfA'}}
\ar@{-}[r]^{\bar{f}_{\alpha_1}\circ-} &
P'_{i_2}[m_2] \ar[d]^{q_{\mfA'}}
\ar@{-}[r]^(0.55){\bar{f}_{\alpha_2}\circ-} &
\cdots \ar[d]^{\cdots}
\ar@{-}[r]^(0.4){\bar{f}_{\alpha_{r-1}}\circ-} &
P'_{i_r}[m_r] \ar[d]^{q_{\mfA'}} \\
	(\mathcal{P}_{i_1}')_{\Lambda_{\mfA'}}[m_1]
\ar@{-}[r]^{f_{\rl_1}\circ-} &
(\mathcal{P}_{i_2}')_{\Lambda_{\mfA'}}[m_2]
\ar@{-}[r]^(0.65){f_{\rl_2}\circ-} &
\cdots
\ar@{-}[r]^(0.3){f_{\rl_{r-1}}\circ-} &
(\mathcal{P}_{i_r}')_{\Lambda_{\mfA'}}[m_r]\ ,}$$
which is hence a dg $\Lambda_{\mfA'}$-module morphism from the first row to the second row. This is indeed a quasi-isomorphism because each of $q_{\mfA'}$ is. Thus, we have an isomorphism
$$\Phi_{\mfA'}(\oX(\sigma))\cong \oX'(\sigma)$$
in $\per\Lambda_{\mfA'}$. 
%where $\Delta_{i_j}=q^*_{\mfA'}(\mcHom_{\Lambda_{\mfU}}(\oX(\mfA'),\oX(\talpha'_{i_j})))
%=$ and $\delta_j=q^*_{\mfA'}(\mcHom_{\Lambda_{\mfU}}(\oX(\mfA'),\phi_j))$.
%This induces a dg functor
%$$q_{\mfA'}^*:$$
Further, by \eqref{eq:=1} and \Cref{thm:keller}, $\mcHom_{\Lambda_{\mfU}}(\oX(\mfA'),-)$ induces a triangle equivalence from $\per \Lambda_{\mfU}$ to $\per \Lambda'$. By \Cref{thm:quasi-iso}, $\rest{q_{\mfA'}}$ induces a triangle equivalence from $\per \Lambda'$ to $\per \Lambda_{\mfU'}$. So $\Phi_{\mfA'}$ is a triangle equivalence from $\per \Lambda_{\mfU}$ to $\per \Lambda_{\mfU'}$.

\end{proof}

\section{A geometric realization of Koszul duality}\label{geometric model for koszul duality}

%We consider the group of orientation-preserving diffeomorphisms of $\mfS$ that preserve (not necessarily point-wise) the set of open and closed marked points $\mfM$ and $\mfY$ respectively. The marked mapping class group $\MCG(\mfS)$ is the quotient of this group by the subgroup of diffeomorphisms that are isotopic to the identity. The \emph{rotation} $\varrho \in \MCG(\mfS)$ of a graded open (resp. closed) arc $\tsigma$ is obtained by moving the endpoints of $\tsigma$ to the next open (resp. closed) marked points clockwise along the boundary. One can see that $\varrho$ is a unit root of the product of two positive Dehn twists, and we refer to \cite{BQ} for more details. Next, 

In this section, we devote to giving a geometric realization of Koszul duality of graded gentle algebras.

%\subsection{Half rotation}

We first introduce an operation on the graded arcs on $\mfS^{\lambda}$.

\begin{definition}\label{def:half rot}
    Let $\tbeta$ be a graded arc whose endpoints are in $\mfM \bigcup \mfY$. The \emph{(clockwise) half rotation} $\HR(\tbeta)$ of $\tbeta$ is obtained by moving the endpoints of $\tbeta$ to the next marked points along the boundary anti-clockwise (cf. \Cref{half flip}). The grading of $\HR(\tbeta)$ inherits that of $\tbeta$. 
\end{definition}

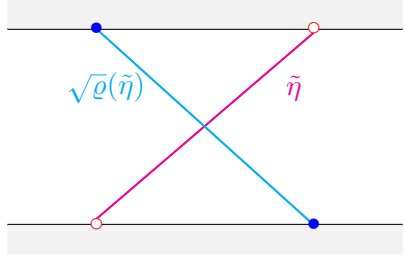
\begin{figure}[htpb]
\begin{tikzpicture}[scale=1.3]
\draw[thick]
(-2,0) to (2,0)
(-2,2) to (2,2);
\fill[fill=gray!10]
(-2,0) rectangle (2,-0.3)
(-2,2.3) rectangle (2,2);
\draw[thick,magenta]
(-1.1,0.06) to (1.1,1.96);
\draw[magenta]
(.9,1.4)node{$\tbeta$};
\draw[thick,cyan]
(1.1,0) to (-1.1,2);
\draw[cyan](-1,1.4)node{$\HR(\tbeta)$};
\draw
(-1.1,0)\rpt
(1.1,0)\bpt
(-1.1,2)\bpt
(1.1,2)\rpt;
\end{tikzpicture}
\caption{Half rotation of a graded closed arc.}
\label{half flip}	
\end{figure}

By definition, the half rotation $\HR$ gives rise to bijections from $\OC$ to $\CC$, and from $\CC$ to $\OC$, respectively. The inverse of $\HR$, denoted by $\AHR$, is called the \emph{anti-clockwise half rotation}. Note that $\HR^2=\varrho$ is the usual rotation on the sets $\OC$ and $\CC$, which is a geometric realization of the Auslander-Reiten translation in the ungraded case, cf. \cite{OPS,CS}.

Recall that we have fixed a full formal open arc system $\mfU=\{\tgamma_1,\cdots,\tgamma_n\}$ and its dual $\mfV=\{\teta_1,\cdots,\teta_n\}$. Denote $\HR(\mfV)=\{\HR(\tbeta_1),\cdots,\HR(\tbeta_n)\}$. By definition, $\HR(\mfV)$ is a full formal open arc system.

\begin{lemma}\label{lem:str f}
    The full formal open arc system $\HR(\mfV)$ is strongly formal with respect to $\mfU$.
\end{lemma}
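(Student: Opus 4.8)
The plan is to verify the condition of \Cref{def:str f} directly, after reducing it to a local statement. Fix a $\HR(\mfV)$-polygon $P$ together with two oriented intersections $\rl,\rl'$ forming adjacent inner angles of $P$ and sharing the common side $\HR(\tbeta_j)$; thus $\rl$ runs from $\HR(\tbeta_i)$ to $\HR(\tbeta_j)$ at one endpoint $M$ of $\HR(\tbeta_j)$, and $\rl'$ runs from $\HR(\tbeta_j)$ to $\HR(\tbeta_k)$ at the other endpoint $M'$, where $\HR(\tbeta_i),\HR(\tbeta_j),\HR(\tbeta_k)$ are three consecutive sides of $P$. It then suffices to prove that $f_{\rl'}\circ f_{\rl}=0$ in $\mcC_{dg}(\Lambda)$, where $\Lambda=\Lambda_{\mfU}$.

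First I would pin down the combinatorics. Applying the anti-clockwise half rotation $\AHR$ to all arcs of $\HR(\mfV)$ recovers $\mfV=\mfU^{\ast}$ and only drags endpoints along $\partial\mfS$, so it induces a bijection between the $\HR(\mfV)$-polygons and the $\mfV$-polygons. Hence $P$ corresponds to a $\mfV$-polygon $V$, which, $\mfV$ being a full formal closed arc system, contains a unique open marked point; the side $\HR(\tbeta_j)$ is the half rotation of a side $\tbeta_j$ of $V$ whose two endpoints are vertices $Y,Y'$ of $V$, and $\HR(\tbeta_i)$ (resp.\ $\HR(\tbeta_k)$) is the half rotation of the side of $V$ adjacent to $\tbeta_j$ at $Y$ (resp.\ $Y'$). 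Next I would write down $\oX(\HR(\tbeta_i))$, $\oX(\HR(\tbeta_j))$, $\oX(\HR(\tbeta_k))$ using \Cref{not:arc} and \Cref{cons:obj}: the relevant observation is that $\HR(\tbeta)$, for a side $\tbeta$ of $V$, is obtained from $\tbeta$ by pushing each endpoint off the corresponding vertex of $V$ to the next open marked point, so its crossings with $\mfU^{\ast}=\mfV$ are precisely the $\mfV$-arcs swept out near those two endpoints; in particular the initial summands of $\oX(\HR(\tbeta_j))$ read from the $M$-side, and those read from the $M'$-side, are governed by the local pictures at $Y$ and at $Y'$ respectively.

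With these descriptions I would compute $f_{\rl}$ and $f_{\rl'}$ from \Cref{cons:mor}, and then $f_{\rl'}\circ f_{\rl}:\oX(\HR(\tbeta_i))\to\oX(\HR(\tbeta_k))$ component by component, keeping track of the signs prescribed there. The decisive point should be that, with $\HR(\tbeta_j)$ oriented from $M$, the morphism $f_{\rl}$ lands in the summands of $\oX(\HR(\tbeta_j))$ coming from crossings near $M$ (hence near $Y$), whereas $f_{\rl'}$, written with $\HR(\tbeta_j)$ oriented from $M'$, is supported on the summands coming from crossings near $M'$ (hence near $Y'$); since $M\neq M'$ are the two distinct endpoints of $\HR(\tbeta_j)$, these two supports are generically complementary, so in the generic case the composition already vanishes on the level of graded modules. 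In the remaining configurations a summand of $\oX(\HR(\tbeta_j))$ may lie in both supports, and there the corresponding component of $f_{\rl'}\circ f_{\rl}$ is a path in $Q_{\mfU}$ obtained by concatenating the path contributed by $f_{\rl}$ with the one contributed by $f_{\rl'}$; one checks that this concatenation is forced into the relation ideal $\langle I_{\mfU}\rangle$, so the component is zero as well. I expect the real obstacle to be not any single idea but the bookkeeping: this must be run through the shapes \eqref{eq:mor1}--\eqref{eq:mor5} for each of $f_{\rl}$ and $f_{\rl'}$, together with the degenerate cases (the fan at $Y$ or at $Y'$ empty, $P$ a bigon or a triangle, a side of $P$ a loop), keeping careful track of signs. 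Once this is done, \Cref{def:str f} holds for $\HR(\mfV)$, and in particular \Cref{thm:independence} applies to it.
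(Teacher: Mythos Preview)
Your setup is exactly the paper's: reduce to three consecutive edges $\tbeta_i,\tbeta_j,\tbeta_k$ of an $\mfV$-polygon (via \Cref{lem:comp}), then analyse $f_{\rl}$ and $f_{\rl'}$ through \Cref{cons:mor}. The difference is that the paper pinpoints a single geometric observation that collapses your anticipated case-by-case check: for half-rotated dual arcs, the pair $(\HR(\tbeta_i),\HR(\tbeta_j))$ is \emph{always} in the configuration of the middle picture of \Cref{fig:mor45} (case \eqref{eq:mor4}), and $(\HR(\tbeta_j),\HR(\tbeta_k))$ is always in that case or in the right picture of \Cref{fig:mor123} (case \eqref{eq:mor3}). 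In both, every nonzero component of $f_{\rl}$ and of $f_{\rl'}$ is $\pm\id$; and the codomains of the identities in $f_{\rl}$ (the summands of $\oX(\HR(\tbeta_j))$ swept near $Y$) are disjoint from the domains of the identities in $f_{\rl'}$ (those swept near $Y'$), the crossing of $\HR(\tbeta_j)$ with $\tbeta_j$ itself lying in neither. So $f_{\rl'}\circ f_{\rl}=0$ on the nose, with no residual cases.

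This means your ``remaining configurations'' never occur, and your fallback argument for them is actually problematic as stated: if a summand of $\oX(\HR(\tbeta_j))$ did lie in both supports, the two components through it would both be identities, and their composite would be the identity---a trivial path, not an element of $\langle I_{\mfU}\rangle$. The concatenation-into-relations mechanism you invoke only makes sense when at least one component is a nontrivial $\phi_p$ (cases \eqref{eq:mor1} or \eqref{eq:mor5}), and the paper's point is precisely that those cases do not arise here. So rather than planning a full sweep through \eqref{eq:mor1}--\eqref{eq:mor5} with degenerate subcases, look for the geometric reason (visible in \Cref{fig:half rot str}) that the arcs $\HR(\tbeta_i)$ and $\HR(\tbeta_j)$ separate on opposite sides of $\tgamma_j\in\mfU$ after their common initial segment; once you see that, the proof is two lines.
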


\begin{proof}
    Denote $\tgamma'_i=\HR(\tbeta_i)$ for $1\leq i\leq n$. Assume that there are different $\tgamma'_i,\tgamma'_j,\tgamma'_k$ and oriented intersections $\rl,\rl'$ between them, which are in the situation in \Cref{def:str f}, see \Cref{fig:strong}. Due to \Cref{lem:comp}, we may assume that $\tbeta_i$, $\tbeta_j$ and $\tbeta_k$ are three successive edges of the same $\mfV$-polygon, see \Cref{fig:half rot str}. Then the relative position of $\tgamma'_i$ and $\tgamma'_j$ is of the form shown in the middle picture of \Cref{fig:mor45}, while the relative position of $\tgamma'_j$ and $\tgamma'_k$ is of the form shown in the middle picture of \Cref{fig:mor45} or the form shown in the right picture in \Cref{fig:mor123}. Hence by \Cref{cons:mor}, all of the non-zero components of $f_{\rl}$ and $f_{\rl'}$ are identities, and the codomain of each identity in $f_{\rl}$ is different from the domain of any identity in $f_{\rl'}$. So we have $f_{\rl'} \circ f_{\rl}=0$. This implies that $\HR(\mfV)$ is strongly formal with respect to $\mfU$.

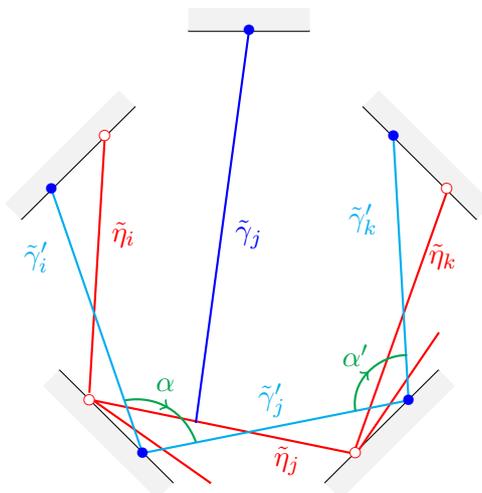
\begin{figure}[htpb]
	\begin{tikzpicture}
\draw[thick]
(-3,0) to (-4.5,1.5)
(-1,0) to (0.5,1.5)
(-3.5,5) to (-5,3.5)
(-0.5,5) to (1,3.5)
(-2.8,6) to (-1.2,6);
\fill[fill=gray!10]
(-2.8,6.3) rectangle (-1.2,6)
(-4.5,1.5)--(-4.71,1.29)--(-3.21,-0.21)--(-3,0)
(0.5,1.5)--(-1,0)--(-0.79,-0.21)--(0.71,1.29)
(-3.71,5.21)--(-5.21,3.71)--(-5,3.5)--(-3.5,5)
(-0.29,5.21)--(-0.5,5)--(1,3.5)--(1.21,3.71);
\draw[thick,red]
(-4.1,1.2) to (-3.9,4.55)
(-4.05,1.1) to (-0.65,0.4)
(-0.6,0.46) to (0.6,3.85)
(-4.05,1.1) to (-2.5,0)
(-0.55,0.45) to (0.5,2);
\draw[thick,cyan]
(-3.4,0.4) to (-4.6,3.9)
(-3.4,0.4) to (0.1,1.1)
(0.1,1.1) to (-0.1,4.6);
\draw[thick,blue]
(-2,6) to (-2.7,0.8);
\draw[red]
(-3.65,3.3)node{$\teta_i$}
(-1.5,0.3)node{$\teta_j$}
(0.55,3)node{$\teta_k$};
\draw[cyan]
(-4.8,3)node{$\tgamma'_i$}
(-1.7,1.1)node{$\tgamma'_j$}
(-0.5,3.5)node{$\tgamma'_k$};
\draw[thick,blue](-2,3.3)node{$\tgamma_j$};
\draw[Green, thick,->-=0.5]
(-3.63,1.1) to [out=10,in=110] (-2.7,0.55);
\draw[Green, thick,->-=0.5]
(-0.6,0.96) to [out=100,in=-180] (0.08,1.7);
\draw[Green,thick]
(-3.1,1.3)node{$\rl$}
(-0.6,1.7)node{$\rl'$};
\draw
(-3.4,0.4)\bpt
(0.1,1.1)\bpt
(-4.6,3.9)\bpt
(-0.1,4.6)\bpt
(-2,6)\bpt
(-4.1,1.1)\rpt
(-0.6,0.4)\rpt
(-3.9,4.6)\rpt
(0.6,3.9)\rpt;
	\end{tikzpicture}
	 \caption{Half rotation of dual arc system.}
	 \label{fig:half rot str}
\end{figure}
\end{proof}

We denote by $S_i$ the simple dg $\Lambda_{\mfA}$-module corresponding to the vertex $i$. A path $a_1a_2\cdots a_l$ in $Q_{\mfA}$ (i.e. $a_j\in (Q_{\mfA})_1$ with $t(a_j)=s(a_{j+1})$) is called a maximal relation path if each $a_ja_{j+1} \in I$ and  there is no arrow $a_{l+1} \in (Q_{\mfA})_1$ such that $s(a_l)=t(a_{l+1})$ and $a_la_{l+1} \in I$. 
%Let $\operatorname{MR}(i)$ be the set of maximal relation paths starting at $i$. 
Since $(Q_{\mfA},I_{\mfA})$ is a gentle pair, there is a bijection between maximal relation paths and their starting arrows.

\begin{lemma} \label{lem:S}
    There is a quasi-isomorphism $\pi_i\colon\oX(\HF(\tbeta_i))\to S_i$ of dg $\Lambda_{\mfA}$-modules.
    %where $\tbeta_i$ is a simple closed arc in $\mfV$.
\end{lemma}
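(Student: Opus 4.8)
The plan is to identify $\oX(\HF(\tbeta_i))$ homologically, by testing it against the indecomposable projectives, rather than to recognize the complex by hand. Recall first that $P_j=\oX(\tgamma_j)$: the arc $\tgamma_j$ meets $\mfV=\mfA^{\ast}$ only in $\tbeta_j$, transversally once in $\mfS^{\circ}$ and with index $0$, so \Cref{cons:obj} gives $\oX(\tgamma_j)=P_j$. Since $P_j$ is $\oK$-projective, for every dg $\Lambda_{\mfA}$-module $M$ one has $\Hom_{\mcD(\Lambda_{\mfA})}(P_j,M[\rho])=H^{\rho}(M)e_j$, whence, using \eqref{eq:int=dim} of \Cref{thm:QZZ},
\[
H^{\rho}\bigl(\oX(\HF(\tbeta_i))\bigr)e_j=\Hom_{\per\Lambda_{\mfA}}\bigl(\oX(\tgamma_j),\oX(\HF(\tbeta_i))[\rho]\bigr)=\Int^{\rho}(\tgamma_j,\HF(\tbeta_i)).
\]
So everything reduces to the geometric identity $\Int^{\rho}(\tgamma_j,\HF(\tbeta_i))=\delta_{ij}\delta_{\rho 0}$. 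Granting it, $\oX(\HF(\tbeta_i))$ has cohomology equal to $\k$, concentrated in degree $0$ and supported at the vertex $i$; as the cohomology of a dg module is a module, $H^{\ast}(\oX(\HF(\tbeta_i)))\cong S_i$ as graded $\Lambda_{\mfA}$-modules. Because $\Lambda_{\mfA}$ carries the zero differential, $\oX(\HF(\tbeta_i))$ is just a complex of graded projective $\Lambda_{\mfA}$-modules with cohomology in a single degree, so the usual truncation shows it is quasi-isomorphic to $S_i$; and since $\oX(\HF(\tbeta_i))$ is a finite iterated extension of shifted copies of the $P_k$'s, hence $\oK$-projective, the resulting isomorphism $\oX(\HF(\tbeta_i))\xrightarrow{\simeq}S_i$ in $\mcD(\Lambda_{\mfA})$ is represented by a genuine dg-module morphism $\pi_i$, which is the desired quasi-isomorphism.

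It remains to prove $\Int^{\rho}(\tgamma_j,\HF(\tbeta_i))=\delta_{ij}\delta_{\rho 0}$, which I would establish by a local analysis around $\tbeta_i$. Since $\mfV$ is dual to $\mfA$, the arc $\tbeta_i$ meets $\mfA$ only in $\tgamma_i$, at a single transversal interior point, where the unique oriented intersection from $\tgamma_i$ to $\tbeta_i$ has index $0$ (this is the content of \Cref{lem:deg} together with duality, cf. \Cref{fig:p-s}). Each endpoint $Y$ of $\tbeta_i$ is the distinguished closed marked point of a unique $\mfA$-polygon $P_Y$; because open and closed marked points alternate on $\partial\mfS$, the point of $\mfM$ immediately anti-clockwise of $Y$ lies on the same boundary segment of $P_Y$, so $\HF(\tbeta_i)$ is obtained from $\tbeta_i$ by an isotopy supported in the two polygons at its ends. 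Such an isotopy creates or destroys no interior intersection with any $\tgamma_j$ and, since the grading of $\HF(\tbeta_i)$ is inherited from $\tbeta_i$, does not change the index at the surviving (essential) crossing with $\tgamma_i$; it therefore suffices to check that no oriented intersection from a $\tgamma_j$ to $\HF(\tbeta_i)$ is created at the two new, open, endpoints of $\HF(\tbeta_i)$. At such an endpoint $M$, the germ of $\HF(\tbeta_i)$ points into $P_Y$ immediately next to the boundary segment traversed by the half rotation, so the small clockwise arc required by \Cref{def:oint} to run from the germ of any $\tgamma_j$ at $M$ to that of $\HF(\tbeta_i)$ would have to leave $\mfS^{\circ}$; hence there is none. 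This yields the identity, and with it the lemma.

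The main obstacle is precisely this geometric step. One must track the local picture around each endpoint of $\tbeta_i$ carefully: that the single interior crossing with $\tgamma_i$ remains essential under the half rotation, so that it is not cancelled by a boundary bigon, and — the genuinely new point compared with the untwisted situation — that the relocated open endpoints of $\HF(\tbeta_i)$ contribute no oriented intersections to the $\tgamma_j$'s, which hinges on the half rotation being \emph{anti-clockwise} while an oriented intersection is witnessed by a \emph{clockwise} small arc. An alternative route that bypasses the homological argument is to describe the twisted complex $\oX(\HF(\tbeta_i))$ explicitly: its summands are indexed by the arcs of $\mfV$ crossed by $\HF(\tbeta_i)$, which one matches with the arrows and maximal relation paths of $(Q_{\mfA},I_{\mfA})$ issuing from $i$, so that $\oX(\HF(\tbeta_i))$ is literally the minimal projective resolution of $S_i$, with $\pi_i$ the augmentation $P_i\twoheadrightarrow S_i$; this amounts to the same local analysis, organised differently.
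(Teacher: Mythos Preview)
Your main route has a real gap at the sentence ``$\oX(\HF(\tbeta_i))$ is just a complex of graded projective $\Lambda_{\mfA}$-modules \ldots\ so the usual truncation shows it is quasi-isomorphic to $S_i$''. A dg module over a graded algebra with zero differential is \emph{not} a complex of graded modules: there is only one grading, and the differential shifts it. The naive truncation $\tau_{\leq 0}M$ (replace $M^0$ by $Z^0(M)$, kill $M^{>0}$) is generally not a $\Lambda_{\mfA}$-submodule once $\Lambda_{\mfA}$ has elements of positive degree, which a graded gentle algebra typically does; dually for $\tau_{\geq 0}$. So from $H^{\ast}(M)\cong S_i$ alone you do not get $M\cong S_i$ in $\mcD(\Lambda_{\mfA})$, and hence no map to lift via $\oK$-projectivity. (Two dg modules with isomorphic cohomology need not be quasi-isomorphic.) Your cohomological computation also never produces the unshifted summand $P_i$ inside $\oX(\HF(\tbeta_i))$, which any construction of $\pi_i$ must eventually exhibit.

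What is correct: the geometric identity $\Int^{\rho}(\tgamma_j,\HF(\tbeta_i))=\delta_{ij}\delta_{\rho 0}$ and your local argument for it are fine --- this is exactly the special case of \Cref{lem:int-dim} with $\tsigma=\tgamma_j$, $\tiota=\tbeta_i$. And the ``alternative route'' you sketch in your final paragraph is both correct and is precisely the paper's proof: one reads off the summands of $\oX(\HF(\tbeta_i))$ from the crossings of $\HF(\tbeta_i)$ with $\mfA^{\ast}$, matches the two halves with the two maximal relation paths issuing from $i$, observes that all differential components point toward a central unshifted $P_i$, and takes $\pi_i$ to be the projection $P_i\twoheadrightarrow S_i$. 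That explicit description is what closes the gap; your indirect cohomological argument does not bypass it.
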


\begin{proof}
    There is an induced intersection $Z_i$ between $\HF(\tbeta_i)$ and $\tbeta_i$, such that each half of $\HF(\tbeta_i)$ divided by $Z_i$ goes through a sequence of arc segments, which corresponds to a maximal relation path starting at $i$, see \Cref{flip of simples}. Hence by \Cref{cons:obj}, $\oX(\HF(\tbeta_i))$ is given by the following form
    \begin{gather} \label{resolution of simple has 2 arrows out}
    \xymatrix@C=3pc{
        P_{t(a_l)}{[l-(\sum^{l}_{i=1}|a_i|)]}\ar[r]^(0.7){\phi_l} & \cdots \ar[r]^(0.3){\phi_2} & P_{t(a_1)}{[1-|a_1|]} \ar[rd]^{\phi_1} & \\
        &&& P_i, \\
        P_{t(a'_m)}{[m-(\sum^{m}_{j=1}|a'_j|)]}\ar[r]^(0.7){\phi'_m} & \cdots \ar[r]^(0.3){\phi'_2} & P_{t(a'_1)}{[1-|a'_1|]} \ar[ru]^{\phi'_1}
    }
    \end{gather}
    where $a_1\cdots a_l$ and $a'_1\cdots a'_m$ are the maximal relation paths starting at $i$, $\phi_i$ is induced by $\phi_{a_i}$ and $\phi'_j$ is induced by $\phi_{a_j'}$. Note that any of the two maximal relation paths may not exist. Let $\pi_i$ be the morphism from $\oX(\HF(\tbeta_i))$ to $S_i$ whose only non-zero component is the canonical projection form $P_i$ to $S_i$. Then $\pi_i$ is a dg $\Lambda$-module morphism with $H^0(\pi_i)$ an isomorphism. Hence $\pi_i$ is a quasi-isomorphism.

%Firstly, we consider the case that $\tbeta_i$ is not a loop. It can be observed that there exists only one intersection $Z_i$ between $\tbeta_i$ and $\HR(\tbeta_i)$. There are three cases, depending on the number of the arrows with starting point $i$.

%\begin{enumerate}
%\item Case that there is no arrow with starting point $i$ in $Q_{\mfU}$.
%In this case, the arcs in $\mfA^*$ that share the common endpoints with $\teta_i$ are located anti-clockwise of $\tbeta_i$, see the left figure of \Cref{flip of simples}. It implies that $\teta_i$ is the only simple closed arc in $\mfA^*$ that intersects $\HR(\teta_i)$.
\begin{figure}[htpb]
\begin{tikzpicture}[scale=1]

%the third picture
%\begin{scope}[shift={(9.2,0)}]
\draw[thick]
(-4,0) to (0,0)
(-4,3) to (0,3);
\draw[thick,red]
(-2,0.06) to (-2,2.96)
(-2,0.06) to (0.5,0.6)
(-2,0.06) to (-00.5,2.5)
(-2,2.96) to (-4.1,2.2)
(-2,2.96) to (-2.5,0.8);
\draw(-1.7,1.5)node{$Z_i$};
\draw[thick,cyan]
(-0.5,0) to (-3.5,3);
\fill[fill=gray!10]
(-4,0) rectangle (0,-0.3)
(-4,3.3) rectangle (0,3);
\draw[red]
(-1.8,2.4)node{$\teta_i$}
(-0.3,2)node{$\teta_{t(a_1)}$}
(0.2,0.8)node{$\teta_{t(a_l)}$}
(-0.8,1)node{$\ddots$}
(-2.5,2.4)node{$\ddots$}
(-2.9,1)node{$\teta_{t(a'_1)}$}
(-3.8,2.7)node{$\teta_{t(a'_m)}$};
\draw[cyan](-3.1,1.9)node{$\HR(\teta_i)$};
\draw
(-2,0)\rpt
(-2,3)\rpt
(-0.5,0)\bpt
(-3.5,3)\bpt;
%\end{scope}
\end{tikzpicture}
\caption{Half rotation of simple closed arcs in $\mfA^*$.}
\label{flip of simples}
\end{figure}
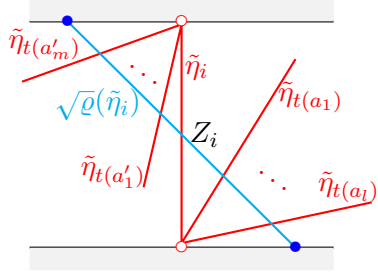

\end{proof}

Thus, $\oX(\HF(\tbeta_i))$ is a $\oK$-projective resolution of $S_i$, which is denoted by $\mfp S_i$.

%\subsection{Koszul duality}\label{subsec:koszul}

We follow the definition of Koszul duality of dg algebras in \cite[Sec. 9]{Ke94}. 

\begin{definition}\label{def:koszul}
Let $\sS=S_1\oplus\cdots\oplus S_n$. The \emph{Koszul duality} $\Lambda_{\mfA}^*$ of $\Lambda_{\mfA}$ is the derived endomorphism algebra of $\sS$, that is, 
$$\Lambda_{\mfA}^{*}=\mcHom_{\Lambda_{\mfA}}(\mfp\sS,\mfp\sS).$$
The \emph{Koszul functor} is the triangle equivalence (see \Cref{thm:keller})
$$\mcHom_{\Lambda_{\mfA}}(\mfp\sS,-):\thick_{\mcD(\Lambda_{\mfA})}(\sS)\xrightarrow{\simeq}\per\Lambda_{\mfA}^{*}.$$
\end{definition}

By \Cref{lem:S}, $\oX(\HR(\mfA^*))$ is a $\oK$-projective resolution of $\sS$. So we have  $$\Lambda_{\mfA}^*=\mcHom_{\Lambda_{\mfA}}(\oX(\HR(\mfA^*)),\oX(\HR(\mfA^*))).$$
By \Cref{lem:str f}, $\HR(\mfA^*)$ is strongly formal with respect to $\mfA$. Hence by \eqref{eq:=1}, we have $\thick_{\mathcal{D}(\Lambda_{\mfA})}(\sS)=\per\Lambda_{\mfA}$. Thus, the Koszul functor can be written as
$$\mcHom_{\Lambda_{\mfA}}(\oX(\HR(\mfA^*)),-):\per\Lambda_{\mfA}\xrightarrow{\simeq}\per\Lambda_{\mfA}^{*}.$$

Recall from \Cref{def:quad dual} that $\Lambda_{\mfA}^{!}$ is the quadratic duality of $\Lambda_{\mfA}$.

\begin{lemma}\label{lem:quasi koszul}
    There is a quasi-isomorphism from $\Lambda_{\mfA}^{!}$ to $\Lambda_{\mfA}^*$ of dg algebras.
\end{lemma}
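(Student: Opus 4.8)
The plan is to identify $\Lambda_{\mfA}^*=\mcHom_{\Lambda_{\mfA}}(\oX(\HR(\mfA^*)),\oX(\HR(\mfA^*)))$ with the cohomology algebra $H^*(\Lambda_{\mfA}^*)$ via a formality argument, exactly as in \Cref{lem:quasi}, and then identify that cohomology algebra with the quadratic dual $\Lambda_{\mfA}^!$ using the explicit description of $Q_{\mfA}^{\op}$ and $I^\perp$ from \Cref{def:quad dual}. Since $\HR(\mfA^*)$ is strongly formal with respect to $\mfA$ by \Cref{lem:str f}, \Cref{lem:quasi} applies verbatim with $\mfA'=\HR(\mfA^*)$: it gives $\Lambda_{\Lambda\mfA'}=H^*(\Lambda_{\mfA}^*)$ together with a dg algebra quasi-isomorphism $q_{\mfA'}\colon H^*(\Lambda_{\mfA}^*)\to\Lambda_{\mfA}^*$ sending $\bar f_{\rl}$ to $f_{\rl}$. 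So the content of this lemma reduces to the purely combinatorial statement
\[
\Lambda_{\HR(\mfA^*)}\cong \Lambda_{\mfA}^!
\]
as graded algebras, after which the composite $\Lambda_{\mfA}^!\xrightarrow{\sim}\Lambda_{\HR(\mfA^*)}=H^*(\Lambda_{\mfA}^*)\xrightarrow{q_{\mfA'}}\Lambda_{\mfA}^*$ is the desired dg algebra quasi-isomorphism.

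\textbf{First} I would set up the bijection on vertices: both $\Lambda_{\mfA}^!$ and $\Lambda_{\HR(\mfA^*)}$ have vertex set indexed by $\{1,\dots,n\}$, with vertex $i$ of $\Lambda_{\HR(\mfA^*)}$ corresponding to the open arc $\tgamma'_i=\HR(\tbeta_i)$ and vertex $i$ of $\Lambda_{\mfA}^!$ to the idempotent $e_i$. \textbf{Next}, I would match arrows. By \Cref{rmk:rebuilt}, a basis of $\Lambda_{\HR(\mfA^*)}$ is given by oriented intersections between arcs in $\HR(\mfA^*)$ that are inner angles of $\HR(\mfA^*)$-polygons; each such $\HR(\mfA^*)$-polygon sits around a closed marked point $Y\in\mfY$ which is a vertex of the dual polygon data. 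Concretely, an arrow $a\colon i\to j$ of $Q_{\mfA}$ arises from an oriented intersection of $\talpha_j$ and $\talpha_i$ at an open marked point $M$ that is an inner angle of an $\mfA$-polygon (around a closed marked point); under half-rotation the endpoints move and this data is converted into an oriented intersection of $\HR(\tbeta_i)$ and $\HR(\tbeta_j)$ at a closed marked point that is an inner angle of an $\HR(\mfA^*)$-polygon --- going in the reversed direction, giving an arrow $j\to i$ in $Q_{\HR(\mfA^*)}=Q_{\mfA}^{\op}$. For the degree, I would invoke \Cref{lem:deg}: the lemma's identity $\ind_M(\talpha_i,\talpha_j)+\ind_Y(\tbeta_j,\tbeta_i)=1$ together with $\ind$ being insensitive to the boundary-endpoint sliding in the relevant sense (the half rotation only moves endpoints but the grading is inherited, and the index between the rotated arcs at the new intersection point is computed by a constant-grading polygon argument) yields $|a^*|=\ind(\text{rotated intersection})=1-|a|$, matching \Cref{def:quad dual}. \textbf{Finally}, for the relations: two composable oriented intersections in $Q_{\HR(\mfA^*)}$ give a path outside $\langle I_{\HR(\mfA^*)}\rangle$ iff the corresponding segments are \emph{not} adjacent inner angles of the same $\HR(\mfA^*)$-polygon, which translates (under the polygon duality that swaps open/closed polygons) precisely to the condition $ab\notin I_{\mfA}$; hence $I_{\HR(\mfA^*)}=\{b^*a^*\mid ab\notin I_{\mfA}\}=I^\perp$.

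\textbf{The main obstacle} I expect is the careful bookkeeping in the degree computation and the polygon-duality dictionary: one must verify that under $\HR$ an inner angle of an $\mfA$-polygon at $M\in\mfM$ is carried to an inner angle of an $\HR(\mfA^*)$-polygon at some $Y\in\mfY$ with the \emph{opposite} orientation, and that the index transforms by $\rho\mapsto 1-\rho$. The cleanest way is to note that an inner angle $\rl$ of an $\mfA$-polygon from $\talpha_j$ to $\talpha_i$ at $M$ is "dual" to an inner angle of the corresponding $\HR(\mfA^*)$-polygon — because $\HR(\mfA^*)$-polygons are, up to the half-rotation, the same regions as $\mfV$-polygons viewed around open marked points, which in turn are cut by the $\mfV$-arcs that are duals of the $\mfA$-arcs bounding the $\mfA$-polygon — and then apply \Cref{lem:deg} to the quadrilateral formed by $\talpha_i,\talpha_j,\tbeta_i,\tbeta_j$ exactly as in the proof of \Cref{prop:quad dual}. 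In fact \Cref{prop:quad dual} already establishes $(\Lambda_{\mfA})^!\cong\Lambda_{\mfV}$, and since half rotation induces a simplicial isomorphism of the arc-system combinatorics $\mfV\leftrightarrow\HR(\mfV)$ fixing which segments are inner angles of which polygons (it is a homeomorphism-type move on the surface data), we get $\Lambda_{\mfV}\cong\Lambda_{\HR(\mfV)}$ compatibly with degrees and relations; combining, $\Lambda_{\mfA}^!\cong\Lambda_{\HR(\mfA^*)}=H^*(\Lambda_{\mfA}^*)$, and composing with $q_{\mfA'}$ finishes the proof. So the real work is just confirming that $\HR$ does not disturb the quiver-with-relations data of the dual arc system — which is immediate from \Cref{def:half rot} since $\HR$ slides endpoints to adjacent marked points and the $\mfV$-polygons and $\HR(\mfV)$-polygons are in canonical bijection preserving incidences.
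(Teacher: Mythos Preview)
Your proposal is correct and follows the same route as the paper: apply \Cref{lem:str f} and \Cref{lem:quasi} with $\mfA'=\HR(\mfA^*)$ to get $q_{\mfA'}\colon \Lambda_{\HR(\mfA^*)}=H^*(\Lambda_{\mfA}^*)\to\Lambda_{\mfA}^*$, and then identify $\Lambda_{\HR(\mfA^*)}\cong\Lambda_{\mfA}^!$ via \Cref{prop:quad dual}. The paper's write-up is much shorter because it observes directly that $\Lambda_{\HR(\mfA^*)}=\Lambda_{\mfA^*}$ (half rotation only slides endpoints, so the quiver with relations of $\HR(\mfA^*)$ is literally that of $\mfA^*$) and then invokes \Cref{prop:quad dual}; your detailed matching of vertices, arrows, degrees and relations is redundant with the proof of \Cref{prop:quad dual} and can be dropped.
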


\begin{proof}
     Since $\HR(\mfA^*)$ is strongly formal with respect to $\mfA$, by \Cref{lem:quasi}, there is a quasi-isomorphism $$q_{\HR(\mfA^*)}:\Lambda_{\HR(\mfA^*)}=H^*(\Lambda_{\mfA}^*)\to \Lambda_{\mfA}^*.$$
    By definition, we have $\Lambda_{\HR(\mfA^*)}=\Lambda_{\mfA^*}$. Then by \Cref{prop:quad dual}, we get the required quasi-isomprhism.
\end{proof}

Due to the lemma above, the quadratic duality of $\Lambda_{\mfA}$ is also called the Koszul duality of $\Lambda_{\mfA}$. Hence, the dual $\mfA^*$ of a full formal open arc system $\mfA$ can be regarded as a geometric realization of the Koszul duality $\Lambda_{\mfA}^*$ of the graded gentle algebra $\Lambda_{\mfA}$ associated with $\mfA$. Similarly, we also call the composition 
\begin{equation}\label{eq:koszul}
    \mcK_{{\mfA}}=\rest{q_{\HR(\mfA^*)}}\circ\mcHom_{\Lambda_{\mfA}}(\mfp\sS,-):\per\Lambda_{\mfA}\to \per\Lambda_{\mfA^*}
\end{equation}
the Koszul functor. Let $\mcK^{{\mfA}}$ be a quasi-inverse of the Koszul functor $\mcK_{{\mfA}}$. By the dual of \Cref{cons:obj}, there is an injective map $\oY:\CC \rightarrow \per \Lambda_{\mfA^*}$.

\begin{theorem}\label{thm:koszul}
For any graded closed arc $\tbeta$, there is an isomorphism in $\per \Lambda_{\mfA}$
\begin{equation}\label{eq:main}
\oX(\HR(\tbeta)) \cong \mcK^{{\mfA}}(\oY(\tbeta)).
\end{equation}
That is, there is a commutative diagram
\[
\xymatrix@C=3pc{
\CC \ar[r]^{\oY}\ar[d]_{\HR} &   \per \Lambda_{\mfA^*}\ar[d]^{\mcK^{{\mfA}}} \\
\OC   \ar[r]_{\oX}& \per\Lambda_{\mfA}\ \ .
}
\]
\end{theorem}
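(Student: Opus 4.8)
The plan is to identify the Koszul functor $\mcK_{\mfA}$ of \eqref{eq:koszul} with the derived equivalence $\Phi_{\mfA'}$ of \Cref{thm:independence} for the arc system $\mfA'=\HR(\mfA^*)$, and then to read the theorem off the compatibility \eqref{eq:cong}. First I would observe that \Cref{lem:str f} makes $\HR(\mfA^*)$ strongly formal with respect to $\mfA$, so \Cref{thm:independence} applies with this $\mfA'$. Unwinding \Cref{cons:obj}, $\oX(\mfA')=\bigoplus_{i=1}^{n}\oX(\HR(\tbeta_i))$, which by \Cref{lem:S} is exactly the $\oK$-projective resolution $\mfp\sS$ of $\sS$; hence $\Lambda'=\mcHom_{\Lambda_{\mfA}}(\oX(\mfA'),\oX(\mfA'))=\Lambda_{\mfA}^{*}$, the quasi-isomorphism $q_{\mfA'}$ given by \Cref{lem:quasi} is the map $q_{\HR(\mfA^*)}\colon\Lambda_{\HR(\mfA^*)}=H^{*}(\Lambda_{\mfA}^{*})\to\Lambda_{\mfA}^{*}$ occurring in \eqref{eq:koszul}, and $\Lambda_{\HR(\mfA^*)}=\Lambda_{\mfA^*}$ as recorded in the proof of \Cref{lem:quasi koszul}. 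With these identifications the formula \eqref{eq:Phi} for $\Phi_{\mfA'}$ becomes verbatim the formula \eqref{eq:koszul} for $\mcK_{\mfA}$, so $\mcK_{\mfA}=\Phi_{\HR(\mfA^*)}$ as triangle functors $\per\Lambda_{\mfA}\to\per\Lambda_{\mfA^*}$.

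Granting this, applying \eqref{eq:cong} to the graded open arc $\HR(\tbeta)$ gives an isomorphism $\mcK_{\mfA}(\oX(\HR(\tbeta)))\cong\oX'(\HR(\tbeta))$ in $\per\Lambda_{\mfA^*}$, where $\oX'$ is the recipe of \Cref{cons:obj} applied to the open arc system $\HR(\mfA^*)$ and the target is $\per\Lambda_{\HR(\mfA^*)}=\per\Lambda_{\mfA^*}$. It then remains to check that $\oX'(\HR(\tbeta))$ is the object $\oY(\tbeta)$. I would do this straight from the definitions: the half rotation is precisely the move converting the open picture attached to $\HR(\mfA^*)$ into the closed picture attached to $\mfA^*$, so it carries the data that \Cref{cons:obj} extracts from $\HR(\tbeta)$ relative to $\HR(\mfA^*)$ — the ordered sequence of crossed dual arcs, the crossing indices, the polygons carrying the arc segments together with their marked points, and the induced paths, through the canonical identification of quivers underlying $\Lambda_{\HR(\mfA^*)}=\Lambda_{\mfA^*}$ — onto the data that the dual of \Cref{cons:obj} extracts from $\tbeta$ relative to $\mfA^*$; under this dictionary the two twisted complexes of projective $\Lambda_{\mfA^*}$-modules coincide, whence $\oX'(\HR(\tbeta))\cong\oY(\tbeta)$. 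Combining, $\mcK_{\mfA}(\oX(\HR(\tbeta)))\cong\oY(\tbeta)$, and applying a quasi-inverse $\mcK^{\mfA}$ of $\mcK_{\mfA}$ gives $\oX(\HR(\tbeta))\cong\mcK^{\mfA}(\mcK_{\mfA}(\oX(\HR(\tbeta))))\cong\mcK^{\mfA}(\oY(\tbeta))$, which is \eqref{eq:main}; commutativity of the displayed square then follows at once.

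I expect the work to split into two parts. The first paragraph is essentially bookkeeping, provided one checks carefully that the a priori different objects $\Lambda'$, $H^{*}(\Lambda_{\mfA}^{*})$, $\Lambda_{\HR(\mfA^*)}$ and $\Lambda_{\mfA^*}$, and the functors $\mcHom_{\Lambda_{\mfA}}(\mfp\sS,-)$ and $\mcHom_{\Lambda_{\mfA}}(\oX(\mfA'),-)$, really coincide on the nose rather than merely up to isomorphism. The main obstacle is the geometric identification $\oX'(\HR(\tbeta))\cong\oY(\tbeta)$ in the second paragraph: although morally it only expresses that half rotation is the symmetry of $\mfS^{\lambda}$ interchanging the open and closed descriptions, making it precise requires a local case analysis near the boundary marked points, tracking the choice of dual arc system, the left/right position of each polygon's marked point that fixes the direction of a connecting morphism in \Cref{cons:obj}, and the grading shifts. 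This is the one place where something could genuinely go wrong, so it is where I would concentrate the effort.
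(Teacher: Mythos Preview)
Your proposal is correct and follows essentially the same route as the paper: take $\mfA'=\HR(\mfA^*)$, identify $\Phi_{\mfA'}=\mcK_{\mfA}$ by comparing \eqref{eq:Phi} with \eqref{eq:koszul}, observe $\oY=\oX'\circ\HR$, and then read \eqref{eq:main} off \eqref{eq:cong} with $\tsigma=\HR(\tbeta)$ after applying $\mcK^{\mfA}$. The paper records the identification $\oY=\oX'\circ\HR$ as a single sentence rather than carrying out the local analysis you flag, but your caution there is well placed.
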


\begin{proof}
    In \Cref{thm:independence}, if we take $\mfA'=\HR(\mfA^*)$, then $\oY =\oX'\circ \HR$. Comparing \eqref{eq:Phi} and \eqref{eq:koszul}, we have $\Phi_{\mfA'}=\mcK_{{\mfA}}$. Hence for $\tsigma=\HR(\tbeta)$ the isomorphism \eqref{eq:cong} becomes %there is a triangle equivalence
%\begin{equation}
%\Phi_{\HR(\mfA^*)}:\dfd\Lambda_{\mfA}=\per \Lambda_{\mfA} \rightarrow \per \Lambda_{\HR(\mfA^*)}=\per (\Lambda_{\mfA})^!.	
%\end{equation}
%Comparing \eqref{Independence derived equivalence}, \eqref{Koszul functor is a special HKK equivalence functor} and \eqref{composition of rest and Hom is Koszul functor},
%$$\oK_{\Lambda_{\mfA}}=\Phi_{\HR(\mfA^*)},$$
%which implies that
$\mcK_{{\mfA}}(\oX(\HR(\tbeta)))\cong \oY(\AHR(\tsigma))=\oY(\tbeta)$. Applying $\mcK^{{\mfA}}$, we get the required isomorphism.
\end{proof}

Therefore, the anti-clockwise half rotation is a geometric realization of the Koszul functor.

%\begin{lemma}\label{lem:koszul}
%There is a triangle equivalence
%$$\mcHom_{\Lambda_{\mfA}}(\mfp\sS,-): \dfd\Lambda_{\mfA} \rightarrow \per \Lambda_{\mfA}^*.$$
%\end{lemma}

%\begin{proof}
%By \Cref{thm:keller}, we only need to show that
%$$\dfd\Lambda_{\mfA}=\thick_{\mcD(\Lambda_{\mfA})}(\sS).$$

%For a graded quiver $Q$ with relation set $I$, one can always find a dg quiver $(\tQ,d)$ such that $(k\tQ,d)$ is quasi-isomorphic to $\k Q/I$ as dg algebras, see   \cite[Con. 2.6]{OP}. Since $\Lambda_{\mfA}$ is homologically smooth, the quiver $\tQ$ is finite. By the proof of \cite[Thm. 2.19]{Keller-Yang}, $\{{S_i}\}_{i \in Q_0}$ is a generator set of $\dfd\Lambda_{\mfA}$.
%\end{proof}

%By \Cref{lem:quasi} and its proof,
%$$\Lambda_{\mfA}^!=\Lambda_{\HR(\mfA^*)},$$
%and there is a dg algebra quasi-isomorphism
%$$q_{\HR(\mfA^*)}:\Lambda_{\HR(\mfA^*)}=H^*(\Lambda_{\mfA}^*) \rightarrow \Lambda_{\mfA}^*.$$
%By \Cref{thm:independence} and \Cref{lem:koszul}, there is a composition of triangle equivalences 
%\begin{equation}\label{composition of rest and Hom is Koszul functor}
%\oK_{\Lambda_{\mfA}}=q^*_{\HR(\mfA^*)}\circ \mcHom_{\Lambda_{\mfA}}(\mfp S,-):\dfd\Lambda_{\mfA} \rightarrow \per\Lambda_{\mfA}^* \rightarrow \per \Lambda_{\mfA}^!.	
%\end{equation}
%The functor $\mcK_{\Lambda_{\mfA}}$ is called the \emph{Koszul functor} for $\Lambda_{\mfA}$, and the quasi-inverse of $\mcK_{\Lambda_{\mfA}}$ is denoted by $\oK^{\Lambda_{\mfA}}$.

\begin{remark}
For a graded gentle algebra $\Lambda_{\mfA}$, a triangle equivalence between $\per\Lambda_{\mfA}$ and $\per\Lambda_{\mfA}^!$ is also established \cite[Prop. 4.4]{CJS} by using the method in \cite{HKK}.
\end{remark}

Let $\tsigma$ be a graded open arc and $\tiota$ be a graded closed arc. Since any intersection between $\tsigma$ and $\tiota$ occurs in the interior $\mfS^\circ$, by \eqref{eq:hkk1}, we have 
\begin{equation}\label{eq:symm}
\Int^{\rho}(\tsigma,\tiota)=\Int^{1-\rho}(\tiota,\tsigma),
\end{equation}
for any $\rho \in \mbZ$. This is not true for two graded open arcs in general. However, using the half rotation, we can unify these two cases by the following observation.

\begin{lemma} \label{lem:int-dim}
    Let $\tiota$ be a graded closed arc and $\tsigma$ a graded open arc. Then for any $\rho \in \mbZ$, we have
    \begin{equation}\label{eq:int transfer}
        \Int^{\rho}(\tsigma,\tiota)=\Int^{\rho}(\tsigma,\HR(\tiota)).
    \end{equation}
\end{lemma}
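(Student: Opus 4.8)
The plan is to prove \eqref{eq:int transfer} by a direct topological analysis on $\mfS^{\lambda}$, realizing the half rotation as a local modification near $\partial\mfS$. Since $\Int^{\rho}$ depends only on homotopy classes, I would first fix representatives of $\tsigma$ and $\tiota$ in minimal position; as $\tsigma$ is open and $\tiota$ is closed, all of their oriented intersections lie in $\mfS^\circ$. Writing $Y\in\mfY$ for one endpoint of $\tiota$ and $M\in\mfM$ for the open marked point immediately anti-clockwise of $Y$, I would realize $\HR$ one endpoint at a time: replace $\tiota$ near $Y$ by a thin \emph{boundary-hugging tail} running anti-clockwise along $\partial\mfS$ from $Y$ to $M$, obtaining a representative $\tiota^{+}$ of the arc half-rotated only at $Y$, and then repeat at the other endpoint. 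It then suffices to prove $\Int^{\rho}(\tsigma,\tiota)=\Int^{\rho}(\tsigma,\tiota^{+})$ for this single-endpoint move and to iterate it over the two endpoints of $\tiota$.

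For the single-endpoint move: the tail can be taken inside an arbitrarily thin collar of the boundary segment $[Y,M]$, and the only point of $\mfM$ meeting that segment is $M$ itself, so away from this collar $\tiota^{+}$ coincides with $\tiota$ as a graded arc. Hence every oriented intersection from $\tsigma$ to $\tiota$ persists as one from $\tsigma$ to $\tiota^{+}$ with the same index, the index being computed locally at the crossing and $\HR$ inheriting the grading of $\tiota$. If $M$ is not an endpoint of $\tsigma$, the tail is disjoint from $\tsigma$, $\tiota^{+}$ is already in minimal position with $\tsigma$, and this correspondence is a bijection onto $\overrightarrow{\cap}^{\rho}(\tsigma,\tiota^{+})$. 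If $M$ is an endpoint of $\tsigma$, then $\tsigma$ and $\tiota^{+}$ share $M$; taking the tail to hug $[Y,M]$ all the way into $M$, the arc $\tiota^{+}$ lies near $M$ strictly between $\tsigma$ and the boundary arc $[Y,M]$. Since $M$ is the anti-clockwise neighbour of $Y$, this boundary arc lies on the clockwise side of $M$, so $\tiota^{+}$ is ``more counter-clockwise'' at $M$ than $\tsigma$; consequently the only small clockwise arc at $M$ between the two runs from $\tiota^{+}$ to $\tsigma$, giving an oriented intersection from $\tiota^{+}$ to $\tsigma$ at $M$ (not counted in $\overrightarrow{\cap}^{\rho}(\tsigma,\tiota^{+})$) and none from $\tsigma$ to $\tiota^{+}$. (It is here that the statement genuinely uses $\HR$ and not $\AHR$: for $\AHR$ the angular inequality at the relevant marked point reverses and the new boundary intersection would be counted.) The remaining subtlety is that $\tiota^{+}$ need not be minimal with $\tsigma$: attaching the hugging tail produces a removable \emph{half-bigon} with $\tsigma$ exactly when some $c\in\overrightarrow{\cap}(\tsigma,\tiota)$ cuts off an empty triangle bounded by a subarc of $\tsigma$, a subarc of $\tiota$ and the boundary arc $[Y,M]$ (at most one such $c$ for $[Y,M]$). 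Removing this half-bigon deletes $c$ but, by the same move, pushes $\tiota^{+}$ across $\tsigma$ at $M$, so that a counted boundary intersection from $\tsigma$ to $\tiota^{+}$ appears at $M$; matching $c$ with this boundary intersection repairs the bijection.

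The main obstacle is verifying that this ``trade'' preserves the index, i.e. that the index of the new oriented intersection from $\tsigma$ to $\tiota^{+}$ at $M$ equals $\ind_{c}(\tsigma,\tiota)$. This is precisely the bookkeeping behind the proof of \Cref{lem:deg}: the triangle cut off by $c$ and $[Y,M]$ is an empty polygon, so the grading along its boundary is homotopic to a constant, and feeding this into \Cref{def:int} together with the relations \eqref{eq:hkk1} identifies the two indices. (A tempting shortcut — realizing $\HR$ by a surface diffeomorphism isotopic to the identity — only reduces \eqref{eq:int transfer} to the equivalent statement of half-rotating $\tsigma$ instead, so it does not bypass this analysis.) Once the index identification is in hand, the single-endpoint move yields $\overrightarrow{\cap}^{\rho}(\tsigma,\tiota)\cong\overrightarrow{\cap}^{\rho}(\tsigma,\tiota^{+})$ for every $\rho$, and iterating over the two endpoints of $\tiota$ gives \eqref{eq:int transfer}.
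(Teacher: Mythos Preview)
The paper states \Cref{lem:int-dim} as a direct observation and gives no proof. Your topological argument—realizing $\HR$ as a boundary-hugging tail, noting that interior crossings persist with unchanged index, and handling the single possible half-bigon trade-off via an index comparison across the contractible triangle—is correct and supplies exactly the details the paper omits.

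One small terminological wrinkle: when you write that the boundary arc $[Y,M]$ ``lies on the clockwise side of $M$, so $\tiota^{+}$ is `more counter-clockwise' at $M$ than $\tsigma$'', you are mixing two senses of ``clockwise'' (direction along $\partial\mfS$ versus rotational position in $\mfS$), which reads as contradictory. The conclusion is nonetheless right: the tail approaches $M$ from the direction of $Y$, so at $M$ the arc $\tiota^{+}$ sits at the extreme of the angular order and the unique small clockwise arc between the two runs from $\tiota^{+}$ to $\tsigma$; hence no oriented intersection from $\tsigma$ to $\tiota^{+}$ is created at $M$, as you claim. Your parenthetical remark that the analogous statement for $\AHR$ would fail for this reason is also correct and worth keeping.
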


We give the following intersection-dimension formula involving the Koszul functor.

\begin{theorem}\label{thm:int=dim}
Let $\tsigma$ be a graded open arc and $\tiota$ be a graded closed arc. Then for any $\rho \in \mbZ$, we have
\begin{equation}\label{eq:intdim}
\Int^{\rho}(\tsigma,\tiota)=\operatorname{dim}\Hom_{\per\Lambda_{\mfA}}(\oX(\tsigma),\mcK^{{\mfA}}(\oY(\tbeta)){[\rho]}).
\end{equation}
\end{theorem}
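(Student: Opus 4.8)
The plan is to reduce \eqref{eq:intdim} to the known intersection-dimension formula \eqref{eq:int=dim} for two graded open arcs, by transporting everything through the half rotation. First I would invoke \Cref{thm:koszul}, which gives the isomorphism $\mcK^{\mfA}(\oY(\tiota))\cong\oX(\HR(\tiota))$ in $\per\Lambda_{\mfA}$. Substituting this into the right-hand side of \eqref{eq:intdim} turns the claim into
\[
\Int^{\rho}(\tsigma,\tiota)=\dim\Hom_{\per\Lambda_{\mfA}}\bigl(\oX(\tsigma),\oX(\HR(\tiota))[\rho]\bigr).
\]
Now $\tsigma$ and $\HR(\tiota)$ are both graded \emph{open} arcs, so \eqref{eq:int=dim} from \Cref{thm:QZZ} applies and gives $\dim\Hom_{\per\Lambda_{\mfA}}(\oX(\tsigma),\oX(\HR(\tiota))[\rho])=\Int^{\rho}(\tsigma,\HR(\tiota))$. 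So the theorem follows once we know $\Int^{\rho}(\tsigma,\tiota)=\Int^{\rho}(\tsigma,\HR(\tiota))$, which is exactly the content of \Cref{lem:int-dim}.

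So the real work is concentrated in \Cref{lem:int-dim}, and that is the step I expect to be the main obstacle; the rest is bookkeeping. To prove \eqref{eq:int transfer} I would argue that the half rotation induces a bijection between oriented intersections from $\tsigma$ to $\tiota$ and oriented intersections from $\tsigma$ to $\HR(\tiota)$ preserving the index. Since $\tiota$ is a closed arc and $\tsigma$ an open arc, every intersection of $\tsigma$ and $\tiota$ lies in the interior $\mfS^{\circ}$; the half rotation $\HR(\tiota)$ is homotopic to a small pushoff of $\tiota$ whose endpoints slide to the neighbouring open marked points, and this isotopy can be performed so that no interior crossing with $\tsigma$ is created or destroyed. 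Thus there is a natural bijection $\overrightarrow{\cap}(\tsigma,\tiota)\to\overrightarrow{\cap}(\tsigma,\HR(\tiota))$. It remains to check that this bijection preserves the index $\ind_{\Po}$: at an interior intersection point the index depends only on the gradings $\tsigma(t_1)$ and $\tiota(t_2)$ together with the local tangent directions, and since $\HR(\tiota)$ has the same grading as $\tiota$ by definition and is homotopic to $\tiota$ away from the boundary, the local picture at each corresponding intersection is unchanged, so the index is preserved. This gives $\Int^{\rho}(\tsigma,\tiota)=\Int^{\rho}(\tsigma,\HR(\tiota))$ for every $\rho$.

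Putting these together: combine \Cref{thm:koszul}, \eqref{eq:int=dim} applied to the pair $(\tsigma,\HR(\tiota))$, and \Cref{lem:int-dim}, to obtain
\[
\Int^{\rho}(\tsigma,\tiota)=\Int^{\rho}(\tsigma,\HR(\tiota))=\dim\Hom_{\per\Lambda_{\mfA}}\bigl(\oX(\tsigma),\oX(\HR(\tiota))[\rho]\bigr)=\dim\Hom_{\per\Lambda_{\mfA}}\bigl(\oX(\tsigma),\mcK^{\mfA}(\oY(\tiota))[\rho]\bigr),
\]
which is \eqref{eq:intdim}. One point of care: the statement of \Cref{thm:int=dim} as printed writes $\oY(\tbeta)$ on the right-hand side while the left-hand side uses $\tiota$; I would read $\tbeta=\tiota$ (a harmless notational clash, as in \Cref{thm:koszul} where the closed arc is called $\tbeta$), so no separate argument is needed there. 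The only genuinely geometric input is the isotopy-invariance argument for \Cref{lem:int-dim}, and making the "no crossing is created or destroyed" claim precise — by choosing representatives in minimal position and tracking the boundary isotopy carefully — is where the care must go.
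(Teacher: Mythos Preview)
Your proposal is correct and follows essentially the same approach as the paper: combine \Cref{thm:koszul} (i.e., \eqref{eq:main}), the intersection-dimension formula \eqref{eq:int=dim} for the open arcs $\tsigma$ and $\HR(\tiota)$, and \Cref{lem:int-dim} (i.e., \eqref{eq:int transfer}). You even go a bit further than the paper by sketching why \Cref{lem:int-dim} holds, whereas the paper simply states it as an observation; your reading of $\tbeta=\tiota$ is also correct.
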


\begin{proof}
%By \Cref{lem:int-dim}, we have $\Int^{\rho}(\tsigma,\tiota)=\Int^{\rho}(\tsigma,\HR(\tiota))$. 
By \eqref{eq:int=dim}, we have
$$\Int^{\rho}(\tsigma,\HR(\tiota))=\operatorname{dim}\Hom_{\per \Lambda_{\mfA}}(\oX(\tsigma),\oX(\HF(\tiota)){[\rho]}).$$
Hence the required formula follows from \eqref{eq:int transfer} and \eqref{eq:main}.
\end{proof}

Dually, we have a Koszul functor
$$\mcK_{\mfA^*}:\per\Lambda_{\mfA^*}\to\per\Lambda_{(\mfA^*)^*}$$
with an intersection-dimension formula
\begin{equation}\label{eq:intdim2}
\Int^{\rho}(\tiota,\tsigma)=\operatorname{dim}\Hom_{\per\Lambda_{\mfA^*}}(\oY(\tbeta),\mcK^{{\mfA^*}}(\oZ(\tsigma)){[\rho]}),
\end{equation}
where $\oZ$ is the injective map in \Cref{cons:obj} with respect to the open full formal arc system $(\mfA^*)^*$. Note that $(\mfA^*)^*=\mfA[1]$ and one can identify $\per\Lambda_{\mfA[1]}$ with $\per\Lambda_{\mfA}$ in a natural way that $\oX=\oZ$. Combining \eqref{eq:symm}, \eqref{eq:intdim} and \eqref{eq:intdim2}, we get the following consequence.

\begin{corollary}
    Let $\tsigma$ be a graded open arc and $\tiota$ be a graded closed arc. Then there is an isomorphism of vector spaces
    $$\Hom_{\per\Lambda_{\mfA}}(\oX(\tsigma),\mcK^{{\mfA}}(\oY(\tbeta)))\cong\Hom_{\per\Lambda_{\mfA^*}}(\oY(\tbeta),\mcK^{{\mfA^*}}(\oX(\tsigma)){[1]}).$$
\end{corollary}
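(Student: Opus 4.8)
The plan is to deduce the stated isomorphism purely from the three intersection-dimension identities \eqref{eq:symm}, \eqref{eq:intdim} and \eqref{eq:intdim2}, evaluated at appropriately chosen degrees, together with the elementary fact that two finite-dimensional $\k$-vector spaces are isomorphic precisely when they have the same dimension. So the argument will be a short bookkeeping chain of equalities of numbers, followed by an abstract comparison of dimensions; no new geometric input is needed since all the ingredients have already been established in this section.

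First I would note that every $\Hom$-space occurring in the statement is finite-dimensional: $\Lambda_{\mfA}$ and $\Lambda_{\mfA^*}$ are finite-dimensional graded gentle algebras, so $\per\Lambda_{\mfA}$ and $\per\Lambda_{\mfA^*}$ are $\Hom$-finite, and the Koszul functors $\mcK^{\mfA}$ and $\mcK^{\mfA^*}$ are triangle equivalences, hence carry objects with finite-dimensional $\Hom$-spaces to such objects. Next I would apply \eqref{eq:intdim} with $\rho=0$ to obtain
$$\dim\Hom_{\per\Lambda_{\mfA}}(\oX(\tsigma),\mcK^{\mfA}(\oY(\tiota)))=\Int^{0}(\tsigma,\tiota),$$
then \eqref{eq:symm} with $\rho=0$ to rewrite $\Int^{0}(\tsigma,\tiota)=\Int^{1}(\tiota,\tsigma)$, and finally \eqref{eq:intdim2} with $\rho=1$ to obtain
$$\Int^{1}(\tiota,\tsigma)=\dim\Hom_{\per\Lambda_{\mfA^*}}(\oY(\tiota),\mcK^{\mfA^*}(\oZ(\tsigma))[1]).$$
Here I would invoke the identification $(\mfA^*)^*=\mfA[1]$, under which $\per\Lambda_{\mfA[1]}\simeq\per\Lambda_{\mfA}$ and $\oZ=\oX$, as recorded in the paragraph preceding the corollary, to replace $\oZ(\tsigma)$ by $\oX(\tsigma)$. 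Chaining the three displays shows that the two $\Hom$-spaces in the statement have the same finite dimension, hence are isomorphic as $\k$-vector spaces.

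The only point that requires a little care — and the closest thing to an obstacle — is making the identification $(\mfA^*)^*=\mfA[1]$ and the resulting equality $\oZ=\oX$ fully precise, including the shift conventions, so that \eqref{eq:intdim2} genuinely applies to the arcs $\tsigma$ and $\tiota$ themselves rather than to some shift of them; once this is checked the proof is immediate. I would not expect the isomorphism to be canonical: it is simply an equality of dimensions, and I would present it as such, without attempting to construct a natural map between the two $\Hom$-spaces.
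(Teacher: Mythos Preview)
Your proposal is correct and follows exactly the same approach as the paper: the paper simply states that the corollary follows by combining \eqref{eq:symm}, \eqref{eq:intdim} and \eqref{eq:intdim2} together with the identification $\oZ=\oX$, which is precisely the chain of equalities you wrote out. Your additional remarks about $\Hom$-finiteness and the non-canonical nature of the isomorphism are sound and make explicit what the paper leaves implicit.
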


\end{document}